\newtheorem{definition}{Definition}[section]
\newtheorem{theorem}[definition]{Theorem}
\newtheorem{corollary}[definition]{Corollary}
\newtheorem{lemma}[definition]{Lemma}
\newtheorem{remark}[definition]{Remark}
\newtheorem{example}[definition]{Example}
\newcommand{\defas}{:=}
\newcommand{\ind}{\chi}
\newcommand{\nlOp}{\mathcal{L}}
\newcommand{\nlDom}{\Omega}
\newcommand{\nlBound}{\mathcal{I}}
\newcommand{\completeDom}{\nlDom \cup \nlBound}
\newcommand{\varOp}{A}
\newcommand{\varForce}{F}
\newcommand{\xb}{\mathbf{x}}
\newcommand{\yb}{\mathbf{y}}
\newcommand{\Vb}{\textbf{V}}
\newcommand{\Wb}{\textbf{W}}
\newcommand{\Nbb}{\mathbb{N}}
\newcommand{\kernel}{\gamma}
\newcommand{\kernelt}{\gamma^t}
\newcommand{\kernelij}{\gamma_{ij}}
\newcommand{\kerneliI}{\gamma_{iI}}
\newcommand{\advar}{v}
\newcommand{\trialSpace}{V}
\newcommand{\testSpace}{V_c}
\newcommand{\weakSol}{u}
\newcommand{\shape}{\Gamma}
\newcommand{\initialShape}{\Gamma_0}
\newcommand{\shapet}{\shape_t}
\newcommand{\shapetW}{\shapet^{\Wb}}
\newcommand{\shapeSpace}{\mathcal{A}}
\newcommand{\R}{\mathbb{R}}
\newcommand{\Rd}{\mathbb{R}^d}
\newcommand{\Fb}{\mathbf{F}}
\newcommand{\Fbt}{\Fb_{t}}
\newcommand{\FbtW}{\Fb_{t}^{\Wb}}
\newcommand{\FbtV}{\Fb_{t}^{\Vb}}
\newcommand{\FbV}{\Fb^{\Vb}}
\newcommand{\Ftzero}{\mathbf{F}_{0}}
\newcommand{\vecfields}{C_0^k(\completeDom,\R^d)} 
\newcommand{\vecfieldsspecific}{C_0^2(\completeDom,\R^d)}
\newcommand{\data}{\bar{\weakSol}}
\newcommand{\intOp}{\varOp_{\shape}}
\newcommand{\intOptW}{\varOp_{\shapetW}}
\newcommand{\intForce}{\varForce_{\shape}}
\newcommand{\intForcetW}{\varForce_{\shapetW}}
\newcommand{\secondVarForce}{\widetilde{\varForce}}
\newcommand{\secondIntForcetW}{\secondVarForce_{\shapetW}}
\newcommand{\intKernel}{\kernel_{\shape}}
\newcommand{\objFun}{J}
\newcommand{\redFun}{J^{red}}
\newcommand{\lagrangian}{\mathfrak{L}}
\newcommand{\reallagrangian}{\mathfrak{G}}
\newcommand{\Tb}{\mathbf{T}}
\DeclareMathOperator{\Id}{{\bf Id}}
\DeclareMathOperator{\grad}{\nabla}
\DeclareMathOperator{\di}{div}
\DeclareMathOperator{\hess}{Hess}
\DeclareMathOperator{\supp}{supp}
\DeclareMathOperator{\trace}{tr}
\newcommand{\xt}{\xi^t}
\newcommand{\objFunDer}{\mathfrak{\objFun}}
\newcommand{\objFunDertW}{\mathfrak{\objFun}_{\shapetW}}
\newcommand{\varForceDer}{\mathfrak{\varForce}}
\newcommand{\varForceDertW}{\mathfrak{\varForce}_{\shapetW}}
\newcommand{\varOpDer}{\mathfrak{\varOp}}
\newcommand{\varOpDertW}{\mathfrak{\varOp}_{\shapetW}}
\newcommand{\Vbt}{\Vb^t}
\newcommand{\reg}{j_{reg}}
\newcommand{\perimeter}{j_{per}}
\newcommand{\normal}{n}
\newcommand{\holdAll}{D}
\title{\textbf{Second Order Shape Optimization for an Interface Identification Problem constrained by Nonlocal Models}}
\author{Matthias Schuster\thanks{Universitaet Trier, D-54286 Trier, Germany; Email: schusterm@uni-trier.de,  volker.schulz@uni-trier.de}\hspace{2mm}\href{https://orcid.org/0000-0002-9355-1076}{\includegraphics[scale=0.06]{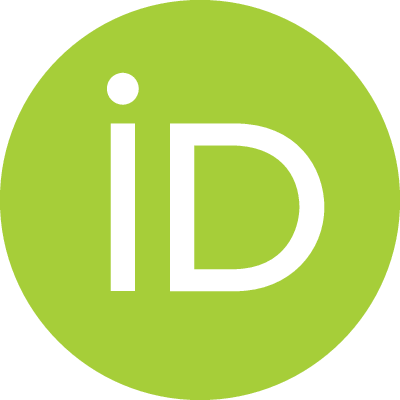}} \and Volker Schulz\footnotemark[1]\hspace{2mm}\href{https://orcid.org/0000-0001-7665-130X}{\includegraphics[scale=0.06]{orcid.eps}} }
\date{}
\begin{document}
	\maketitle
	\small
	\textbf{Abstract.}
	Since shape optimization methods have been proven useful for identifying interfaces in models governed by partial differential equations, we show how shape optimization techniques can also be applied to an interface identification problem constrained by a nonlocal Dirichlet problem. Here, we focus on deriving the second shape derivative of the corresponding reduced functional and we further investigate a second order optimization algorithm.
	~\\
	\textbf{Keywords.} shape optimization, second shape derivative, nonlocal convection-diffusion.
	\normalsize
	
	\section{Introduction}
	Nonlocal operators are typically integral operators and therefore allow interactions between two distinct points in space. Moreover, the solution of a nonlocal problem in general needs to fulfill less regularity requirements compared to the solution of a system of partial differential equations. Thus, some physical phenomena like fracture propagation in continuum mechanics\cite{silling2000,javili2019peridynamics} or anomalous diffusion\cite{suzuki2022,brockmann2008} effects can be modeled more accurately by nonlocal equations compared to their 'classic' description by partial differential equations. Moreover, nonlocal models have been successfully applied to image denoising\cite{nldenoising1,delia_bilevel_image_denoising}, neural networks\cite{NN1,NN2} or stochastic processes\cite{metzler2000random,delia2017nonlocal}, just to name a few.\\
	Shape optimization techniques are mostly developed in the context of PDE-constrained optimization problems and have been investigated in areas like aerodynamics\cite{schmidt_diss, aerodynamics1}, acoustics\cite{acoustics1,acoustics2,acoustics3}, fluid dynamics\cite{fluid_dynamics, Luka_diss} and many more.
	In \cite{shape_paper}, we already formulated the first shape derivative of an interface identification problem that is governed by a nonlocal Dirichlet problem and we additionally investigated a first order shape optimization algorithm. In this work, we continue by deriving a second shape derivative, which is then used in a Newton-like second order shape optimization approach.\\
	The first order shape derivative method of \cite{shape_paper} was also employed for interface identification constrained by an energy-based Local-to-Nonlocal coupling in \cite{LtN_paper}. Moreover, in the context of Peridynamics, a machine learning approach to detect an interface was tested in \cite{nader_ml}. Here, Nader et. al. made use of finite differences to solve a one-dimensional variable horizon Local-to-Nonlocal coupling, and a convolutional neural network was trained to assign domain affiliations to nodes.\\
	In the next chapter, we start by introducing nonlocal Dirichlet problems and shortly discuss the well-posedness of the corresponding weak formulation. In Chapter \ref{chap:int_id} we define an interface identification problem that is constrained by a nonlocal Dirichlet problem. Then, we present necessary basics of shape optimization in Chapter \ref{chap:basics_shape_opt} followed by a description of the averaged adjoint method(AAM) in Section \ref{chap:AAM_second_der}, which we will use to compute the second shape derivative of the reduced functional corresponding to the constrained interface identification problem. After that, we show in  Chapter \ref{chap:AAM_proof}, that, under natural assumptions, the prerequisites of AAM are satisfied. Lastly, we investigate in Section \ref{chap:num_exp} a second order optimization approach, that is partly a consequence of AAM. 
	\section{Nonlocal Terminology and Framework}
	In this work, we consider the \emph{nonlocal convection-diffusion operator}
	\begin{align}\label{def:nlOp}
		-\nlOp \weakSol(\xb) \defas \int_{\R^d} \weakSol(\xb)\kernel(\xb,\yb) - \weakSol(\yb)\kernel(\yb,\xb) ~d\yb d\xb.
	\end{align}
	Here, the nonnegative function $\kernel:\Rd \times \Rd \rightarrow [0,\infty)$ is the so-called \emph{kernel}, which plays a vital role in the nonlocal theory. Moreover, problems involving the nonlocal operator \eqref{def:nlOp} have been extensively studied in the literature (see, e.g., \cite{DuAnalysis,vollmann_diss}). We define a \emph{nonlocal steady-state nonhomogeneous Dirichlet problem} as
	\begin{align}
		\begin{split}\label{def:nlDirProb}
			\textit{Find a function } &\weakSol \textit{ s.t.}\\
			-\nlOp \weakSol &= f \quad  \text{on } \nlDom \\
			\weakSol &= g \quad \text{on } \nlBound,
		\end{split}
	\end{align}
	where we set the so-called \emph{nonlocal boundary} (or \emph{nonlocal interaction domain}) $\nlBound$ as
	\begin{align*}
		\nlBound \defas \{\yb \in \Rd \setminus \nlDom: \int_{\nlDom} \kernel(\xb,\yb) + \kernel(\yb,\xb) ~d\xb > 0 \},
	\end{align*}
which contains all points outside of the domain $\nlDom$, that interact with at least one point inside of $\nlDom$ through the kernel $\kernel$.
In this section, we will recall the basic theory on problems of type \eqref{def:nlDirProb}. We start by deriving a weak formulation of problem \eqref{def:nlDirProb}. Given appropriate spaces $\trialSpace(\completeDom)$ and $\testSpace(\completeDom)$, that are defined later in this section, we multiply the first equation of \eqref{def:nlDirProb}, where we assume $\weakSol \in \trialSpace(\completeDom)$, with a test vector $\advar \in \testSpace(\completeDom)$ and then integrate over $\nlDom$, which yields
	\begin{align*}
		\int\limits_{\nlDom} \int\limits_{\completeDom} \advar(\xb)\left(\weakSol(\xb)\kernel(\xb,\yb) - \weakSol(\yb)\kernel(\yb,\xb) \right) ~d\yb d\xb = \int\limits_{\nlDom} f(\xb) \advar(\xb) ~d\xb. 
	\end{align*}
Here, we can rewrite the left-hand side using $\advar=0$ on $\nlBound$ and applying Fubini's theorem as follows
\begin{align*}
	&\int\limits_{\nlDom} \int\limits_{\completeDom} \advar(\xb)\left(\weakSol(\xb)\kernel(\xb,\yb) - \weakSol(\yb)\kernel(\yb,\xb) \right) ~d\yb d\xb\\
	&= \frac{1}{2} \int\limits_{\completeDom} \int\limits_{\completeDom} \left(\advar(\xb) - \advar(\yb)\right)\left(\weakSol(\xb)\kernel(\xb,\yb) - \weakSol(\yb)\kernel(\yb,\xb) \right) ~d\yb d\xb. 
\end{align*}
This reformulation is numerically more stable for singular symmetric kernels, which will be introduced later in this chapter.
Now, before we present the definition of a weak solution to \eqref{def:nlDirProb}, we define the bilinear operator
\begin{align*}
	\varOp(\weakSol,\advar) \defas &= \frac{1}{2}  \iint\limits_{(\completeDom)^2} \left(\advar(\xb) - \advar(\yb)\right)\left(\weakSol(\xb)\kernel(\xb,\yb) - \weakSol(\yb)\kernel(\yb,\xb) \right) ~d\yb d\xb
\end{align*}
and the linear functional
\begin{align*}
	\varForce(\advar) \defas \int\limits_{\nlDom} f(\xb) \advar(\xb) ~d\xb.
\end{align*}
Then, we set the (semi-)norm $|||\cdot|||$ and the norm $||\cdot||_{\trialSpace(\completeDom)}$ as
\begin{align*}
	|||\weakSol||| \defas \sqrt{\varOp(\weakSol,\weakSol)} \text{ and }
	||\weakSol||_{\trialSpace(\completeDom)} \defas |||\weakSol||| + ||\weakSol||_{L^2(\completeDom)}.
\end{align*}
Additionally, we define the nonlocal energy space $\trialSpace(\completeDom)$ and the nonlocal volume-constrained energy space $\testSpace(\completeDom)$ in the following way: 
\begin{align*}
	\trialSpace(\completeDom) \defas \{\weakSol \in L^2(\completeDom): |||\weakSol||| < \infty \} \text{ and }
 	\testSpace(\completeDom) \defas \{\weakSol \in \trialSpace(\completeDom): \weakSol = 0 \text{ on } \nlBound \}.
 \end{align*}
\begin{definition}
	Given $f \in L^2(\nlDom)$ and $g \in \trialSpace(\completeDom)$, if $\widetilde{\weakSol} \in \testSpace(\completeDom)$ satisfies
		\begin{align*}
			\varOp(\widetilde{\weakSol},\advar) = &\varForce(\advar) - \varOp(g,\advar)  \textit{ for all } \advar \in \testSpace(\completeDom),
	\end{align*}
	then we call $\weakSol \defas \widetilde{\weakSol} + g \in \trialSpace(\completeDom)$ weak solution of the nonlocal Dirichlet problem \eqref{def:nlDirProb}.
\end{definition}~\\
For $\delta >0$ we set $B_{\delta}(\xb) \defas \{\yb \in \Rd: ||\xb-\yb||_2 < \delta\}.$
Then, in this work every kernel $\kernel$ is assumed to fulfill:
\begin{itemize}
	\item [(K1)] There exists a constant $\delta>0$ and a function $\varphi:\Rd \times \Rd \rightarrow [0,\infty)$ such that 
	\begin{align*}
		\kernel(\xb,\yb) = \varphi(\xb,\yb)\ind_{B_{\delta}(\xb)}(\yb).
	\end{align*}
	\item[(K2)] There exist constants $\epsilon \in (0,\delta)$ and $\underline{\kernel} > 0$ with
	\begin{align*}
		\kernel(\xb,\yb) \geq \underline{\kernel} \text{ for } \xb \in \nlDom \text{ and } \yb \in B_{\epsilon}(\xb).
	\end{align*}
\end{itemize}
Further, we consider two types of kernel classes:
\begin{itemize}
	\item \emph{Integrable kernels}:\\
	Set $\kernel^s(\xb,\yb) \defas \frac{1}{2}\left(\kernel(\xb,\yb) + \kernel(\yb,\xb) \right)$ and $\kernel^a(\xb,\yb) \defas \frac{1}{2}\left(\kernel(\xb,\yb) - \kernel(\yb,\xb)\right)$. Then, $\kernel$ is called \emph{integrable}, if the following requirements are fulfilled:
	\begin{enumerate}[(1)]
		\item There exist constants $\underline{\kernel}^s, \overline{\kernel}^s > 0$ such that
		\begin{align*}
			\underline{\kernel}^s = \inf_{\xb \in \nlDom} \int_{\completeDom} \kernel^s(\xb,\yb) ~d\yb < \infty \text{ and } \sup_{\xb \in \nlDom} \int_{\completeDom} (\kernel^s(\xb,\yb))^2 ~d\yb = (\overline{\kernel}^s)^2.
		\end{align*}
		\item There exists constants $\underline{\kernel}^a \in \R$ and $\overline{\kernel}^a > 0$ with
		\begin{align*}
			\underline{\kernel}^a = \inf_{\xb \in \nlDom} \int_{\completeDom} \kernel^a(\xb,\yb) ~d\yb \text{ and } \sup_{\xb \in \nlDom} \int_{\completeDom} |\kernel^a(\xb,\yb)| ~d\yb = \overline{\kernel}^a.
		\end{align*}
		\item The lower boundaries $\underline{\kernel}^s$ and $\underline{\kernel}^a$ satisfy 
		$$\underline{\kernel}^s + \underline{\kernel}^a > 0.$$
	\end{enumerate}
	\item \emph{Singular symmetric kernels}:\\
	There exist positive constants $0 < \kernel_* \leq \kernel^* < \infty$ and $s \in (0,1)$ such that
	\begin{align}
		\label{ineq:sing_sym_kernel}
		\kernel_* \leq \kernel(\xb,\yb)||\xb-\yb||_2^{d+2s} \leq \kernel^* \text{ for all } \xb,\yb \in \completeDom.
	\end{align}
	Furthermore, the kernel $\kernel$ is symmetric.
\end{itemize}
In \cite{DuAnalysis, vollmann_diss} it is shown, that there is a unique weak solution to the Dirichlet problem \eqref{def:nlDirProb}, if the corresponding kernel fulfills (K1) and (K2) and additionally fits into one of those two categories. 
\begin{remark}\label{remark:norm_equiv}
	In the volume-constrained space $\testSpace(\completeDom)$ the norms $||\cdot||_{L^2(\completeDom)}$ and $|||\cdot|||$ are equivalent, if we have chosen an integrable kernel. Thus, it holds that
	\begin{align*}
		\weakSol \in (\testSpace(\completeDom),|||\cdot|||) \Leftrightarrow \weakSol \in (L_c^2(\completeDom),||\cdot||_{L^2(\completeDom)}),
	\end{align*}
where $L^2_c(\completeDom)\defas \{\weakSol \in L^2(\completeDom): \weakSol = 0 \text{ on } \nlBound\}$.\\
On the other hand, if $\kernel$ fulfills the requirements of a singular symmetric kernel, we get norm equivalence between $|||\cdot|||$ and $|\cdot|_{H^s(\completeDom)}$ on $\testSpace(\completeDom)$ and
\begin{align*}
	\weakSol \in (\testSpace(\completeDom),|||\cdot|||) \Leftrightarrow \weakSol \in (H^s_c(\completeDom), |\cdot|_{H^s(\completeDom)}),
\end{align*}
where the norm $|\cdot|_{H^s(\completeDom)}$ as well as the volume-constrained fractional Sobolev space $H^s_c(\completeDom)$ are defined as
\begin{align*}
	|\weakSol|_{H^s(\completeDom)}^2 \defas \iint_{(\completeDom)^2} \frac{(\weakSol(\xb) - \weakSol(\yb))^2}{||\xb - \yb||_2^{d+2s}} ~d\yb d\xb \text{ and }
	H_c^s(\completeDom) \defas \{\weakSol \in L_c^2(\completeDom): |\weakSol|_{H^s(\completeDom)}  < \infty \}.
\end{align*}
For the proof of these assertions, we again refer to \cite{DuAnalysis,vollmann_diss}.
\end{remark}
\section{Interface Identification constrained by Nonlocal Models}
\label{chap:int_id}
PDE-constrained interface identification is a well-researched shape optimization problem (see, e.g., \cite{welker_diss, shapes_geometries}) and in the following we will investigate interface identification governed by nonlocal models.\\
Here as illustrated in Figure \ref{fig:interface}, we assume that $\nlDom$ is decomposed in two open and nonempty sets $\nlDom_1, \nlDom_2 \subset \Rd$, such that $\nlDom = \nlDom_1 \dot{\cup} \shape \dot{\cup} \nlDom_2$ holds, where $\shape \defas \partial \nlDom_1 \cap \partial \nlDom_2$ is called \emph{interface}. 
\begin{figure}[h!]
	\centering
	\def\svgwidth{0.3\textwidth}
	{
\begingroup%
  \makeatletter%
  \providecommand\color[2][]{%
    \errmessage{(Inkscape) Color is used for the text in Inkscape, but the package 'color.sty' is not loaded}%
    \renewcommand\color[2][]{}%
  }%
  \providecommand\transparent[1]{%
    \errmessage{(Inkscape) Transparency is used (non-zero) for the text in Inkscape, but the package 'transparent.sty' is not loaded}%
    \renewcommand\transparent[1]{}%
  }%
  \providecommand\rotatebox[2]{#2}%
  \newcommand*\fsize{\dimexpr\f@size pt\relax}%
  \newcommand*\lineheight[1]{\fontsize{\fsize}{#1\fsize}\selectfont}%
  \ifx\svgwidth\undefined%
    \setlength{\unitlength}{358.27765652bp}%
    \ifx\svgscale\undefined%
      \relax%
    \else%
      \setlength{\unitlength}{\unitlength * \real{\svgscale}}%
    \fi%
  \else%
    \setlength{\unitlength}{\svgwidth}%
  \fi%
  \global\let\svgwidth\undefined%
  \global\let\svgscale\undefined%
  \makeatother%
  \begin{picture}(1,1.08425571)%
    \lineheight{1}%
    \setlength\tabcolsep{0pt}%
    \put(0,0){\includegraphics[width=\unitlength,page=1]{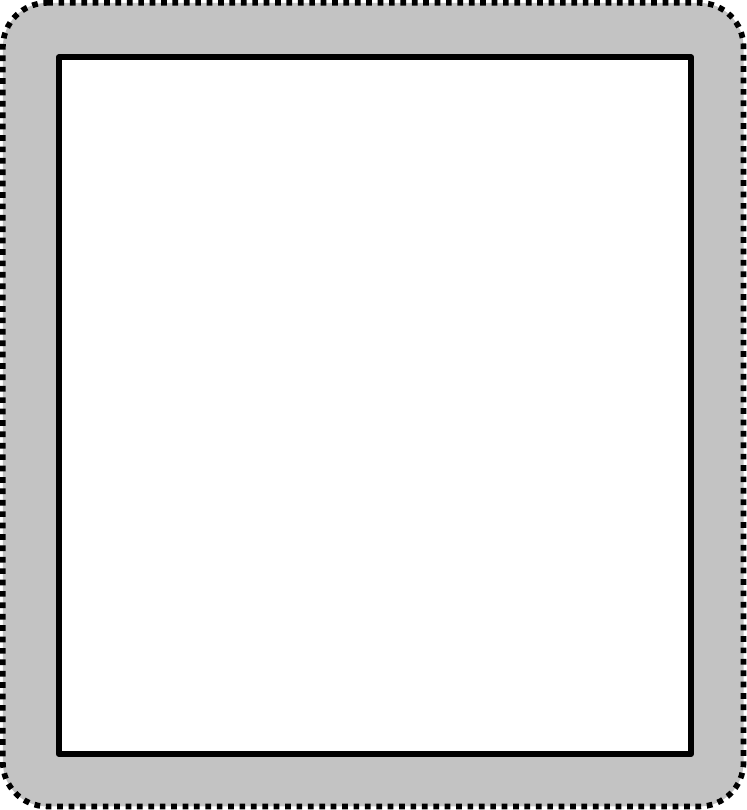}}%
    \put(0.47347597,0.55462095){\makebox(0,0)[lt]{\lineheight{1.25}\smash{\begin{tabular}[t]{l}$\nlDom_1$\end{tabular}}}}%
    \put(0.4719123,1.015){\makebox(0,0)[lt]{\lineheight{1.25}\smash{\begin{tabular}[t]{l}{\small $\nlBound$}\end{tabular}}}}%
    \put(0,0){\includegraphics[width=\unitlength,page=2]{interface_problem.pdf}}%
    \put(0.69035752,0.8854087){\makebox(0,0)[lt]{\lineheight{1.25}\smash{\begin{tabular}[t]{l}$\nlDom_2$\end{tabular}}}}%
    \put(0.62014405,0.28624897){\makebox(0,0)[lt]{\lineheight{1.25}\smash{\begin{tabular}[t]{l}{\color{blue} $\shape$}\end{tabular}}}}%
  \end{picture}%
\endgroup%
}
	\caption{The interface $\shape$, which is colored in blue, decomposes the white area $\nlDom$ into subdomains $\nlDom_1$ and $\nlDom_2$. Further, note that the nonlocal boundary $\nlBound$ is depicted in gray.}
	\label{fig:interface}
\end{figure}
Since shape optimization is typically done w.r.t. two or three dimensional domains, we suppose $d \in \{2,3\}$. 
Now, given $\data \in H^2(\nlDom)$, we define \emph{interface identification governed by a nonlocal Dirichlet problem} as follows:
\begin{align}	
	\min_{\weakSol, \shape} \objFun(\weakSol, \shape) \defas j(\weakSol,\shape) + \nu \perimeter(\shape) &\defas \int_{\nlDom} \left(\weakSol - \data\right)^2 ~d\xb + \nu \int_{\shape} 1 ~ds \nonumber \\
	\begin{split}
		\label{nlDirInt}
		\textit{s.t. } -\nlOp_{\shape} \weakSol &= f_\shape \quad \textit{on } \nlDom,\\
		\weakSol &= 0 \quad \textit{ on } \nlBound,
	\end{split}
\end{align}
where 
\begin{align}
	\begin{split}\label{def:int_dep_nlOp}
	-\nlOp_{\shape} \weakSol(\xb) &\defas \int_{\Rd} \weakSol(\xb) \intKernel(\xb,\yb) - \weakSol(\yb)\intKernel(\yb,\xb) ~d\yb \text{ with}\\
	\intKernel &\defas \sum_{i,j=1,2} \kernelij \ind_{\nlDom_i \times \nlDom_j} + \sum_{i=1,2} \kerneliI \ind_{\left(\nlDom_i \times \nlBound\right) \cup \left(\nlBound \times \nlDom_i\right)}. 
\end{split}
\end{align}
For ease of presentation, we only consider homogeneous Dirichlet problems. However, the nonhomogeneous case can be handled analogously.
As we can see in \eqref{def:int_dep_nlOp}, the operator $-\nlOp_{\shape}$ is dependent on the interface, since the choice of kernel $\intKernel(\xb,\yb)$ varies depending on the location of $\xb$ and $\yb$. 
Furthermore, we supposed that the forcing term can be expressed as $f_\shape = f_1 \ind_{\nlDom_1} + f_2 \ind_{\nlDom_2}$, where $f_1, f_2 \in H^2(\nlDom)$.
Then, we denote the corresponding variational formulation of the Dirichlet problem \eqref{nlDirInt} as
\begin{align}\label{eq:weak_form_int}
	\intOp(\weakSol, \advar) = \intForce(\advar) \textit{ for all } \advar \in \testSpace(\completeDom),
\end{align}
where the subscripts highlight that the kernel parameters as well as the forcing term are dependent on the decomposition imposed by the interface $\shape$. 
The objective functional consists of two terms: The first one, which we refer to as $j$, is a tracking-type functional, since we aim to find the interface $\shape$ such that the solution to \eqref{eq:weak_form_int} approximates the given data $\data$ as well as possible. The second integral of the objective function, which we denote by $\perimeter$, is called \emph{perimeter regularization}, which is often used to avoid ill-posedness of the problem (see \cite{perimeter}). Previously, as mentioned above, first-order shape optimization algorithms for the interface identification problem \eqref{nlDirInt} have been investigated in \cite{shape_paper}.\\ 
Since for every suitable interface $\shape$ the nonlocal Dirichlet problem has a unique solution, the problem \eqref{nlDirInt} is only dependent on the shape $\shape$, i.e., we can rewrite \eqref{nlDirInt} as a reduced problem
\begin{align*}
	\min_{\shape} \redFun(\shape) \defas \objFun(\weakSol(\shape), \shape),
\end{align*} 
where $\weakSol(\shape)$ satisfies \eqref{eq:weak_form_int} given $\shape$. Therefore, we continue by recalling necessary basic tools of shape optimization in order to derive the second shape derivative of the reduced functional $\redFun$ and a second order shape optimization algorithm.
\section{Basics of Shape Optimization}
\label{chap:basics_shape_opt}
To define shape derivatives we make use of a family of mappings $\{\Fbt\}_{t\in[0,T]}$, where $\Fbt: \overline{\nlDom} \rightarrow \Rd$, $\Ftzero(\xb) = \xb$ for $\xb \in \overline{\nlDom}$ and $T > 0$ is supposed to be sufficiently small. In this work, we choose to apply the perturbation of identity:
\begin{definition}[Perturbation of Identity]
	\label{def:pert_id}
	Given a vector field $\Vb \in C_0^k(\nlDom,\Rd)$, $k \in \Nbb$ and $t \geq 0$, the perturbation of identity is defined as
	\begin{align*}
		\Fbt^{\Vb}:\overline{\nlDom} \rightarrow \Rd, ~\Fbt^{\Vb}(\xb) \defas \xb + t \Vb(\xb),
	\end{align*}
where $C_0^k(\nlDom,\Rd) \defas \{\Vb \in C^k(\nlDom,\Rd): \Vb \text{ vanishes at the boundary} \}$. 
\end{definition}~\\
Here, ${\Vb \in C^k(\nlDom,\Rd)}$ \emph{vanishes at the boundary}, if the set $\{\xb \in \nlDom: |\partial^{\alpha}\Vb(\xb)| \geq \epsilon \}$ is compact for all $\epsilon > 0$ and partial derivatives $\partial^\alpha\defas\partial^{\alpha_1}_{\xb_1}\partial^{\alpha_2}_{\xb_2}\dots \partial^{\alpha_d}_{\xb_d}$ w.r.t. multi-indices $\alpha \in \Nbb_0^d$ with $0 \leq |\alpha| \leq k$.
If $T$ is chosen small enough such that $||t\Vb||_{W^{1,\infty}(\nlDom, \Rd)} < 1$, then the function $\Fbt^{\Vb}:\overline{\nlDom} \rightarrow \overline{\nlDom}$ is a bijective function, where $\Fbt^{\Vb}$ as well as the corresponding inverse $(\Fbt^{\Vb})^{-1}$ are both Lipschitz continuous (see \cite{henrot_shape}).
\begin{remark}
	In this work, the nonlocal boundary $\nlBound$ will not be deformed since the interface $\shape$ is supposed to be a subset of $\nlDom$. Thus, we extend every vector field $\Vb \in C_0^k(\nlDom,\Rd)$ by zero to a function defined on the domain $\nlDom \cup \nlBound$, i.e., we only consider vector fields that are elements of the set
	\begin{align*}
		\vecfields \defas \{\Vb:\nlDom \cup \nlBound \rightarrow \Rd | \Vb = 0 \textit{ on } \nlBound \textit{ and }  \left. \Vb \right|_{\overline{\nlDom}} \in C_0^k(\nlDom,\Rd) \}.
	\end{align*}
	Furthermore, $\Vb \in \vecfieldsspecific$ is sufficient for the calculations in this work.
\end{remark}~\\
Now, we continue by formulating the definition of the first and second shape derivative.
\begin{definition}[First Shape Derivative]
		Let $\shapeSpace$ be a suitable shape space. Then, for a shape functional $\objFun:\shapeSpace \rightarrow \R$, we define the Eulerian or directional derivative as
		\begin{align*}
			D_{\shape} \objFun(\shape)[\Vb] \defas \lim_{t \searrow 0} \frac{\objFun(\Fbt^{\Vb}(\shape)) - \objFun(\shape)}{t}.
		\end{align*}
		If $D_{\shape} \objFun(\shape)[\Vb]$ exists for all vector fields $\Vb \in \vecfieldsspecific$ and if the function
		\begin{align*}
			D_{\shape}\objFun(\shape) :\vecfieldsspecific \rightarrow \R,~ \Vb \mapsto D_{\shape}\objFun(\shape)[\Vb]
		\end{align*}
		is linear and continuous, the we refer to $D_{\shape}\objFun(\shape)[\Vb]$ as the shape derivative at $\shape$ in direction $\Vb$.
	\end{definition}
	\begin{definition}[Second Shape Derivative]
		\label{def:second_shape_der}
		If for all $\Vb,\Wb \in \vecfieldsspecific$ the derivative
		\begin{align*}
			D_{\shape}^2\objFun(\shape)[\Vb, \Wb] \defas \lim_{t \searrow 0} \frac{D_{\shape} \objFun(\Fbt^{\Wb}(\shape))[\Vb] - D_{\shape} \objFun(\shape)[\Vb]}{t} \text{ exists}
		\end{align*}
	and the function
	\begin{align*}
		D_{\shape}^2 \objFun(\shape): \vecfieldsspecific \times \vecfieldsspecific \rightarrow \R, \left(\Vb, \Wb\right) \mapsto D_{\shape}^2 \objFun(\shape)[\Vb, \Wb]
	\end{align*}
	is bilinear and continuous, then $D_{\shape}^2\objFun(\shape)[\Vb,\Wb]$ is called the second shape derivative at $\shape$ in the direction $(\Vb,\Wb)$. 
	\end{definition}~\\
	The second shape derivative can also be characterized as 
	\begin{align*}
		D_{\shape}^2 \objFun(\shape)[\Vb,\Wb] = \left.\frac{d}{dt} \right|_{t_2 = 0^+} \left. \frac{d}{dt}\right|_{t_1 = 0^+} \objFun\left(\Fb_{t_1}^{\Vb}(\Fb_{t_2}^{\Wb}(\shape))\right),
	\end{align*}
	which indicates, that the second shape derivative is computed by differentiating a functional, where the shape $\shape$ is first perturbed along the vector field $\Wb$ and afterwards along $\Vb$ as described in Definition \ref{def:pert_id}. 
	\begin{remark}
		In the literature, the second shape derivative $D_{\shape}^2 \objFun(\shape)$ is sometimes referred to as the shape Hessian at $\shape$.
	\end{remark}
	\begin{definition}[Shape Space]
		Given a domain $\initialShape \subset \nlDom$, we define the corresponding shape space $\shapeSpace(\initialShape)$ as
		\begin{align*}
			\shapeSpace(\initialShape) \defas \{\Tb(\initialShape)| \Tb: \overline{\nlDom} \rightarrow \overline{\nlDom} \textit{ invertible} \}.
		\end{align*}
	\end{definition}~\\
	In this work, as mentioned before, we only employ the perturbation of identity, which is invertible, if $t > 0$ is small enough. As shown in \cite{linear_view}, interpreting shape optimization as optimizing on a subset of $\{\Tb | \Tb :\overline{\nlDom} \rightarrow \overline{\nlDom}\}$ yields the linear second shape derivative
	\begin{align*}
		\objFun''(\shape)[\Vb, \Wb] \defas \left.\frac{d}{dt} \right|_{t_2 = 0^+} \left. \frac{d}{dt}\right|_{t_1 = 0^+} \objFun((\Id + t_2\Wb + t_1\Vb)(\shape)),
	\end{align*}
	which is symmetric, provides the basis of a Taylor series and thus justifies convergence properties of shape Newton methods. In \cite{structure_shape_der,sturm_newton}, it is proven under natural assumptions on the differentiability of \objFun, that the classical shape derivative can be expressed as
	\begin{align}
		\label{eq:structure_second_shape}
		D^2_{\shape} \objFun(\shape)[\Vb, \Wb] = \objFun''(\shape)[\Vb, \Wb] + D_{\shape} \objFun(\shape)[D\Vb\Wb].
	\end{align}
Here, the second term $D_{\shape} \objFun(\shape)[D\Vb\Wb]$ is only symmetric, if $D\Vb\Wb =D\Wb\Vb$ holds. In the next chapters, we first derive the classical shape derivative $D^2_{\shape} \objFun(\shape)[\Vb, \Wb]$ by making use of the so-called averaged adjoint method. By omitting the possibly nonsymmetric part $D_{\shape} \objFun(\shape)[D\Vb\Wb]$, we will then obtain the linear second shape derivative, which we utilize in a Newton-like algorithm to solve the constrained interface identification problem \eqref{nlDirInt}.
	\begin{remark}\label{remark:perimeter_der}
		The first and second shape derivative of the perimeter regularization $\perimeter$ can be formulated as
		\begin{align*}
			D_{\shape} \perimeter(\shape)[\Vb] &= \int_{\shape} \di_{\shape} (\Vb) ~ds = \int_{\shape} \di (\Vb) - \normal^{\top} D\Vb \normal ~ds \quad \text{and} \\
			D_{\shape}^2 \perimeter(\shape)[\Vb, \Wb] &= \int_{\shape} \di_{\shape} \Vb \di_{\shape} \Wb - \trace\left(D \Vb D \Wb \right) + \langle \left(D \Vb \right)^{\top} \normal, \left(D \Wb \right)^{\top} \normal \rangle ~ds,
		\end{align*}
		where $\di_{\shape} (\Vb) \defas \di (\Vb) - \normal^{\top} D\Vb \normal$ is the tangential divergence and $D_{\shape} \Vb \defas D \Vb - D\Vb \normal \normal^{\top}$ the tangential Jacobian. The formula for $D_{\shape} \perimeter(\shape)[\Vb]$ follows directly from \cite[Lemma 3.3.4]{schmidt_diss}. The derivation of $D_{\shape}^2 \perimeter(\shape)[\Vb, \Wb]$ can be found in \cite[Section 3.1]{schmidt2022capillary}. For ease of presentation, we exclude $\perimeter(\shape)$, $D_{\shape} \perimeter(\shape)[\Vb]$ and $D_{\shape}^2 \perimeter(\shape)[\Vb, \Wb]$ in the calculations of the next chapter.
	\end{remark}
\begin{remark}
	Given a vector field $\Vb \in \vecfieldsspecific$, we set $\shapet^{\Vb}\defas \FbtV(\shape)$ and denote by $\nlDom(\shapet^{\Vb})$ that we consider $\nlDom$ to be decomposed in two domains by $\shapet^{\Vb}$, i.e., $\nlDom(\shapet^{\Vb})=\FbtV(\nlDom_1) \dot{\cup} \shapet^{\Vb} \dot{\cup} \FbtV(\nlDom_2)$ and $\shapet^{\Vb} = \partial \FbtV(\nlDom_1) \cap \partial \FbtV(\nlDom_2)$.
\end{remark}
\section{Second Shape Derivative via the Averaged Adjoint Method}
\label{chap:AAM_second_der}
As mentioned earlier, the interface identification problem constrained by nonlocal models \eqref{nlDirInt} can be written as a reduced optimization problem
\begin{align} \label{def:red_prob}
		\min_{\shape \in \shapeSpace(\initialShape)} \redFun(\shape) \defas \objFun(\weakSol(\shape), \shape),
\end{align}
where $\weakSol(\shape)$ solves \eqref{eq:weak_form_int}.
In \cite{shape_paper} it was shown, how the first shape derivative of \eqref{def:red_prob} can be computed via the averaged adjoint method\cite{sturm_minimax, AAM, Sturm_diss}. Therefore, we refer for the first derivative to \cite{shape_paper} and continue by introducing the \emph{averaged adjoint method for systems of state equations} of \cite{Sturm_diss}, which we apply to develop a formula for the second shape derivative of \eqref{def:red_prob}. We can express the first shape derivative of the reduced functional at the shape $\FbtW(\shape)=\shapetW$ in the direction $\Vb$ as
\begin{align*}
	&D_{\shape}\redFun(\shapetW)[\Vb] = \objFunDertW(\weakSol(\shapetW)) - \varForceDertW(\advar(\shapetW)) + \varOpDertW(\weakSol(\shapetW), \advar(\shapetW)),
\end{align*}
where the three components are defined below.
Here, $\weakSol(\shapetW)$ and $\advar(\shapetW)$ satisfy the equations
\begin{align*}
	\intOptW(\weakSol(\shapetW), \psi) = \intForcetW(\psi) \text{ for all } \psi \in \testSpace(\Fbt^{\Wb}(\nlDom) \cup \nlBound) \text{ and} \\
	\intOptW(\varphi, \advar(\shapetW)) = \secondIntForcetW(\weakSol(\shapetW), \varphi) \text{ for all } \varphi \in \testSpace(\Fbt^{\Wb}(\nlDom) \cup \nlBound),
\end{align*}
where $\secondIntForcetW(\weakSol, \varphi) \defas -\int_{\Fbt^{\Wb}(\nlDom)} \left(\weakSol - \data \right)\varphi ~d\xb$. Additionally, we set $\Vb(\xb,\yb) \defas \left(\Vb(\xb), \Vb(\yb)\right)^{\top}$ as well as
\begin{align} 
	\Psi_{\shapetW,\Vb}^1(\xb,\yb)\defas\grad \kernel_{\shapetW}(\xb,\yb)^{\top}\Vb(\xb,\yb) \text{ and }
	&\Psi_{\shapetW,\Vb}^2(\xb,\yb) \defas \kernel_{\shapetW}(\xb,\yb)(\di \Vb(\xb) + \di \Vb(\yb)), \label{defs:Phis}
\end{align}
where $\grad \kernel_{\shapetW} = \left(\grad_{\xb}\kernel_{\shapetW}, \grad_{\yb}\kernel_{\shapetW}\right)^{\top}$.
Thus, the three shape derivatives of objective, forcing term and bilinear form are defined in the following way:
\begin{align*}
	&\objFunDertW(\weakSol) \defas \int_{\Fbt^{\Wb}(\nlDom)} -(\weakSol(\xb) - \data(\xb))\grad\data(\xb)^{\top}\Vb(\xb) + \frac{1}{2} (\weakSol(\xb) - \data(\xb))^2 \di \Vb(\xb) ~d\xb, \\
	&\varForceDertW(\advar)\defas \int_{\Fbt^{\Wb}(\nlDom)} \advar(\xb) \grad f(\xb)^{\top}\Vb(\xb) + f(\xb)\advar(\xb) \di \Vb(\xb) ~d\xb \text{ and} \\
	&\varOpDertW(\weakSol, \advar) \defas \frac{1}{2} \iint_{(\Fbt^{\Wb}(\completeDom))^2} \left(\advar(\xb) - \advar(\yb)\right)\left(\weakSol(\xb)\Psi_{\shapetW,\Vb}^1(\xb,\yb) - \weakSol(\yb)\Psi_{\shapetW,\Vb}^1(\yb,\xb) \right) ~d\yb d\xb\\
	&+\frac{1}{2} \iint_{(\Fbt^{\Wb}(\completeDom))^2} \left(\advar(\xb) - \advar(\yb)\right)\left(\weakSol(\xb)\Psi_{\shapetW,\Vb}^2(\xb,\yb) - \weakSol(\yb)\Psi_{\shapetW,\Vb}^2(\yb,\xb)\right) ~d\yb d\xb.
\end{align*}
Then, we set the corresponding Lagrangian as
\begin{align*}
	\lagrangian(t, \weakSol, \advar, \psi, \varphi) \defas \objFunDertW(\weakSol) - \varForceDertW(\advar) + \varOpDertW(\weakSol, \advar) &+ \intOptW(\weakSol,\psi) - \intForcetW(\psi) \\
	&+ \intOptW(\varphi, \advar) - \secondIntForcetW(\weakSol, \varphi).
\end{align*}
Thus, for the reduced functional holds
\begin{align*}
	D_{\shape}\redFun(\shapetW)[\Vb] = \lagrangian(t,\weakSol(\shapetW),\advar(\shapetW), \psi, \varphi) \text{ for any } \psi,\varphi \in \testSpace(\nlDom(\shapetW) \cup \nlBound).
\end{align*}
In order to avoid computing derivatives of $\weakSol$, $\advar$, $\psi$ or $\varphi$ to derive the second shape derivative, we use the perturbation of identity, which is a diffeomorphism, as a so-called \emph{pull-back function}. Due to this characteristic of the perturbation of identity, we can find for any function ${\weakSol \in \testSpace(\nlDom(\shapetW) \cup \nlBound)}$ a unique $\widetilde{\weakSol} \in \testSpace(\completeDom)$ with $\weakSol = \widetilde{\weakSol} \circ (\Fbt^{\Wb})^{-1}$.
Consequently, we can define an alternative Lagrangian functional as
\begin{align*}
	&\reallagrangian:[0,T] \times \testSpace(\completeDom)^4 \rightarrow \R, \\
	&\reallagrangian(t, \weakSol, \advar, \psi, \varphi) \defas \lagrangian(t, \weakSol \circ (\Fbt^{\Wb})^{-1}, \advar \circ (\Fbt^{\Wb})^{-1}, \psi \circ (\Fbt^{\Wb})^{-1}, \varphi \circ (\Fbt^{\Wb})^{-1}).
\end{align*}
Another advantage of this function space parametrization is, that the function spaces for $\weakSol$, $\advar$, $\psi$ and $\varphi$ do not depend on $t$ anymore. Further, we set
\begin{align*}
	&\objFunDer: [0,T] \times \testSpace(\completeDom) \rightarrow \R, ~\objFunDer(t, \weakSol) \defas \objFunDertW(\weakSol \circ (\Fbt^{\Wb})^{-1}),\\
	&\varForceDer: [0,T] \times \testSpace(\completeDom) \rightarrow \R, ~\varForceDer(t, \weakSol) \defas \varForceDertW(\advar \circ (\Fbt^{\Wb})^{-1}),\\
	&\varOpDer: [0,T] \times \testSpace(\completeDom)^2 \rightarrow \R, ~\varOpDer(t, \weakSol, \advar) \defas \varOpDertW(\weakSol \circ (\Fbt^{\Wb})^{-1}, \advar \circ (\Fbt^{\Wb})^{-1}),\\
	&\varOp: [0,T] \times \testSpace(\completeDom)^2 \rightarrow \R, ~\varOp(t, \weakSol, \advar) \defas \intOptW(\weakSol \circ (\Fbt^{\Wb})^{-1}, \advar \circ (\Fbt^{\Wb})^{-1}),\\
	&\varForce:[0,T] \times \testSpace(\completeDom) \rightarrow \R, ~\varForce(t,\psi) \defas \intForcetW(\psi \circ (\Fbt^{\Wb})^{-1}),\\
	&\secondVarForce:[0,T] \times \testSpace(\completeDom)^2 \rightarrow \R, ~\secondVarForce(t, \weakSol, \varphi) \defas \secondIntForcetW(\weakSol \circ (\Fbt^{\Wb})^{-1}, \varphi \circ (\Fbt^{\Wb})^{-1}).
\end{align*}
 In order to apply AAM the following assumptions need to be satisfied.
\begin{itemize}
	\item[\bf(D0)] For all $t \in [0, T]$ and $\weakSol,\widetilde{\weakSol}, \widehat{\weakSol}, \advar, \widetilde{\advar}, \widehat{\advar}, \psi, \varphi \in \testSpace(\completeDom)$ we have
	\begin{enumerate}[(i)]
		\item $\reallagrangian_{1}: [0,1] \rightarrow \R, s \mapsto \reallagrangian(t, \weakSol + s\widetilde{\weakSol}, \advar, \psi, \varphi) \text{ and } 
		\reallagrangian_{2}: [0,1] \rightarrow \R, s \mapsto \reallagrangian(t, \weakSol, \advar + s\widetilde{\advar}, \psi, \varphi)$ are absolutely continuous.
		\item $[0,1]\ni s \mapsto d_{\weakSol} \reallagrangian(t, \weakSol + s\widehat{\weakSol}, \advar, \psi, \varphi)[\widetilde{\weakSol}] \text{ and } [0,1] \ni s \mapsto d_{\advar}\reallagrangian(t, \weakSol, \advar + s\widehat{\advar}, \psi, \varphi)[\widetilde{\advar}]$ belong to $L^1((0,1))$.
		\item $\psi \mapsto \reallagrangian(t, \weakSol, \advar, \psi, \varphi) \text{ and } \varphi \mapsto \reallagrangian(t, \weakSol, \advar, \psi, \varphi)$ are affine-linear.
	\end{enumerate}
	\item[\bf(D1)] For all $\weakSol,\advar,\psi,\varphi \in \testSpace(\completeDom)$ the function
	\begin{align*}
		[0,T] \rightarrow \R, t \mapsto \reallagrangian(t,\weakSol,\advar,\psi,\varphi)
	\end{align*}
is differentiable.
	\item[\bf(D2)] For all $t \in [0,T]$
	\begin{itemize}
		\item there exist unique solutions $\weakSol^t$ and $\advar^t$ to the (state) equations
		\begin{align}
			\varOp(t, \weakSol^t,\psi) = \varForce(t,\psi) \text{ for all } \psi \in \testSpace(\completeDom), \label{eq:state_1} \\
			\varOp(t, \varphi, \advar^t) = \secondVarForce(t, \weakSol^t, \varphi) \text{ for all } \varphi \in \testSpace(\completeDom). \label{eq:state_2}
		\end{align}
		\item there exist unique solutions $\psi^t, \varphi^t \in \testSpace(\completeDom)$ to the \emph{averaged adjoint equations}
		\begin{align}
			\int_0^1 d_{\weakSol} \reallagrangian(t, s\weakSol^t + (1-s)\weakSol^0,\advar^t, \psi^t,\varphi^t)[\widetilde{\weakSol}] ~ds = 0 \text{ for all } \widetilde{\weakSol} \in \testSpace(\completeDom), \label{eq:AAE1} \\
			\int_0^1 d_{\advar} \reallagrangian(t, \weakSol^0, s\advar^t + (1-s)\advar^0,\psi^t,\varphi^t)[\widetilde{\advar}] ~ds = 0 \text{ for all } \widetilde{\advar} \in \testSpace(\completeDom). \label{eq:AAE2}
		\end{align}
	\end{itemize}
	\item[\bf(D3)] For every sequence $(s_n)_{n \in \Nbb}$ with $\lim_{n \rightarrow \infty} s_n = 0$, there exists a subsequence $(s_{n_k})_{k \in \Nbb}$ such that
	\begin{align*}
		\lim_{k \rightarrow \infty,
		t \searrow 0} \partial_t \reallagrangian(t, \weakSol^0, \advar^0, \psi^{s_{n_k}}, \varphi^{s_{n_k}}) = \partial_t \reallagrangian(0,\weakSol^0,\advar^0,\psi^0,\varphi^0).
	\end{align*}
\end{itemize}
\begin{theorem}[Averaged Adjoint Method for Systems of State Equations]
If the assumptions (D0)-(D3) are fulfilled, we get for any $\psi,\varphi \in \testSpace(\completeDom)$ that
\begin{align*}
	\left. \frac{d}{dt} \right|_{t\searrow 0} \reallagrangian(t,\weakSol^t,\advar^t,\psi,\varphi) = \partial_t \reallagrangian(0,\weakSol^0,\advar^0,\psi^0,\varphi^0).
\end{align*}
\end{theorem}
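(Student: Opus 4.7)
The plan is to reduce the total difference quotient of $\reallagrangian(t,\weakSol^t,\advar^t,\psi,\varphi)$ to an expression whose $t$-dependence is purely explicit, so that hypotheses (D1) and (D3) can finish the job. First I would exploit that $\reallagrangian$ is affine in each of $\psi$ and $\varphi$ by (D0)(iii), together with the state equations \eqref{eq:state_1}--\eqref{eq:state_2} from (D2), to conclude that all four terms of $\reallagrangian(t,\weakSol^t,\advar^t,\cdot,\cdot)$ depending on $\psi$ or $\varphi$ vanish. Hence that value is independent of its last two arguments, and I may replace $(\psi,\varphi)$ by the averaged adjoint pair $(\psi^t,\varphi^t)$ on both sides of the difference $\reallagrangian(t,\weakSol^t,\advar^t,\psi,\varphi) - \reallagrangian(0,\weakSol^0,\advar^0,\psi,\varphi)$ without altering its value.

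Next I would telescope the modified difference as
\begin{align*}
\reallagrangian(t,\weakSol^t,\advar^t,\psi^t,\varphi^t) &- \reallagrangian(0,\weakSol^0,\advar^0,\psi^t,\varphi^t) \\
&= \bigl[\reallagrangian(t,\weakSol^t,\advar^t,\psi^t,\varphi^t) - \reallagrangian(t,\weakSol^0,\advar^t,\psi^t,\varphi^t)\bigr] \\
&\quad + \bigl[\reallagrangian(t,\weakSol^0,\advar^t,\psi^t,\varphi^t) - \reallagrangian(t,\weakSol^0,\advar^0,\psi^t,\varphi^t)\bigr] \\
&\quad + \bigl[\reallagrangian(t,\weakSol^0,\advar^0,\psi^t,\varphi^t) - \reallagrangian(0,\weakSol^0,\advar^0,\psi^t,\varphi^t)\bigr].
\end{align*}
The absolute continuity and $L^1$-integrability from (D0)(i)--(ii) allow the fundamental theorem of calculus to rewrite the first bracket as the integral $\int_0^1 d_{\weakSol}\reallagrangian(t,\weakSol^0 + s(\weakSol^t-\weakSol^0),\advar^t,\psi^t,\varphi^t)[\weakSol^t-\weakSol^0]\,ds$, and the second as the analogous integral centered at $\advar^0$ in the $\advar$-direction. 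Choosing the test directions $\widetilde{\weakSol}\defas \weakSol^t-\weakSol^0$ and $\widetilde{\advar}\defas \advar^t-\advar^0$ in the averaged adjoint equations \eqref{eq:AAE1}--\eqref{eq:AAE2} then makes both brackets identically zero.

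The remaining third bracket depends on $t$ only through the explicit slot, so by (D1) and the mean value theorem it equals $t\cdot\partial_t\reallagrangian(\xi_t,\weakSol^0,\advar^0,\psi^t,\varphi^t)$ for some $\xi_t\in(0,t)$. Dividing by $t$ and sending $t\searrow 0$ along an arbitrary sequence $(s_n)$, hypothesis (D3) furnishes a subsequence converging to $\partial_t\reallagrangian(0,\weakSol^0,\advar^0,\psi^0,\varphi^0)$; since every such sequence admits a subsequence with the same limit, the full limit exists and yields the claimed identity. The main obstacle will be aligning the telescoped intermediate-point integrands \emph{exactly} with the convex-combination arguments prescribed in \eqref{eq:AAE1}--\eqref{eq:AAE2}; the asymmetry of keeping $\advar^t$ frozen in the first bracket but $\weakSol^0$ frozen in the second is forced by the order of the two averaged adjoint equations in (D2), and any deviation from this pattern breaks the cancellation.
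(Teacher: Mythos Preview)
Your argument is correct and is precisely the standard proof of the averaged adjoint method; the paper does not give its own proof but merely cites \cite[Theorem 4.5]{Sturm_diss}, where this same telescoping-plus-cancellation strategy is carried out. The only point worth tightening is the passage through (D3): the joint limit $\lim_{k\to\infty,\,t\searrow 0}$ in (D3) must be read so that setting $t=\xi_{t_{n_k}}\in(0,t_{n_k})$ is admissible, which it is since $\xi_{t_{n_k}}\to 0$ as $k\to\infty$; you have implicitly used this, and it is exactly how Sturm argues.
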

\begin{proof}
	See \cite[Theorem 4.5]{Sturm_diss}.
\end{proof}
\begin{corollary}
	If assumptions (D0)-(D3) hold, the second shape derivative of the reduced functional can be computed via
	\begin{align*}
		D_{\shape}^2\redFun(\shape)[\Vb,\Wb] = \partial_t \reallagrangian(0,\weakSol^0,\advar^0,\psi^0,\varphi^0).
	\end{align*} 
\end{corollary}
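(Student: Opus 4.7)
The plan is to combine three ingredients already in place: the Lagrangian identity for the first shape derivative, the pull-back change of variables, and the AAM theorem for systems of state equations. Unpacking the definition of the second shape derivative, I first observe
$$D_{\shape}^2\redFun(\shape)[\Vb,\Wb] = \left.\frac{d}{dt}\right|_{t\searrow 0} D_{\shape}\redFun(\shapetW)[\Vb],$$
so the task reduces to a single $t$-derivative of the first shape derivative, viewed as a function of the outer parameter $t$ that controls the $\Wb$-perturbation.

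Next, I would invoke the Lagrangian identity recalled just above the corollary,
$$D_{\shape}\redFun(\shapetW)[\Vb] = \lagrangian(t,\weakSol(\shapetW),\advar(\shapetW),\psi,\varphi)$$
valid for any $\psi,\varphi \in \testSpace(\nlDom(\shapetW)\cup\nlBound)$ because at $(\weakSol(\shapetW),\advar(\shapetW))$ the two state residuals inside $\lagrangian$ both vanish. Setting $\weakSol^t \defas \weakSol(\shapetW)\circ\FbtW$ and $\advar^t \defas \advar(\shapetW)\circ\FbtW$, and parametrizing the test functions as $\psi = \widetilde\psi\circ(\FbtW)^{-1}$, $\varphi = \widetilde\varphi\circ(\FbtW)^{-1}$ with $\widetilde\psi,\widetilde\varphi\in\testSpace(\completeDom)$, the definition of $\reallagrangian$ translates this into
$$D_{\shape}\redFun(\shapetW)[\Vb] = \reallagrangian(t,\weakSol^t,\advar^t,\widetilde\psi,\widetilde\varphi) \quad \text{for all } \widetilde\psi,\widetilde\varphi \in \testSpace(\completeDom).$$
A direct change of variables in the original weak form moreover shows that these $\weakSol^t$ and $\advar^t$ satisfy the pulled-back state system \eqref{eq:state_1}--\eqref{eq:state_2}, so by the uniqueness requirement in (D2) they coincide with the pulled-back states appearing there.

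The result then follows at once from the AAM theorem: under (D0)--(D3),
$$\left.\frac{d}{dt}\right|_{t\searrow 0}\reallagrangian(t,\weakSol^t,\advar^t,\widetilde\psi,\widetilde\varphi) = \partial_t \reallagrangian(0,\weakSol^0,\advar^0,\psi^0,\varphi^0),$$
and combining the three displays gives the claim. The main obstacle is not the corollary proper---which, once the pull-back identification is in hand, is essentially a bookkeeping exercise between $\lagrangian$, $\reallagrangian$ and the AAM theorem---but rather verifying that (D0)--(D3) hold for the nonlocal interface identification problem \eqref{nlDirInt}. In particular, one has to establish existence and uniqueness of the averaged adjoints $\psi^t,\varphi^t$ solving the non-standard equations \eqref{eq:AAE1}--\eqref{eq:AAE2}, together with the subsequential continuity property (D3) of $\partial_t\reallagrangian$ in the adjoint arguments; this is exactly what the following chapter of the paper takes up.
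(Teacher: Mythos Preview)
Your proposal is correct and follows precisely the route the paper intends: the corollary is stated without proof because it is an immediate consequence of combining the identity $D_{\shape}\redFun(\shapetW)[\Vb] = \lagrangian(t,\weakSol(\shapetW),\advar(\shapetW),\psi,\varphi)$, the pull-back definition of $\reallagrangian$, and the AAM theorem for systems of state equations just above. You have spelled out exactly these connecting steps, including the identification of the pulled-back states with the $\weakSol^t,\advar^t$ of (D2), so nothing is missing.
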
~\\
Finally, we would like to highlight the fact, that the solutions $\weakSol^0$ and $\advar^0$ to the state equations \eqref{eq:state_1} and \eqref{eq:state_2} for $t=0$ are independent of $\Vb$. However, since the adjoint equations $\eqref{eq:AAE1}$ and $\eqref{eq:AAE2}$ involve the shape derivative at $\shape$ in the direction $\Vb$, the functions $\psi^0$ and $\varphi^0$ need to be computed for every $\Vb$ separately. Therefore, we write $\psi(\shape,\Vb)$ and $\varphi(\shape,\Vb)$ in Chapter \ref{chap:num_exp} to explicitly express which adjoints are calculated during the presented algorithm.
\section{Deriving the Second Shape Derivative for the Reduced Objective Function}
\label{chap:AAM_proof}
Before we actually calculate the second shape derivative, we need to prove that the Assumptions (D0)-(D3) hold.
\begin{remark}
	In the following we use the abbreviations 
	\begin{align*}
		&\data^t(\xb) \defas \data(\FbtW(\xb)), f^t(\xb) \defas f_{\shapetW}(\FbtW(\xb)), \kernelt(\xb, \yb) \defas \kernel_{\shapetW}(\FbtW(\xb), \FbtW(\yb)), \xt(\xb) \defas D\FbtW(\xb),\\
		&\Vbt(\xb) \defas \Vb(\FbtW(\xb)), \Vbt(\xb,\yb) \defas (\Vbt(\xb), \Vbt(\yb))^{\top}, \Psi_1^t(\xb,\yb)\defas \grad \kernelt(\xb,\yb)^{\top}\Vbt(\xb,\yb) \text{ and} \\
		&\Psi_2^t(\xb,\yb) \defas \kernelt(\xb,\yb)(\di \Vbt(\xb) + \di \Vbt(\yb)).
	\end{align*}
As a result, $\Psi_i^t(\xb,\yb) = \Psi_{\shapetW,\Vbt}^i(\FbtW(\xb),\FbtW(\yb))$ for all $\xb,\yb \in \completeDom$ and $i=1,2$.
\end{remark}
\begin{lemma}\label{lemma:D0_fulfilled}
	Given any $t \in [0, T]$ and $\weakSol,\widetilde{\weakSol}, \widehat{\weakSol}, \advar, \widetilde{\advar}, \widehat{\advar}, \psi, \varphi \in \testSpace(\completeDom)$, Assumption (D0) is fulfilled.
\end{lemma}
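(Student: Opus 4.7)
The plan is to exploit the fact that $\reallagrangian$, as a function of each of its four function arguments separately, is a polynomial of very low degree, which makes (D0) a structural observation rather than a real analytic task. Concretely, inspection of the building blocks shows that $\objFunDer(t,\weakSol)$ is a polynomial of degree two in $\weakSol$, $\varForceDer(t,\advar)$ is linear in $\advar$, $\varOpDer(t,\weakSol,\advar)$, $\varOp(t,\weakSol,\psi)$ are bilinear in their function arguments, $\varForce(t,\psi)$ is linear in $\psi$, and $\secondVarForce(t,\weakSol,\varphi)=-\int_{\FbtW(\nlDom)}(\weakSol-\data)\varphi$ is affine in $\weakSol$ and linear in $\varphi$. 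Assembling these according to the definition of $\lagrangian$ (and then of $\reallagrangian$ via composition with the fixed diffeomorphism $(\FbtW)^{-1}$, which does not affect polynomial degree in the function arguments) yields that $\reallagrangian(t,\cdot,\advar,\psi,\varphi)$ is a quadratic polynomial in $\weakSol$, $\reallagrangian(t,\weakSol,\cdot,\psi,\varphi)$ is affine in $\advar$, and $\reallagrangian(t,\weakSol,\advar,\cdot,\varphi)$, $\reallagrangian(t,\weakSol,\advar,\psi,\cdot)$ are affine in $\psi$ and $\varphi$, respectively.

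From this the three items of (D0) follow in sequence. For (iii) I would simply read off the affine dependence on $\psi$ and $\varphi$: the only occurrences of $\psi$ are inside $\varOp(t,\weakSol,\psi)-\varForce(t,\psi)$ and the only occurrences of $\varphi$ are inside $\varOp(t,\varphi,\advar)-\secondVarForce(t,\weakSol,\varphi)$, both of which are affine-linear in the respective test function. For (i), the map $s\mapsto \reallagrangian(t,\weakSol+s\widetilde{\weakSol},\advar,\psi,\varphi)$ is a quadratic polynomial in $s$, hence smooth on $[0,1]$ and in particular absolutely continuous; the map $s\mapsto \reallagrangian(t,\weakSol,\advar+s\widetilde{\advar},\psi,\varphi)$ is affine in $s$, again smooth. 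For (ii), differentiation of the quadratic (resp.\ affine) expressions in $s$ produces a polynomial of degree one (resp.\ zero), which is bounded on $[0,1]$, hence belongs to $L^1((0,1))$.

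I expect no real obstacle here; the entire statement is a book-keeping exercise confirming that $\reallagrangian$ inherits the polynomial structure from its constituents. The only mildly delicate point is to make explicit that after pull-back by $(\FbtW)^{-1}$ the integrals defining $\objFunDer$, $\varForceDer$, $\varOpDer$, $\varOp$, $\varForce$, $\secondVarForce$ remain polynomial of the same degree in the function arguments, because the pull-back introduces only multiplicative Jacobian factors that depend on $t$ (and on $\Wb$) but not on $\weakSol,\advar,\psi,\varphi$. Once that is noted, the three assertions of (D0) are immediate.
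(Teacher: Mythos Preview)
Your proposal is correct and follows essentially the same approach as the paper: both arguments rest on the observation that $\reallagrangian$ is quadratic in $\weakSol$, affine in $\advar$, and affine in $\psi,\varphi$, so that (D0)(i)--(iii) follow immediately from this polynomial structure. The paper additionally writes out the explicit formulas for $d_{\weakSol}\reallagrangian$ and $d_{\advar}\reallagrangian$ (which are needed later to identify the averaged adjoint equations), whereas you argue purely structurally, but the underlying reasoning is the same.
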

\begin{proof}
	\begin{enumerate}
		\item We get by using the linearity of $\varOp$ and $\varOpDer$ regarding the second and third argument
	\begin{align*}
		\reallagrangian_{1}'(s) =& d_\weakSol \reallagrangian(t, \weakSol + s\widetilde{\weakSol}, \advar, \psi, \varphi)[\widetilde{\weakSol}] \\
		=& d_{\weakSol} \objFunDer(t, \weakSol + s \widetilde{\weakSol})[\widetilde{\weakSol}]
		+ \varOpDer(t, \widetilde{\weakSol}, \advar)
		+ \varOp(t, \widetilde{\weakSol}, \psi) - d_{\weakSol}\secondVarForce(t, \weakSol + s\widetilde{\weakSol}, \varphi)[\widetilde{\weakSol}], \\
		\reallagrangian_{2}'(s) =& d_\advar \reallagrangian(t, \weakSol, \advar + s\widetilde{\advar}, \psi, \varphi)[\widetilde{\advar}]
		= - \varForceDer(t,\widetilde{\advar}) + \varOpDer(t, \weakSol, \widetilde{\advar}) + \varOp(t,\varphi, \widetilde{\advar}),
	\end{align*}
		where $\varForceDer$ is also linear in the second argument and where we have
		\begin{align*}
			d_{\weakSol}\secondVarForce(t,\weakSol, \varphi)[\widetilde{\weakSol}] &= -\int_{\nlDom} \widetilde{\weakSol}\varphi\xt~d\xb \text{ and } \\
			d_{\weakSol} \objFunDer(t,\weakSol)[\widetilde{\weakSol}] &= \int_{\nlDom} - \widetilde{\weakSol} (\grad \data^t)^{\top} \Vb^t \xt  + (\weakSol - \data^t) \widetilde{\weakSol} \di \Vb^t \xt ~d\xb.
		\end{align*}
		Thus, $\reallagrangian_{1}$ and $\reallagrangian_{2}$ are continuously differentiable and therefore absolutely continuous. Lastly, note that $d_{\weakSol}\secondVarForce(t,\weakSol, \varphi)[\widetilde{\weakSol}]$ is independent of the choice of $\weakSol$.
		\item By examining the computations in 1. we directly see that $\reallagrangian_{2}'$ is constant on $(0,1)$. Further, the variable $s$ can be found in only one integral of $\reallagrangian_{1}'$ and we get as a consequence of Fubini's theorem
		\begin{align*}
			\int_0^1 d_{\weakSol}\reallagrangian_{1}(t, \weakSol + s \widehat{\weakSol},\advar,\psi, \varphi)[\widetilde{\weakSol}] ~ds =& \int_{\nlDom} \left(\weakSol + \frac{1}{2} \widehat{\weakSol} - \data^t \right) \widetilde{\weakSol}\di \Vb^t \xt - \widetilde{\weakSol}(\grad\data^t)^{\top}\Vb^t \xt ~d\xb \\
			&+ \varOpDer(t,\widetilde{\weakSol}, \advar) + \varOp(t,\widetilde{\weakSol}, \psi) - d_{\weakSol}\secondVarForce(t,\weakSol, \advar)[\widetilde{\weakSol}].
		\end{align*}
		\item Direct consequence due to the linearity of $\varOp$, $\varForce$ and $\secondVarForce$ in the corresponding argument.
	\end{enumerate}
\end{proof}
\begin{corollary}
	As seen in the proof of Lemma \ref{lemma:D0_fulfilled}, the averaged adjoint equations \eqref{eq:AAE1} and \eqref{eq:AAE2} can be formulated as
	\begin{align}
		\text{Find } \psi^t,\varphi^t &\in \testSpace(\completeDom) \text{ such that} \nonumber\\
		\varOp(t, \widetilde{\weakSol}, \psi^t) =& - d_{\weakSol} \objFunDer(t, \frac{1}{2}(\weakSol^t + \weakSol^0))[\widetilde{\weakSol}] - \varOpDer(t, \widetilde{\weakSol}, \advar^t) + d_{\weakSol} \secondVarForce(t, \weakSol^0,\varphi^t)[\widetilde{\weakSol}]  \label{eq:AAE1_nonlocal} \\
		&\hspace{15em} \text{for all } \widetilde{\weakSol} \in \testSpace(\completeDom) \text{ and} \nonumber\\
	\varOp(t,\varphi^t, \widetilde{\advar}) =& \varForceDer(t,\widetilde{\advar}) - \varOpDer(t,\weakSol^0,\widetilde{\advar}) \text{ for all } \widetilde{\advar} \in \testSpace(\completeDom). \label{eq:AAE2_nonlocal}	
\end{align}
\end{corollary}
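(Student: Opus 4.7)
The plan is to take the explicit expressions for $\reallagrangian_1'(s)$ and $\reallagrangian_2'(s)$ computed in the proof of Lemma \ref{lemma:D0_fulfilled}, substitute them into the averaged adjoint equations \eqref{eq:AAE1}--\eqref{eq:AAE2}, and rearrange. No deep argument is needed; the main content is noticing that the integrations in $s$ either collapse trivially or produce exactly the midpoint $\tfrac{1}{2}(\weakSol^t+\weakSol^0)$ that appears as an argument of $\objFunDer$ in the target formula.

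For \eqref{eq:AAE2_nonlocal} I would proceed first, since it is the simpler of the two. From step 1 of the lemma proof, $d_{\advar}\reallagrangian(t,\weakSol,\advar+s\widetilde{\advar},\psi,\varphi)[\widetilde{\advar}] = -\varForceDer(t,\widetilde{\advar}) + \varOpDer(t,\weakSol,\widetilde{\advar}) + \varOp(t,\varphi,\widetilde{\advar})$, which does not depend on $s$ at all. Hence in \eqref{eq:AAE2} the integrand is constant in $s$ (with $\weakSol=\weakSol^0$, $\varphi=\varphi^t$), the $\int_0^1 ds$ is trivial, and solving for $\varOp(t,\varphi^t,\widetilde{\advar})$ yields \eqref{eq:AAE2_nonlocal}.

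For \eqref{eq:AAE1_nonlocal} the calculation displayed in step 2 of the lemma proof is exactly what is required: setting $\weakSol=\weakSol^0$ and $\widehat{\weakSol}=\weakSol^t-\weakSol^0$ so that $\weakSol+s\widehat{\weakSol}=s\weakSol^t+(1-s)\weakSol^0$, the interchange of $\int_0^1 ds$ with $\int_\nlDom d\xb$ (Fubini, as used there) produces the factor $\tfrac{1}{2}(\weakSol^t-\weakSol^0)$ inside the bracket, so the bracket becomes $\tfrac{1}{2}(\weakSol^0+\weakSol^t)-\data^t$. This is precisely the value one obtains by evaluating $d_{\weakSol}\objFunDer(t,\cdot)[\widetilde{\weakSol}]$ at the averaged state $\tfrac{1}{2}(\weakSol^0+\weakSol^t)$, using the formula for $d_{\weakSol}\objFunDer$ derived in step 1. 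Substituting into \eqref{eq:AAE1} (with $\advar=\advar^t$, $\psi=\psi^t$, $\varphi=\varphi^t$) and isolating the $\varOp(t,\widetilde{\weakSol},\psi^t)$ term gives \eqref{eq:AAE1_nonlocal}.

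The only place where one has to be mildly careful is the identification of the averaged integrand with $d_{\weakSol}\objFunDer(t,\tfrac{1}{2}(\weakSol^0+\weakSol^t))[\widetilde{\weakSol}]$: this works because the $\weakSol$-dependence in $\objFunDer(t,\weakSol)$ is affine (a linear term from $(\weakSol-\data^t)^2/2$ after differentiation in $\weakSol$, and a linear term from $-(\weakSol-\data^t)\grad\data^{t\top}\Vb^t$), so the $s$-average of $d_{\weakSol}\objFunDer(t,s\weakSol^t+(1-s)\weakSol^0)[\widetilde{\weakSol}]$ equals the derivative evaluated at the midpoint. The $\secondVarForce$ term is even simpler: its $\weakSol$-derivative is already independent of $\weakSol$, so it can be pulled out of $\int_0^1 ds$ and evaluated at $\weakSol^0$ without modification, matching $d_{\weakSol}\secondVarForce(t,\weakSol^0,\varphi^t)[\widetilde{\weakSol}]$ in \eqref{eq:AAE1_nonlocal}. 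I do not expect any genuine obstacle; the statement is essentially a restatement of the lemma's formulas in the form most convenient for the derivative calculations of the next section.
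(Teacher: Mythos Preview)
Your proposal is correct and follows essentially the same approach as the paper; indeed, the paper gives no separate proof for this corollary, treating it as an immediate restatement of the computations already carried out in the proof of Lemma~\ref{lemma:D0_fulfilled}. Your observations about the trivial $s$-integration for \eqref{eq:AAE2_nonlocal} and the midpoint identification for \eqref{eq:AAE1_nonlocal} are exactly the content implicit in the phrase ``As seen in the proof of Lemma~\ref{lemma:D0_fulfilled}.''
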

\begin{lemma}[{After \cite[Proposition 2.32]{sokolowski_Introduction}}]
	\label{lemma:frechet_diff_bounded_domain}
	Let $\holdAll \subset \Rd$ be an open and bounded domain with nonzero measure, $f \in W^{1,1}(\holdAll,\R)$ and $\Vb \in C_{0}^1(\holdAll, \Rd)$, where $k \in \Nbb$. Moreover, assume that $T > 0$ is sufficiently small, such that $\FbtV: \overline{\holdAll} \rightarrow \overline{\holdAll}$ is bijective for all $t \in [0,T]$. Then, $t \mapsto f \circ \FbtV$ is differentiable in $L^1(\holdAll)$ with
	\begin{align*}
		\left.\frac{d}{dt}\right|_{t=r} \left(f \circ \FbtV\right)(\xb) = \grad f(\textbf{F}_r^{\Vb}(\xb) )^{\top} \Vb(\xb) \text{ for } r \in [0,T].
	\end{align*} 
\end{lemma}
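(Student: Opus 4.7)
The plan is to reduce the statement to the smooth case by density and then to establish the differentiation formula pointwise via the fundamental theorem of calculus along the trajectory $s \mapsto \Fb_s^{\Vb}(\xb) = \xb + s\Vb(\xb)$. The workhorse will be the fact that, since $T$ is chosen so that $\Fbt^{\Vb}: \overline{\holdAll} \to \overline{\holdAll}$ is bi-Lipschitz for all $t \in [0,T]$, the Jacobian $\det D\Fbt^{\Vb}$ is bounded above and below uniformly in $t$. Hence the change-of-variables formula gives the uniform bound
\begin{align*}
\|g \circ \Fbt^{\Vb}\|_{L^1(\holdAll)} \leq C\,\|g\|_{L^1(\holdAll)}
\end{align*}
with $C$ independent of $t \in [0,T]$, which I will use repeatedly.

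First I would treat $f \in C^1(\overline{\holdAll})$. The classical chain rule applied to $s \mapsto f(\xb + s\Vb(\xb))$ yields pointwise
\begin{align*}
f(\Fbt^{\Vb}(\xb)) - f(\Fb_r^{\Vb}(\xb)) = \int_r^t \grad f(\Fb_s^{\Vb}(\xb))^{\top} \Vb(\xb) \, ds.
\end{align*}
Dividing by $t - r$ and subtracting $\grad f(\Fb_r^{\Vb}(\xb))^{\top}\Vb(\xb)$ expresses the difference as an average over $[r,t]$ whose integrand tends to zero uniformly in $\xb$ by the uniform continuity of $\grad f$ on the compact set $\overline{\holdAll}$ and the boundedness of $\Vb$. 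Since $\holdAll$ has finite measure, $L^1$-convergence follows.

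For general $f \in W^{1,1}(\holdAll)$, I would invoke a Meyers--Serrin-type density result to choose $f_n \in C^\infty(\overline{\holdAll})$ with $f_n \to f$ in $W^{1,1}$. Using the uniform change-of-variables estimate, the error terms
\begin{align*}
\|(f - f_n) \circ \Fbt^{\Vb}\|_{L^1}, \qquad \|(f - f_n) \circ \Fb_r^{\Vb}\|_{L^1}, \qquad \|\grad(f - f_n)(\Fb_r^{\Vb}(\cdot))^{\top}\Vb\|_{L^1}
\end{align*}
are each bounded by $C\,\|f - f_n\|_{W^{1,1}}$ uniformly in $t$, so a triangle inequality reduces the convergence of the difference quotient for $f$ to the convergence for $f_n$ (handled in the previous step) plus $f_n \to f$.

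The main subtlety I expect is controlling the difference quotient $\|(f - f_n) \circ \Fbt^{\Vb} - (f - f_n) \circ \Fb_r^{\Vb}\|_{L^1}/|t-r|$ uniformly in $t$, since naive use of the smooth identity would introduce $\|\grad f_n\|_{L^\infty}$, which blows up as $n \to \infty$. The standard remedy is to apply the representation $(f - f_n) \circ \Fbt^{\Vb}(\xb) - (f - f_n) \circ \Fb_r^{\Vb}(\xb) = \int_r^t \grad(f - f_n)(\Fb_s^{\Vb}(\xb))^{\top}\Vb(\xb)\,ds$, valid a.e.\ for $W^{1,1}$ functions via the ACL characterization along the smooth family of trajectories, and then to invoke Fubini together with the uniform change-of-variables bound. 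This gives an a priori $L^1$-bound of the difference quotient by $C\,\|\grad(f - f_n)\|_{L^1}$ uniformly in $t$, so the density limit $n \to \infty$ may be taken before $t \to r$, closing the argument.
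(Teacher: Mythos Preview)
Your approach is essentially the same as the paper's: reduce to the smooth case by density, use the fundamental theorem of calculus along the straight-line trajectories $s\mapsto \xb+s\Vb(\xb)$, and pass to the limit via dominated convergence. Two differences are worth noting.

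First, the paper additionally approximates $\Vb$ by vector fields in $C_c^1(\holdAll,\Rd)$ (Lemma~\ref{lemma:C_c_dense_C_0}). This is what allows it to use only Meyers--Serrin approximants $f_n\in C^\infty(\holdAll)\cap W^{1,1}(\holdAll)$: with $\supp(\Vb)$ compact in $\holdAll$, $\grad f_n$ is automatically bounded and uniformly continuous on the relevant set, so the dominated-convergence step goes through. By contrast, you take $f_n\in C^\infty(\overline{\holdAll})$, which Meyers--Serrin does \emph{not} give for a merely open bounded $\holdAll$; global smooth approximation up to the boundary requires e.g.\ a Lipschitz boundary, which is not assumed here. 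This is a small gap in your smooth-case setup, easily repaired by adopting the paper's extra approximation of $\Vb$.

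Second, you spell out the density passage more carefully than the paper does. The paper simply asserts that density reduces the claim to the smooth case, whereas you identify the genuine issue---controlling the difference quotient of $(f-f_n)\circ\Fbt^{\Vb}$ uniformly in $t$---and resolve it via an integral representation and Fubini. That extra care is a real improvement; note, however, that the representation you invoke can be obtained directly from the smooth case plus a limiting argument (both sides are continuous in the $W^{1,1}$-norm by your uniform change-of-variables bound), so the ACL machinery is not strictly necessary.
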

\begin{proof}
	See Appendix \ref{app:proof_frechet_diff}.
\end{proof}~\\
\textbf{Assumption (S0):}
\begin{itemize}
	\item For every $t \in [0, T]$, there exist unique solutions $\weakSol^t, \advar^t \in \testSpace(\completeDom)$ to the state equations \eqref{eq:state_1} and \eqref{eq:state_2}. Moreover, there are unique functions $\psi^t, \varphi^t$ that solve \eqref{eq:AAE1_nonlocal} and \eqref{eq:AAE2_nonlocal}, i.e., Assumption (D2) is fulfilled.
	\item There exists a constant $C_* > 0$ such that
	\begin{align}\label{eq:S0_coercivity}
		\varOp(t, \weakSol, \weakSol) \geq C_*||\weakSol||^2_{L^2(\completeDom)} \text{ for all } \weakSol \in \testSpace(\completeDom) \text{ and } t \in [0,T].
	\end{align}
\end{itemize}
\textbf{Assumption (S1):}\\
Each of the two kernel classes has to fulfill additional conditions:
\begin{itemize}
	\item For integrable kernels it is assumed to hold 
	\begin{align*}
		\kernel_{ij} \in W^{2,\infty}(\nlDom \times \nlDom,\R),~ \kernel_{i\nlBound} \in W^{2,\infty}((\nlDom \times \nlBound) \cup (\nlBound \times \nlDom),\R)  \text{ for all } i,j=1,2.
	\end{align*}
	\item For $n \in \Nbb$, define 
	\begin{align*}
		D^{\nlDom}_n &\defas \{(\xb,\yb) \in \nlDom \times \nlDom: ||\xb - \yb||_2 \geq \frac{1}{n} \} \text{ and}  \\ 
		D_{n}^{\nlBound} &\defas \{(\xb,\yb) \in (\nlDom \times \nlBound) \cup (\nlBound \times \nlDom): ||\xb - \yb||_2 \geq \frac{1}{n} \}.
	\end{align*}
 	Further, we set $D_n \defas D_n^{\nlDom} \cup D_n^{\nlBound}$. Then, for a singular symmetric kernel we assume ${\kernel_{ij} \in W^{2,\infty}(D_n^{\nlDom},\R)}$ and ${\kerneliI \in W^{2,\infty}(D_n^{\nlBound}, \R)}$. Additionally, we suppose that
	\begin{align*}
		&|\kernelij(\xb,\yb)|||\xb - \yb||_2^{d+2s} \in L^{\infty}(\nlDom \times \nlDom),~ |\kerneliI(\xb,\yb)|||\xb - \yb||_2^{d+2s} \in L^{\infty}((\nlDom \times \nlBound)\cup(\nlBound \times \nlDom)),\\
		&|\grad \kernelij(\xb,\yb)^{\top}\Vb(\xb,\yb)|||\xb - \yb||_2^{d+2s} \in L^{\infty}(\nlDom \times \nlDom), \\
		&|\grad_{\xb} \kerneliI(\xb,\yb)^{\top}\Vb(\xb)|||\xb - \yb||_2^{d+2s} \in L^{\infty}((\nlDom \times \nlBound)\cup(\nlBound \times \nlDom)),\\
		&|\Vb(\xb,\yb)^{\top}\hess(\kernelij)(\xb,\yb)\Wb(\xb,\yb)|||\xb - \yb||_2^{d+2s} \in L^{\infty}(\nlDom \times \nlDom) \text{ and}\\ &|\Vb(\xb)\frac{d^2}{d\xb d\xb} \kerneliI(\xb,\yb)\Wb(\xb)|||\xb - \yb||_2^{d+2s} \in L^{\infty}((\nlDom \times \nlBound)\cup(\nlBound \times \nlDom)), 
	\end{align*}
	hold for $i,j =1,2$ and for all $\Vb,\Wb \in C_0^1(\completeDom, \Rd)$.
\end{itemize}
As we will see in Lemma \ref{lemma:D1_fulfilled}, Assumption (S1) yields the well-posedness of certain integrals, which are components of the second shape derivative. Moreover, the fact that vector fields $\Vb$ and $\Wb$ vanish on $\nlBound$ is already incorporated into the conditions w.r.t. $\kerneliI$ in  (S1), i.e., we only have to differentiate regarding $\xb$. 
\begin{example}
\begin{sloppypar}
Let a symmetric function $\sigma_{\shape} \defas \sum_{i,j=1,2} \sigma_{ij} \ind_{\nlDom_i \times \nlDom_j} + \sum_{i=1,2} \sigma_{iI} \ind_{(\nlDom_i \times \nlBound) \cup (\nlBound \times \nlDom_i)}$ be given, where
$\sigma_{ij}:\Rd \times \Rd \rightarrow [0,\infty)$ and $\sigma_{iI}:\Rd \times \Rd \rightarrow [0,\infty)$ for $i,j=1,2$. Here, the functions $\sigma_{ii}$ as well as $\sigma_{i\nlBound}$ are supposed to be symmetric for $i=1,2$ and we consider ${\sigma_{12}(\xb,\yb)=\sigma_{21}(\yb,\xb)}$ to hold. Additionally, we assume the existence of constants ${0 < \sigma_* \leq \sigma^* < \infty}$ such that
${\sigma_* \leq \sigma_{ij} \leq \sigma^*}$ on $\nlDom \times \nlDom$ and ${\sigma_* \leq \sigma_{iI} \leq \sigma^*}$ on $(\nlDom \times \nlBound) \cup (\nlBound \times \nlDom)$ for all $i,j=1,2$. Then, a popular choice of a singular symmetric kernel is the function
\end{sloppypar}
\begin{align*}
	\kernel_{\shape}(\xb,\yb) \defas \frac{\sigma_{\shape}(\xb,\yb)}{||\xb-\yb||_2^{d+2s}}\ind_{B_{\delta}(\xb)}(\yb).
\end{align*}
Due to the boundedness of $\sigma_{ij}$ and $\sigma_{i\nlBound}$ from below and above as described before, Assumption (S0) is already fulfilled, since condition \eqref{ineq:sing_sym_kernel} also holds for perturbed domains $\nlDom(\shapetW) \cup \nlBound$.
We now show that Assumption (S1) is also valid, if $\sigma_{ij}, \sigma_{i\nlBound} \in W^{2,\infty}(\Rd \times \Rd ,\R)$. Therefore, we set $\widetilde{\kernel}(\xb,\yb) \defas \frac{1}{||\xb - \yb||_2^{d+2s}}$, $c_1 \defas -(d+2s)$ and $c_2 \defas 2(d+2s) + (d+2s)^2$. For $\widetilde{\kernel}$ we derive
\begin{align*}
	\frac{d}{d \xb} \widetilde{\kernel}(\xb,\yb) = c_1 \widetilde{\kernel}(\xb,\yb) \frac{\xb - \yb}{||\xb - \yb||_2^{2}} \text{ and } \frac{d}{d \yb} \widetilde{\kernel}(\xb,\yb) = - \frac{d}{d \xb} \widetilde{\kernel}(\xb,\yb). 
\end{align*}
Moreover, the second partial derivatives of $\widetilde{\kernel}$ can be expressed as
\begin{align*}
	&\frac{d^2}{d\xb d\xb} \widetilde{\kernel}(\xb,\yb) = c_1 \widetilde{\kernel}(\xb,\yb) \frac{\Id}{||\xb-\yb||_2^{2}} + c_2\widetilde{\kernel}(\xb,\yb)\frac{(\xb - \yb)(\xb - \yb)^{\top}}{||\xb -\yb||_2^4},\\
	&\frac{d^2}{d\xb d\yb} \widetilde{\kernel}(\xb,\yb)=\frac{d^2}{d\yb d\xb} \widetilde{\kernel}(\xb,\yb)=-\frac{d^2}{d\xb d\xb} \widetilde{\kernel}(\xb,\yb) \text{ and } \frac{d^2}{d\yb d\yb} \widetilde{\kernel}(\xb,\yb) = \frac{d^2}{d\xb d\xb} \widetilde{\kernel}(\xb,\yb).
\end{align*}
Since $\kernel_{ij} = \sigma_{ij} \widetilde{\kernel}$ and due to the Lipschitz continuity of $\Vb$ and $\Wb$, we can conclude
\begin{align*}
	&|\grad \kernelij(\xb,\yb)^{\top}\Vb(\xb,\yb)|||\xb - \yb||_2^{d+2s} \\
	&\leq |c_1\sigma_{ij}(\xb,\yb)\frac{(\xb-\yb)^\top(\Vb(\xb) - \Vb(\yb))}{||\xb-\yb||_2^2}| + |\grad \sigma_{ij}(\xb,\yb)^{\top}\Vb(\xb,\yb)| < C < \infty \text{ and} \\
	&|\Vb(\xb,\yb)^{\top}\hess(\kernelij)(\xb,\yb)\Wb(\xb,\yb)|||\xb - \yb||_2^{d+2s} \\
	&= |(\Vb(\xb) - \Vb(\yb))\Biggl(\sigma_{ij}(\xb,\yb)\left(c_1\frac{\Id}{||\xb - \yb||_2^2} + c_2\frac{(\xb-\yb)(\xb-\yb)^{\top}}{||\xb-\yb||_2^{4}}\right) + 2\grad \sigma_{ij}(\xb,\yb) \frac{\xb-\yb}{||\xb-\yb||_2^2} \\
	&\hspace{8em} + \hess(\sigma_{ij})(\xb,\yb)\Biggr)(\Wb(\xb) - \Wb(\yb))| < C < \infty \text{ for a.e. } \xb,\yb\in\nlDom,
\end{align*}
for some constant $C>0$. The remaining conditions on $\kernel_{i\nlBound}$ also hold, which can be proven analogously.
\end{example}
\begin{remark}\label{remark:sing_kernel_bounded_Dn}
	As a consequence of \cite[Lemma 6.13]{allaire_conception}, if $T>0$ is small enough, there exists a Lipschitz constant $L>0$ such that
	\begin{align*}
		||\Fbt^{-1}(\xb) - \Fbt^{-1}(\yb)||_2 \leq \frac{1}{L} ||\xb - \yb ||_2 \text{ for } \xb,\yb \in \completeDom \text{ and } t \in [0, T].
	\end{align*}
	Let $\kernel_{\shape}$ be a singular symmetric kernel with $|\kernel_{ij}(\xb,\yb)|||\xb-\yb||_2^{d+2s} < \kernel^*$ for $\xb,\yb \in \nlDom$ and $|\kernel_{i\nlBound}(\xb,\yb)|||\xb-\yb||_2^{d+2s} < \kernel^*$ for $(\xb,\yb) \in (\nlDom\times\nlBound)\cup(\nlBound\times\nlDom)$ for some constant $\kernel^*>0$. Thus, $\kernelt$ is also Lipschitz continuous on $D_n$, since
	\begin{align*}
		\kernelt(\xb,\yb) = \kernel_{\shapetW}(\Fbt(\xb),\Fbt(\yb)) \leq \frac{\kernel^*}{||\Fbt(\xb)-\Fbt(\yb)||_2^{d+2s}} \leq \frac{L\kernel^*}{||\xb-\yb||_2^{d+2s}} \leq n L \kernel^* \text{ for } (\xb,\yb) \in D_n.
	\end{align*}
As a result, for all $t \in [0,T]$ and $n \in \Nbb$ the corresponding $\kernelt$ is also bounded on $D_n$.
Therefore, if Assumption (S1) is fulfilled, it can be shown in similar way, that all weak first and second partial derivatives of $\kernelt$ are essentially bounded on $D_n$ for all $t \in [0,T]$ and $n \in \Nbb$. 
\end{remark}
\begin{lemma}
	\label{lemma:D1_fulfilled}
	If Assumption (S1) is fulfilled, then for any $\weakSol, \advar, \psi, \varphi \in \testSpace(\completeDom)$ and $t \in [0, T]$ the derivative $\partial_t \reallagrangian(t, \weakSol, \advar,\psi, \varphi)$ exists, i.e., (D1) is satisfied. 
\end{lemma}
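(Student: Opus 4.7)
The plan is to show that $\partial_t \reallagrangian(t, \weakSol, \advar, \psi, \varphi)$ exists by differentiating each of the seven summands that make up $\reallagrangian$ separately, namely $\objFunDer(t,\weakSol)$, $\varForceDer(t,\advar)$, $\varOpDer(t,\weakSol,\advar)$, $\varOp(t,\weakSol,\psi)$, $\varForce(t,\psi)$, $\varOp(t,\varphi,\advar)$, and $\secondVarForce(t,\weakSol,\varphi)$. For each piece I would first perform the change of variables $\xb \mapsto \Fbt^{\Wb}(\xb)$ (and analogously for $\yb$ in the double integrals) to pull the integrals over $\Fbt^{\Wb}(\nlDom)$ and $(\Fbt^{\Wb}(\completeDom))^2$ back to the fixed domains $\nlDom$ and $\completeDom^2$, producing Jacobian factors $\xt$ in each variable. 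After this reduction, the only $t$-dependent quantities inside the integrands are $\data^t, f^t, \Vbt, \kernelt, \grad\kernelt$, and $\xt$, and differentiation in $t$ can be justified by dominated convergence once an integrable majorant for the difference quotient is found.

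For the summands not containing the kernel ($\objFunDer, \varForceDer, \varForce, \secondVarForce$), the dominating functions come directly from the regularity assumptions: since $\data, f_1, f_2 \in H^2(\nlDom)$ and $\Vb, \Wb \in C_0^2$, Lemma \ref{lemma:frechet_diff_bounded_domain} yields differentiability of $\data^t, f^t, \Vbt$ in $L^1$, while $\xt$ is smooth in $t$ and bounded away from zero on $[0, T]$ for $T$ small. The resulting difference quotients are dominated by $L^2$-products of $\weakSol, \advar, \psi, \varphi$, which are integrable on the bounded domain. For $\varOp$ and $\varOpDer$ in the integrable-kernel case, the same argument applies directly, because Assumption (S1) ensures $\kernelij, \kerneliI \in W^{2,\infty}$ and hence $\kernelt$ together with $\grad\kernelt^\top\Vbt$ remain uniformly bounded on $[0,T]$.

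The main obstacle is the singular symmetric kernel case, where $\kernelt(\xb,\yb)$ grows like $\|\xb-\yb\|_2^{-(d+2s)}$ on the diagonal, so one has to verify that differentiating in $t$ does not worsen this singular order. Precisely this is what the bounds collected in (S1) are designed to guarantee: the quantities $|\grad\kernelij(\xb,\yb)^\top\Vb(\xb,\yb)|\cdot\|\xb-\yb\|_2^{d+2s}$ and $|\Vb(\xb,\yb)^\top\hess(\kernelij)(\xb,\yb)\Wb(\xb,\yb)|\cdot\|\xb-\yb\|_2^{d+2s}$ lie in $L^\infty$, and by Remark \ref{remark:sing_kernel_bounded_Dn} these bounds survive the pull-back uniformly in $t \in [0,T]$. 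Pairing such a singular-order-preserving factor with the Sobolev increments $(\weakSol(\xb)-\weakSol(\yb))(\psi(\xb)-\psi(\yb))$ (and its analogues) and using the norm equivalence $\testSpace(\completeDom) \cong H^s_c(\completeDom)$ of Remark \ref{remark:norm_equiv} produces a majorant whose integrability follows from the fractional Sobolev regularity of the test functions. Applying dominated convergence to each summand and summing the results then yields the existence of $\partial_t \reallagrangian$.
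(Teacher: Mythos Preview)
Your proposal is correct and follows essentially the same route as the paper: decompose $\reallagrangian$ into its seven summands, pull back to the fixed domain, and differentiate in $t$ using Lemma~\ref{lemma:frechet_diff_bounded_domain} together with the product rule, treating the integrable and singular kernel classes separately via Assumption~(S1). The paper delegates the three terms $\varOp(t,\weakSol,\psi)$, $\varOp(t,\varphi,\advar)$, $\varForce(t,\psi)$ to prior work and, for the singular case, makes the majorant argument concrete by restricting to the truncated sets $D_n$ and passing to the limit $n\to\infty$, but these are organizational details rather than a different strategy.
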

\begin{proof}
	For the proof of the differentiability of $\varOp(t,\weakSol,\psi)$, $\varOp(t,\varphi,\advar)$ and $\varForce(t,\psi)$ regarding $t$, we refer to \cite{shape_paper}. It remains to show that $\partial_t \secondVarForce(t,\weakSol,\varphi)$, $\partial_t \objFunDer(t, \weakSol)$, $\partial_t \varForceDer(t, \advar)$ and $\partial_t \varOpDer(t,\weakSol,\advar)$ exist. Because $\data \in H^2(\nlDom)$ and $\xt=\det D\FbtW$ is continuously differentiable as a composition of continuously differentiable functions, we get by applying Lemma \ref{lemma:frechet_diff_bounded_domain} and the product rule for Fr{\'e}chet derivatives that
	\begin{align*}
		\partial_t \secondVarForce(t, \weakSol,\varphi) = \int_{\nlDom} (\grad \data^t)^{\top} \Wb \varphi \xt - (\weakSol - \data^t) \varphi \left. \frac{d}{dr} \right|_{r=t} (\xi^r) ~d\xb.
	\end{align*} 
	Additionally, since $f^0 \in H^2(\nlDom)$ and $\Vb \in \vecfieldsspecific$, we can conclude by making use of Lemma \ref{lemma:frechet_diff_bounded_domain} and by applying the chain and product rule of differentiation that
	\begin{align*}
		\partial_t \objFunDer(t,\weakSol,\advar) &= \int_{\nlDom}  (\grad\data^t)^{\top}\Wb (\grad\data^t)^{\top}\Vbt \xt ~d\xb \\
		&- \int_{\nlDom} (\weakSol - \data^t)\left((\Vbt)^{\top} \hess(\data)^t\Wb\xt + (\grad \data^t)^{\top}D\Vb^t\Wb\xt + (\grad\data^t)^{\top}\Vbt \left. \frac{d}{dr} \right|_{r=t}(\xi^r) \right) ~d\xb \\
		&+ \int_{\nlDom}-(\weakSol - \data^t)(\grad\data^t)^{\top}\Wb \di \Vbt \xt \\
		&\quad\quad + \frac{1}{2} \left(\weakSol - \data^t\right)^2\left((\grad \di \Vbt)^{\top}\Wb \xt
		+ \di \Vbt \left. \frac{d}{dr} \right|_{r=t} \left( \xi^r \right)\right) ~d\xb \text{ and}\\
		\partial_t \varForceDer(t,\advar) &= \int_{\nlDom} (\Vbt)^{\top}\hess(f^t)\Wb \advar \xt + (\grad f^t)^{\top} D\Vb^t\Wb\advar\xt + (\grad f^t)^{\top}\Vbt \advar \left. \frac{d}{dr} \right|_{r=t} (\xi^r) ~d\xb\\
		&+\int_{\nlDom} (\grad f^t)^{\top} \Wb \advar \di \Vbt \xt + f^t\advar (\grad \di \Vbt)^{\top}\Wb \xt + f^t \advar \di \Vbt \left. \frac{d}{dr} \right|_{r=t} \left(\xi^r \right) ~d\xb. 
	\end{align*}
\begin{sloppypar}
\textbf{Integrable kernels:}\\
Since we assume $\kernel_{\shapetW} \in W^{2,\infty}((\completeDom)\times(\completeDom))$ as a consequence of Assumption (S1) and $\Vb \in \vecfieldsspecific$, we conclude that $\kernel_{\shapetW},\Psi_{\shapetW,\Vb}^1 \in W^{1,1}((\completeDom)^2)$ with ${\grad \Psi_{\shapetW,\Vb}^1=\hess(\kernel_{\shapetW})\Vb + (\grad\kernel_{\shapetW}^{\top} D\Vb)^{\top}}$. Additionally, $\xt$ and $\di \Vb$ are continuously differentiable and we get the weak derivative
\begin{align*}
	\grad \Psi_{\shapetW,\Vb}^2(\xb,\yb) = (\di \Vb(\xb) + \di \Vb(\yb))\grad\kernel_{\shapetW}(\xb,\yb) + \kernel_{\shapetW}(\xb,\yb)(\grad \di \Vb(\xb) + \grad \di \Vb(\yb)).
\end{align*}
\end{sloppypar}
Consequently, we derive by utilizing Lemma \ref{lemma:frechet_diff_bounded_domain} and the product rule for Fr{\'e}chet derivative that the partial derivative regarding the nonlocal operator can be expressed as
\begin{align*}
	\partial_t \varOpDer(t, \weakSol, \advar)& \\
	= \frac{1}{2} \iint_{(\completeDom)^2} &(\advar(\xb) - \advar(\yb))(\weakSol(\xb)\grad \Psi_1^t(\xb,\yb)^{\top}\Wb(\xb,\yb) - \weakSol(\yb)\grad \Psi_1^t(\yb,\xb)^{\top}\Wb(\yb,\xb))\xt(\xb)\xt(\yb) \\
	&+ (\advar(\xb) - \advar(\yb))(\weakSol(\xb) \Psi_1^t(\xb,\yb) - \weakSol(\yb)\Psi_1^t(\yb,\xb))\left. \frac{d}{dr} \right|_{r=t} (\xi^r(\xb) \xi^r(\yb)) ~d\yb d\xb\\
	+ \frac{1}{2} \iint_{(\completeDom)^2} &(\advar(\xb) - \advar(\yb))(\weakSol(\xb)\grad \Psi_2^t(\xb,\yb)^{\top}\Wb(\xb,\yb) - \weakSol(\yb)\grad \Psi_2^t(\yb,\xb)^{\top}\Wb(\yb,\xb))\xt(\xb)\xt(\yb) \\
	&+ (\advar(\xb) - \advar(\yb))(\weakSol(\xb)\Psi_2^t(\xb,\yb) - \weakSol(\yb)\Psi_2^t(\yb,\xb)) \left. \frac{d}{dr} \right|_{r=t} (\xi^r(\xb)\xi^r(\yb)) ~d\yb d\xb.
\end{align*}
\textbf{Singular symmetric kernels:}\\
As indicated in Remark \ref{remark:sing_kernel_bounded_Dn} Assumption (S1) yields $\Psi_{\shapetW,\Vb}^1,\Psi_{\shapetW,\Vb}^2 \in W^{1,\infty}(D_n)$ with weak derivatives 
\begin{align*}
	&\grad \Psi_{\shapetW,\Vb}^1(\xb,\yb)=\hess(\kernel_{\shapetW})(\xb,\yb)\Vb(\xb,\yb) + (\grad\kernel_{\shapetW}^{\top}(\xb,\yb) D\Vb(\xb,\yb))^{\top} \text{ and} \\
	&\grad \Psi_{\shapetW,\Vb}^2(\xb,\yb)=(\di \Vb(\xb) + \di \Vb(\yb))\grad\kernel_{\shapetW}(\xb,\yb) + \kernel_{\shapetW}(\xb,\yb)(\grad \di \Vb(\xb) + \grad \di \Vb(\yb)).
\end{align*}
 Then, by using Lemma \ref{lemma:frechet_diff_bounded_domain} and Assumption (S1) we get the differentiability of $\varOpDer$ regarding $t$ since
 \begin{align*}
 	\partial_t \varOpDer(t,\weakSol, \advar) = \lim_{n \rightarrow \infty} \frac{1}{2} \iint_{D_n} &(\advar(\xb) - \advar(\yb))(\weakSol(\xb) - \weakSol(\yb))\grad \Psi_1^t(\xb,\yb)^{\top}\Wb(\xb,\yb)\xt(\xb)\xt(\yb) \\
 	&+ (\advar(\xb) - \advar(\yb))(\weakSol(\xb) - \weakSol(\yb))\Psi_1^t(\xb,\yb) \left. \frac{d}{dr} \right|_{r=t} (\xi^r(\xb)\xi^r(\yb)) ~d\yb d\xb\\
 	+ \lim_{n \rightarrow \infty} \frac{1}{2} \iint_{D_n} &(\advar(\xb) - \advar(\yb))(\weakSol(\xb) - \weakSol(\yb))\grad \Psi_2^t(\xb,\yb)^{\top}\Wb(\xb,\yb)\xt(\xb)\xt(\yb)\\
 	&+(\advar(\xb) - \advar(\yb))(\weakSol(\xb) - \weakSol(\yb))\Psi_2^t(\xb,\yb)\left. \frac{d}{dr} \right|_{r=t} (\xi^r(\xb)\xi^r(\yb)) ~d\yb d\xb.
 \end{align*}
\end{proof}
\begin{lemma}\label{lemma:varOpDer_continuity}
	Let the Assumption (S1) be fulfilled.
	\begin{itemize}
		\item If $\kernel_{\shape}$ is an integrable kernel, then there exists a $C>0$ such that
		\begin{align*}
			|\varOpDer(t,\weakSol,\advar)| \leq C ||\weakSol||_{L^2(\completeDom)}||\advar||_{L^2(\completeDom)}. 
		\end{align*}
	\item If $\kernel_{\shape}$ is a singular symmetric kernel, then there exists a $C>0$ such that
	\begin{align*}
		|\varOpDer(t,\weakSol,\advar)| \leq C |\weakSol|_{H^s(\completeDom)}|\advar|_{H^s(\completeDom)}.
	\end{align*}
	\end{itemize}
\end{lemma}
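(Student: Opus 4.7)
My starting point in both cases is to perform the change of variables $\xb' = \FbtW(\xb)$, $\yb' = \FbtW(\yb)$ in the defining integrals of $\varOpDertW$, which is legitimate since $\FbtW$ is a bi-Lipschitz diffeomorphism for $T$ small. This transports every integral to the fixed reference domain $(\completeDom)^2$, introduces the uniformly bounded Jacobian $\xi^t(\xb)\xi^t(\yb)$, and replaces $\Vb$, $\kernel_{\shapetW}$ and $\Psi^i_{\shapetW,\Vb}$ by their pull-backs $\Vbt$, $\kernelt$ and $\Psi_i^t$. The resulting representation is exactly the one displayed in Lemma \ref{lemma:D1_fulfilled} (without the time derivative). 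From there the estimate reduces to a bound on $|\Psi_i^t|$ of the right order, followed by Cauchy--Schwarz and a nonlocal trace-type inequality adapted to each kernel class.

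\textbf{Integrable case.} Under (S1) we have $\kernel_{ij}, \kernel_{iI}\in W^{2,\infty}$, and $\Vb\in C_0^2$ is bounded with bounded derivatives, hence $|\Psi_1^t|\leq C$ and $|\Psi_2^t|\leq C$ essentially everywhere on $(\completeDom)^2$, uniformly in $t\in[0,T]$. Assumption (K1) restricts the support of $\kernelt(\xb,\cdot)$ (and thus of $\Psi_i^t(\xb,\cdot)$) to $B_{\delta'}(\xb)$ for some $\delta'>0$ independent of $t$, so $\sup_{\xb}\int_{\completeDom}|\Psi_i^t(\xb,\yb)|\,d\yb$ is bounded by a constant $M$. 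I would then split
\[
\tfrac12\iint (\advar(\xb)-\advar(\yb))\bigl(\weakSol(\xb)\Psi_i^t(\xb,\yb)-\weakSol(\yb)\Psi_i^t(\yb,\xb)\bigr)\xi^t(\xb)\xi^t(\yb)\,d\yb d\xb
\]
into four summands and control each one by Cauchy--Schwarz in the spirit of the standard $L^2$-continuity argument for the nonlocal bilinear form in \cite{DuAnalysis}; e.g.
$\iint|\advar(\xb)\weakSol(\yb)|\,|\Psi_i^t(\yb,\xb)|d\yb d\xb\leq\bigl(\iint\advar(\xb)^2|\Psi_i^t(\yb,\xb)|\bigr)^{1/2}\bigl(\iint\weakSol(\yb)^2|\Psi_i^t(\yb,\xb)|\bigr)^{1/2}\leq M\,\|\advar\|_{L^2}\|\weakSol\|_{L^2}$. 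Collecting the four terms yields the stated $L^2$-$L^2$ bound.

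\textbf{Singular symmetric case.} Here $\kernel_{\shapetW}$ is symmetric, and a short calculation (using $\grad_{\xb}\kernel(\xb,\yb)=\grad_{\yb}\kernel(\yb,\xb)$ and symmetry of $\di\Vbt(\xb)+\di\Vbt(\yb)$) shows that both $\Psi_1^t$ and $\Psi_2^t$ are symmetric in their two arguments. Using this symmetry I can rewrite the integrand as
\[
\tfrac12(\advar(\xb)-\advar(\yb))(\weakSol(\xb)-\weakSol(\yb))\,\Psi_i^t(\xb,\yb)\,\xi^t(\xb)\xi^t(\yb),
\]
mirroring the reformulation in Section~2. The last two lines of (S1) (combined with Remark \ref{remark:sing_kernel_bounded_Dn} to transfer the bounds to the pulled-back kernel $\kernelt$) give
$|\Psi_i^t(\xb,\yb)|\leq C\|\xb-\yb\|_2^{-(d+2s)}$ uniformly in $t\in[0,T]$ on $D_n$, hence almost everywhere. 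Applying Cauchy--Schwarz with the weight $\|\xb-\yb\|_2^{-(d+2s)}$ then yields
\[
|\varOpDer(t,\weakSol,\advar)|\leq C\Bigl(\iint\tfrac{(\weakSol(\xb)-\weakSol(\yb))^2}{\|\xb-\yb\|_2^{d+2s}}\,d\yb d\xb\Bigr)^{1/2}\Bigl(\iint\tfrac{(\advar(\xb)-\advar(\yb))^2}{\|\xb-\yb\|_2^{d+2s}}\,d\yb d\xb\Bigr)^{1/2}=C|\weakSol|_{H^s}|\advar|_{H^s}.
\]

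\textbf{Main obstacle.} The delicate point is the singular case: establishing the bound $|\Psi_i^t(\xb,\yb)|\,\|\xb-\yb\|_2^{d+2s}\leq C$ uniformly in $t$. It combines the bi-Lipschitz estimate $\|\FbtW(\xb)-\FbtW(\yb)\|_2\geq L\|\xb-\yb\|_2$ from Remark \ref{remark:sing_kernel_bounded_Dn}, the chain-rule expressions for $\grad\kernelt$ and $\grad\Psi_i^t$, and the $L^\infty$-conditions in (S1). Once this pointwise control is in place, the Cauchy--Schwarz step is routine; without it one cannot pass the singular weight onto the two $H^s$-seminorm factors.
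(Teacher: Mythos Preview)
Your proposal is correct and follows essentially the same route as the paper's proof: reduce to the pulled-back representation with $\Psi_i^t$ and $\xi^t$, establish uniform-in-$t$ pointwise bounds on $|\Psi_i^t|$ (a constant in the integrable case, the singular weight $C\|\xb-\yb\|_2^{-(d+2s)}$ in the symmetric case via Remark~\ref{remark:sing_kernel_bounded_Dn} and (S1)), and conclude by Cauchy--Schwarz. The only cosmetic difference is that in the integrable case the paper simply uses $|\Psi_i^t\xi^t\xi^t|\leq\widetilde C$ together with the finite measure of $(\completeDom)^2$, whereas you invoke the truncation from (K1) to bound $\sup_{\xb}\int|\Psi_i^t(\xb,\yb)|\,d\yb$; either argument yields the stated $L^2$--$L^2$ estimate.
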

\begin{proof}
	By utilizing the triangle inequality, we can directly deduce that
	\begin{align*}
		&|\varOpDer(t,\weakSol,\advar)| 
		\leq \sum_{i=1,2} |\iint_{(\completeDom)^2} (\advar(\xb) - \advar(\yb))(\weakSol(\xb)\Psi_i^t(\xb,\yb) - \weakSol(\yb)\Psi_i^t(\yb,\xb))\xt(\xb)\xt(\yb)~d\yb d\xb|.
	\end{align*}
\textbf{Integrable kernel:}\\
First, we denote that $\xt$ and $\di \Vbt$ are continuous on $\completeDom$ and therefore $$\Psi_2^t(\xb,\yb)\xt(\xb)\xt(\yb) =\kernelt(\xb,\yb)(\di \Vbt(\xb) + \di \Vbt(\yb))\xt(\xb)\xt(\yb)$$ is bounded by a constant $\widetilde{C}_1>0$ for all $t \in [0,T]$ due to Assumption (S1).
Then, we can conclude the existence of a $C_1>0$ such that for the term regarding $\Psi_2$ holds
\begin{align*}
	&|\iint_{(\completeDom)^2} (\advar(\xb) - \advar(\yb))(\weakSol(\xb)\Psi_2^t(\xb,\yb) - \weakSol(\yb)\Psi_2^t(\yb,\xb))\xt(\xb)\xt(\yb)~d\yb d\xb|\\
	&\leq \widetilde{C}_1 \iint_{(\completeDom)^2} |(\advar(\xb) - \advar(\yb))\weakSol(\xb)| + |(\advar(\xb) - \advar(\yb)) \weakSol(\yb)|~d\yb d\xb 
\leq C_1 ||\weakSol||_{L^2(\completeDom)}||\advar||_{L^2(\completeDom)}.
\end{align*}
Since also $\grad \kernel^t$ is essentially bounded for all $t \in [0,T]$, the term $$\Psi_1^t(\xb,\yb)\xt(\xb)\xt(\yb)=\grad \kernelt(\xb,\yb)^{\top}\Vbt(\xb,\yb)\xt(\xb)\xt(\yb)$$ is essentially bounded  on $(\completeDom)^2$ by a constant $\widetilde{C}_2>0$ for all $t \in [0,T]$. Consequently, we get for some $C_2 > 0$ that
\begin{align*}
	&|\iint_{(\completeDom)^2} (\advar(\xb) - \advar(\yb))(\weakSol(\xb)\Psi_1^t(\xb,\yb) - \weakSol(\yb)\Psi_1^t(\yb,\xb))\xt(\xb)\xt(\yb)~d\yb d\xb| \\
	&\leq \iint_{(\completeDom)^2} \widetilde{C}_2|(\advar(\xb) - \advar(\yb))\weakSol(\xb)| + \widetilde{C}_2|(\advar(\xb) - \advar(\yb))\weakSol(\yb)| ~d\yb d\xb 
	\leq C_2||\weakSol||_{L^2(\completeDom)} ||\advar||_{L^2(\completeDom)}.
\end{align*}
\textbf{Singular symmetric kernel:}\\
Here, we can directly conclude the existence of a constant $C_1>0$ with
\begin{align*}
	|\Psi_2^t(\xb,\yb)\xt(\xb)\xt(\yb)| \leq \frac{C_1}{||\xb-\yb||_2^{d+2s}} \ind_{B_{\delta}(\xb)}(\yb) =: \widetilde{\kernel}(\xb,\yb)\text{ for a. e. } (\xb,\yb) \in (\completeDom)^2
\end{align*}
and for all $t \in [0,T]$ due to Assumption (S1) and the essential boundedness of $\Vb$, $\di \Vb$ and $\xt$ on $\completeDom$. Therefore, we derive
\begin{align*}
	&|\iint_{(\completeDom)^2} (\advar(\xb) - \advar(\yb))(\weakSol(\xb) - \weakSol(\yb))\Psi_2^t(\xb,\yb)\xt(\xb)\xt(\yb) ~d\yb d\xb|\\
	&\leq \iint_{(\completeDom)^2} |(\advar(\xb) - \advar(\yb))(\weakSol(\xb) - \weakSol(\yb))| \widetilde{\kernel}(\xb,\yb) ~d\yb d\xb \\
	&\leq \left( \iint_{(\completeDom)^2} (\advar(\xb)-\advar(\yb))^2\widetilde{\kernel}(\xb,\yb) ~d\yb d\xb \right)^{\frac{1}{2}} \left(\iint_{(\completeDom)^2} (\weakSol(\xb)-\weakSol(\yb))^2\widetilde{\kernel}(\xb,\yb)~d\yb d\xb \right)^{\frac{1}{2}} \\
	&\leq C_1 |\weakSol|_{H^s(\completeDom)} |\advar|_{H^s(\completeDom)},
\end{align*}
\begin{sloppypar}
where we obtain the last step by dropping $\ind_{B_{\delta}(\xb)}$.
From Assumption (S1) follows the existence of a constant $C_2>0$ with ${|\Psi_1^t(\xb,\yb)\xt(\xb)\xt(\yb)|\leq \frac{C_2}{||\xb-\yb||_2^{d+2s}}\ind_{B_{\delta}(\xb)}(\yb) =: \overline{\kernel}(\xb,\yb)}$ for a.e. ${(\xb,\yb) \in (\completeDom)^2}$ and for all $t \in [0,T]$. Accordingly, we derive
\end{sloppypar}
\begin{align*}
	&|\iint_{(\completeDom)^2} (\advar(\xb) - \advar(\yb))(\weakSol(\xb) - \weakSol(\yb))\Psi_1^t(\xb,\yb)\xt(\xb)\xt(\yb)~d\yb d\xb| \\
	&\leq \left(\iint_{(\completeDom)^2} (\advar(\xb) - \advar(\yb))^2 \overline{\kernel}(\xb,\yb) ~d\yb d\xb\right)^{\frac{1}{2}} \left( \iint_{(\completeDom)^2} (\weakSol(\xb) - \weakSol(\yb))^2\overline{\kernel}(\xb,\yb) ~d\yb d\xb\right)^{\frac{1}{2}}\\
	&\leq C_2 |\weakSol|_{H^s(\completeDom)} |\advar|_{H^s(\completeDom)},
\end{align*}
where we again omitted the truncation $\ind_{B_{\delta}(\xb)}$ in the last step.
\end{proof}
\begin{lemma}[After {\cite[Chapter 10.2.4 Lemma 2.1]{shapes_geometries}}]
	\label{lemma:l2_convergence}
	Let $\holdAll \subset \Rd$ be a bounded and open domain with nonzero measure, $\Vb \in C_0^1(\holdAll,\Rd)$ and $n \in \Nbb$. Then, for $g \in L^2(\holdAll,\R^n)$, we get
	\begin{align*}
		||g \circ \FbtV - g||_{L^2(\holdAll,\R^n)} \rightarrow 0 \text{ and } ||g \circ (\FbtV)^{-1} - g||_{L^2(\holdAll,\R^n)} \rightarrow 0 \quad \text{for } t \searrow 0.
	\end{align*}
\end{lemma}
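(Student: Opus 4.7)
The plan is a standard density argument: approximate $g \in L^2(\holdAll,\R^n)$ by continuous compactly supported functions, handle the continuous case by uniform continuity, and control the approximation error by a change-of-variables estimate with a Jacobian that stays uniformly bounded in $t$.

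First I would fix $\epsilon > 0$ and choose $h \in C_c^0(\holdAll,\R^n)$ with $\|g-h\|_{L^2(\holdAll,\R^n)} < \epsilon$, which is possible since $C_c^0$ is dense in $L^2$. By the triangle inequality,
\begin{align*}
\|g \circ \FbtV - g\|_{L^2(\holdAll,\R^n)} \leq \|(g-h)\circ \FbtV\|_{L^2(\holdAll,\R^n)} + \|h\circ \FbtV - h\|_{L^2(\holdAll,\R^n)} + \|h-g\|_{L^2(\holdAll,\R^n)}.
\end{align*}
For the second term, since $h$ is uniformly continuous with compact support and $\FbtV \to \Id$ uniformly on $\overline{\holdAll}$ as $t \searrow 0$ (because $\|t\Vb\|_{L^\infty} \to 0$), we have $h\circ\FbtV \to h$ uniformly; together with the fact that the supports $\FbtV^{-1}(\supp h)$ stay inside a fixed compact subset of $\holdAll$ for small $t$, this gives $L^2$-convergence.

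For the first term I would apply the change of variables $\xb = \FbtV(\yb)$ to obtain
\begin{align*}
\|(g-h)\circ \FbtV\|_{L^2(\holdAll,\R^n)}^2 = \int_{\holdAll} |(g-h)(\zb)|^2\, |\det D(\FbtV)^{-1}(\zb)|\, d\zb.
\end{align*}
The key quantitative input is that, for $T$ chosen small enough, $\det D\FbtV$ is continuous in $(t,\xb)$ with $\det D\Ftzero \equiv 1$, so there exist constants $0 < c \leq C$ with $c \leq |\det D\FbtV(\xb)| \leq C$ uniformly in $t \in [0,T]$ and $\xb \in \overline{\holdAll}$; the same bounds apply to the inverse Jacobian. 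Hence $\|(g-h)\circ \FbtV\|_{L^2}^2 \leq c^{-1} \|g-h\|_{L^2}^2 < c^{-1}\epsilon^2$, which is uniform in $t$. Combining the three estimates and letting $t \searrow 0$ yields $\limsup_{t \searrow 0} \|g\circ\FbtV - g\|_{L^2} \leq (1+c^{-1/2})\epsilon$, and since $\epsilon$ was arbitrary, the first claim follows.

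For the second claim involving $(\FbtV)^{-1}$, the argument is entirely analogous: $(\FbtV)^{-1}$ is also Lipschitz and converges uniformly to $\Id$ as $t \searrow 0$ (by Lemma~\ref{lemma:frechet_diff_bounded_domain} and the implicit function theorem, or directly from $\|(\FbtV)^{-1} - \Id\|_{L^\infty} = \|\FbtV - \Id\|_{L^\infty} = O(t)$), and the change of variables now produces the forward Jacobian $|\det D\FbtV|$, which is also uniformly bounded above and below. The main obstacle is really only the bookkeeping around the uniform Jacobian bounds; once those are in hand, the density-plus-uniform-continuity scheme goes through routinely.
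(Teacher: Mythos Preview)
Your argument is correct and is essentially the standard density-plus-uniform-continuity proof that the paper is implicitly invoking: the paper's own proof simply extends $g$ and $\Vb$ by zero to $\Rd$, cites \cite[Chapter 10.2.4, Lemma 2.1]{shapes_geometries} for the scalar case $n=1$, and remarks that $n\geq 2$ follows componentwise. What you have written is a self-contained version of that very argument (approximation by $C_c$-functions, uniform Jacobian bounds, change of variables), so there is no substantive difference in approach---you just spell out what the paper delegates to the reference. One minor remark: the appeal to Lemma~\ref{lemma:frechet_diff_bounded_domain} for the inverse map is misplaced (that lemma concerns $L^1$-differentiability), but your alternative ``or directly from $\|(\FbtV)^{-1}-\Id\|_{L^\infty}=\|t\Vb\|_{L^\infty}$'' is the right justification and suffices.
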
 
\begin{proof}
\begin{sloppypar}
Since $g$ and $\Vb$ can be extended by zero to functions in $L^2(\Rd,\R^n)$ and ${C_c^1(\Rd,\R^n)\defas\{\Vb \in C_c^1(\Rd,\R^n): \supp (\Vb) \text{ compact in } \Rd \}}$, respectively, we refer for the case $n=1$ to the proof of \cite[Chapter 10.2.4 Lemma 2.1]{shapes_geometries}. Then, the cases $n \in \Nbb$ with $n \geq 2$ are a direct consequence.
\end{sloppypar} 
\end{proof}
Before we prove that the conditions of (D3) are met under some natural assumptions, we need the following weak convergence results. 
\begin{lemma}\label{lemma:weak_conv} 
Suppose that Assumption (S0) and (S1) hold. Then, for $t \searrow 0$ we get
	\begin{align*}
		\weakSol^t \rightharpoonup \weakSol^0, \advar^t \rightharpoonup \advar^0, \psi^t \rightharpoonup \psi^0 \text{ and } \varphi^t \rightharpoonup \varphi^0.
	\end{align*} 
\end{lemma}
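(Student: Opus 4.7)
The plan is to run the standard weak-compactness + passage-to-limit + uniqueness recipe. Let $H$ denote the Hilbert space on which $\varOp(0,\cdot,\cdot)$ is coercive: by Remark \ref{remark:norm_equiv} this is $(L^2_c(\completeDom),\|\cdot\|_{L^2})$ for integrable kernels and $(H^s_c(\completeDom),|\cdot|_{H^s})$ for singular symmetric ones. Since the weak convergence of a one-parameter family is equivalent to every sequence admitting a subsequence with the claimed weak limit, it suffices to show that for any $t_n\searrow 0$ one can extract a subsequence along which each of $\weakSol^{t_n},\advar^{t_n},\psi^{t_n},\varphi^{t_n}$ converges weakly in $H$ to the corresponding solution at $t=0$.

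\textbf{Step 1 (uniform bounds).} First I would test \eqref{eq:state_1} with $\psi=\weakSol^t$ and combine Assumption (S0) with $|\varForce(t,\weakSol^t)|\le C\|\weakSol^t\|_{L^2}$ (valid because $f^t\xt$ is uniformly $L^2$-bounded) to obtain $\|\weakSol^t\|_{L^2}\le C'$ and hence $|||\weakSol^t|||\le C''$, i.e.\ a uniform $H$-bound via Remark \ref{remark:norm_equiv}. The analogous test of \eqref{eq:state_2} with $\advar^t$ bounds $\advar^t$ in $H$. Testing \eqref{eq:AAE2_nonlocal} with $\varphi^t$ and invoking Lemma \ref{lemma:varOpDer_continuity} (together with the obvious continuity bound on $\varForceDer$) controls $\varphi^t$; finally, testing \eqref{eq:AAE1_nonlocal} with $\psi^t$ and using Lemma \ref{lemma:varOpDer_continuity} along with the bounds already in hand for $\weakSol^t,\advar^t,\varphi^t$ bounds $\psi^t$.

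\textbf{Step 2 (passing to the limit).} Reflexivity of $H$ yields weakly convergent subsequences with limits $\weakSol^*,\advar^*,\psi^*,\varphi^*$. For a fixed test function $\zeta\in\testSpace(\completeDom)$ I would split $\varOp(t,\weakSol^t,\zeta)-\varOp(0,\weakSol^*,\zeta)=\varOp(0,\weakSol^t-\weakSol^*,\zeta)+[\varOp(t,\weakSol^t,\zeta)-\varOp(0,\weakSol^t,\zeta)]$. The first summand vanishes by weak convergence and $H$-continuity of $\varOp(0,\cdot,\zeta)$; the second is the pairing of the uniformly bounded $\weakSol^t$ with the operator whose integral kernel is $\kernelt(\xb,\yb)\xt(\xb)\xt(\yb)-\kernel_{\shape}(\xb,\yb)$, and Assumption (S1) together with Lemma \ref{lemma:l2_convergence} (applied to $f^t,\data^t,\xt$) shows that this operator tends to $0$ in the corresponding operator norm on $H$. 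The same splitting handles $\varForce(t,\cdot)$, $\varForceDer(t,\cdot)$, $\varOpDer(t,\cdot,\cdot)$, $d_{\weakSol}\objFunDer$ and $d_{\weakSol}\secondVarForce$, and the weak convergences of $\weakSol^t,\advar^t,\varphi^t$ let me handle the remaining nonlocal contributions to the adjoint equations because they enter linearly in the unknown.

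\textbf{Step 3 and main obstacle.} Passage to the limit shows that $(\weakSol^*,\advar^*,\psi^*,\varphi^*)$ satisfies the $t=0$ versions of \eqref{eq:state_1}, \eqref{eq:state_2}, \eqref{eq:AAE1_nonlocal} and \eqref{eq:AAE2_nonlocal}; uniqueness from Assumption (D2) forces equality with $\weakSol^0,\advar^0,\psi^0,\varphi^0$, and since the extracted subsequence was arbitrary the full families converge. The main technical difficulty is the operator-norm smallness of $\varOp(t,\cdot,\zeta)-\varOp(0,\cdot,\zeta)$, and of the analogous difference for $\varOpDer$, in the singular symmetric case, because $\kernelt-\kernel_{\shape}$ is not pointwise small near the diagonal. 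The resolution is a two-scale split of the double integral into $D_n$ (where Remark \ref{remark:sing_kernel_bounded_Dn} and the Lipschitz continuity of $\Fbt$ yield uniform $L^\infty$ control on the $t$-derivative of the kernel and hence Lipschitz-in-$t$ control of the difference) and its complement (where the $\|\xb-\yb\|_2^{-d-2s}$-majorants from (S1), paired with the uniform $H^s$-bound on $\weakSol^t$, give a diagonal tail that is small uniformly in $t$ once $n$ is large). The same two-scale estimate, this time exploiting the extra bounds on $\grad\kernelij$ and $\hess(\kernelij)$ contained in (S1), handles the differences arising in $\varOpDer$.
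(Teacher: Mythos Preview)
Your proposal is correct and follows the same architecture as the paper's proof: uniform $H$-bounds from coercivity (S0) and Lemma \ref{lemma:varOpDer_continuity}, weak compactness, passage to the limit in each of \eqref{eq:state_1}--\eqref{eq:AAE2_nonlocal}, and identification of the limit by uniqueness (D2).

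The only noteworthy difference is in how the limit is taken in the nonlocal terms. The paper does not argue operator-norm smallness of $\varOp(t,\cdot,\zeta)-\varOp(0,\cdot,\zeta)$ and $\varOpDer(t,\cdot,\cdot)-\varOpDer(0,\cdot,\cdot)$; instead it passes from $L^2$-convergence of $\Psi_i^t\xt(\xb)\xt(\yb)$ (via Lemma \ref{lemma:l2_convergence}) to an a.e.\ convergent subsequence and then applies dominated convergence, working on $D_n$ and letting $n\to\infty$ in the singular case. Your route is a direct quantitative version of the same idea: on $D_n$ you exploit the Lipschitz-in-$t$ control on $\kernelt$ coming from Remark \ref{remark:sing_kernel_bounded_Dn} and (S1), and on the complement you use that the $H^s$-tail of the \emph{fixed} test function $\zeta$ (or $\tilde\weakSol$) is small, paired with the uniform $H^s$-bound on the weakly convergent family. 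Both arguments rest on the same ingredients and the same $D_n$/diagonal split; yours avoids the subsequence extraction at the price of making the uniform-in-$t$ tail estimate explicit, while the paper's DCT argument is slightly more elementary but requires the extra subsequence bookkeeping. Note also that in \eqref{eq:AAE2_nonlocal} the paper only needs convergence of $\varOpDer(t,\weakSol^0,\widetilde\advar)$ with $\weakSol^0$ \emph{fixed}, so operator-norm smallness is genuinely needed only for the term $\varOpDer(t,\widetilde\weakSol,\advar^t)$ in \eqref{eq:AAE1_nonlocal}, where the paper instead invokes the weak--strong pairing $\advar^t\rightharpoonup\advar^0$ combined with strong convergence of the kernel factor.
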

\begin{proof}
	Due to its length, the proof has been moved to Appendix \ref{app:weak_conv_proof}.
\end{proof}
\begin{lemma}\label{lemma:D3_holds}
	If Assumptions (S0) and (S1) are satisfied, Assumption (D3) of the AAM holds.
\end{lemma}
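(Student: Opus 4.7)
The plan is to combine the weak convergences $\psi^{s_n} \rightharpoonup \psi^0$ and $\varphi^{s_n} \rightharpoonup \varphi^0$ in $\testSpace(\completeDom)$ furnished by Lemma \ref{lemma:weak_conv} with strong continuity in $t$ of the $\psi$- and $\varphi$-coefficients of $\partial_t \reallagrangian$. Given a sequence $s_n \searrow 0$, the weakly convergent sequences $\{\psi^{s_n}\}$ and $\{\varphi^{s_n}\}$ are in particular bounded in $\testSpace(\completeDom)$. In the singular symmetric kernel case, Remark \ref{remark:norm_equiv} identifies $\testSpace(\completeDom)$ with $H_c^s(\completeDom)$, and the compact embedding $H_c^s(\completeDom) \hookrightarrow L_c^2(\completeDom)$ allows us to pass to a subsequence $(s_{n_k})$ with $\psi^{s_{n_k}} \to \psi^0$ and $\varphi^{s_{n_k}} \to \varphi^0$ strongly in $L_c^2(\completeDom)$; in the integrable kernel case the two norms are already equivalent, so no further extraction is needed.

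I would then split
\begin{align*}
	\partial_t \reallagrangian(t, \weakSol^0, \advar^0, \psi, \varphi) = G(t) + \ell_t(\psi) + m_t(\varphi),
\end{align*}
where $G(t) \defas \partial_t \objFunDer(t, \weakSol^0) - \partial_t \varForceDer(t, \advar^0) + \partial_t \varOpDer(t, \weakSol^0, \advar^0)$ is independent of $(\psi, \varphi)$, while $\ell_t(\psi) \defas \partial_t \varOp(t, \weakSol^0, \psi) - \partial_t \varForce(t, \psi)$ and $m_t(\varphi) \defas \partial_t \varOp(t, \varphi, \advar^0) - \partial_t \secondVarForce(t, \weakSol^0, \varphi)$ are linear in $\psi$ and $\varphi$ respectively. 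From the explicit integral representations obtained in the proof of Lemma \ref{lemma:D1_fulfilled}, together with the uniform bounds supplied by Assumption (S1) and Remark \ref{remark:sing_kernel_bounded_Dn}, Lebesgue's dominated convergence theorem yields $G(t) \to G(0)$ as $t \searrow 0$.

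The decisive step is then upgrading the continuity of the coefficients of $\psi$ and $\varphi$ to strong dual convergence
\begin{align*}
	\|\ell_t - \ell_0\|_{\testSpace(\completeDom)^{*}} \to 0 \quad \text{and} \quad \|m_t - m_0\|_{\testSpace(\completeDom)^{*}} \to 0 \text{ as } t \searrow 0.
\end{align*}
The $t$-dependence in $\ell_t$ and $m_t$ enters only through $\data^t, f^t, \Vb^t, \xt$ and $\kernelt$ together with its weak first derivatives, all of which converge in the relevant Lebesgue spaces by Lemma \ref{lemma:frechet_diff_bounded_domain} and standard dominated convergence arguments using Assumption (S1). Feeding these convergences into the bilinear estimates of Lemma \ref{lemma:varOpDer_continuity}-type gives the claimed operator-norm convergence. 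With this in hand, the standard splitting
\begin{align*}
	\ell_t(\psi^{s_{n_k}}) - \ell_0(\psi^0) = (\ell_t - \ell_0)(\psi^{s_{n_k}}) + \ell_0(\psi^{s_{n_k}} - \psi^0)
\end{align*}
closes the argument: the first summand vanishes by dual-norm convergence combined with boundedness of $\{\psi^{s_{n_k}}\}$, and the second by weak convergence $\psi^{s_{n_k}} \rightharpoonup \psi^0$; the analogous manipulation handles $m_t(\varphi^{s_{n_k}})$, and adding the three limits yields (D3).

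The main technical obstacle I anticipate is proving the operator-norm convergence $\ell_t \to \ell_0$ in the singular symmetric kernel case, since there the coefficient $\partial_t\bigl(\kernelt(\xb,\yb)\xt(\xb)\xt(\yb)\bigr)$ inherits the singularity $\|\xb - \yb\|_2^{-d-2s}$ and is controlled only pointwise via Assumption (S1). The bilinear estimate from Lemma \ref{lemma:varOpDer_continuity} supplies the correct structural inequality, and the remaining work consists of a careful dominated-convergence argument on $(\completeDom)^2 \setminus \{\xb = \yb\}$, together with the Lipschitz-type bound from Remark \ref{remark:sing_kernel_bounded_Dn}, that transfers pointwise convergence of the kernel quantities into convergence of the associated operator norms.
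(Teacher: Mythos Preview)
Your proposal is correct and follows the same route as the paper: decompose $\partial_t \reallagrangian$ into a $(\psi,\varphi)$-independent part handled by dominated convergence and linear functionals $\ell_t, m_t$ handled by combining strong convergence (in $t$) of their coefficients with the weak convergences $\psi^{s_n}\rightharpoonup\psi^0$, $\varphi^{s_n}\rightharpoonup\varphi^0$ supplied by Lemma~\ref{lemma:weak_conv}. Your operator-norm formulation and splitting $(\ell_t-\ell_0)(\psi^{s_{n_k}})+\ell_0(\psi^{s_{n_k}}-\psi^0)$ is precisely the paper's weak--strong pairing observation $\int g^t h^{t_k}\to\int g^0 h^0$ (strong $g^t\to g^0$, weak $h^{t_k}\rightharpoonup h^0$); note that the compact-embedding step in your first paragraph is never actually used in the argument you go on to give and can be dropped.
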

\begin{proof}
	Since the proof is quite lengthy, it can be found in Appendix \ref{app:D3_proof}.
\end{proof}
Note that $(\grad \di \Vb)^{\top}\Wb = \di (D\Vb \Wb) - \trace(D\Vb D\Wb)$ (see, e.g., \cite{linear_view}). Additionally, we decompose the terms $\grad (\Psi_{\shape,\Vb}^i(\xb,\yb))^{\top}\Wb(\xb,\yb)$ in two parts each, such that we can express $D_{\shape}^2\redFun(\shape)[\Vb,\Wb]$ according to \eqref{eq:structure_second_shape}. Therefore, we set
\begin{align}
	\begin{split}\label{def:grad_decomposition}
	T_1^1(\xb,\yb) =& \Vb(\xb,\yb)^{\top}\hess(\kernel_{\shape})(\xb,\yb)\Wb(\xb,\yb),\quad  T_2^1(\xb,\yb) = \grad\kernel_{\shape}^{\top}(\xb,\yb) D\Vb(\xb,\yb)\Wb(\xb,\yb), \\
	T_1^2(\xb,\yb) =& (\di \Vb(\xb) + \di \Vb(\yb))\grad\kernel_{\shape}(\xb,\yb)^{\top}\Wb(\xb,\yb) \\
	&- \kernel_{\shape}(\xb,\yb)(\trace(D\Vb(\xb)D\Wb(\xb)) + \trace (D\Vb(\yb)D\Wb(\yb))) \text{ and} \\
	T_2^2(\xb,\yb) =& \kernel_{\shape}(\xb,\yb)(\di(D\Vb(\xb)\Wb(\xb)) + \di (D\Vb(\yb)\Wb(\yb))).
	\end{split}
\end{align}
Consequently, we derive
\begin{align*}
	(\grad \Psi_{\shape,\Vb}^1(\xb,\yb))^{\top}\Wb(\xb,\yb) = T_1^1(\xb,\yb) + T_2^1(\xb,\yb) \text{ and } (\grad \Psi_{\shape,\Vb}^2(\xb,\yb))^{\top}\Wb(\xb,\yb) = T_1^2(\xb,\yb) + T_2^2(\xb,\yb).
\end{align*}
\begin{corollary}
	Let the functions $\weakSol^0,\advar^0$ solve \eqref{eq:state_1} and \eqref{eq:state_2}. Further, suppose that the functions $\psi^0,\varphi^0$ are the solutions to \eqref{eq:AAE1_nonlocal} and \eqref{eq:AAE2_nonlocal}. Additionally, the functions $\Psi_{\shape,\Vb}^1$ and $\Psi_{\shape,\Vb}^2$ are defined as in \eqref{defs:Phis} and their gradient terms can be written as indicated in \eqref{def:grad_decomposition}.
	Then, the second shape derivative of the reduced functional can be expressed as
	\begin{align*}
		&D^2_{\shape} \redFun(\shape)[\Vb,\Wb] (= \partial_t \reallagrangian(0, \weakSol^0,\advar^0,\psi^0,\varphi^0)) = \objFun''(\shape)[\Vb,\Wb] + D_{\shape} \objFun(\shape)[D\Vb\Wb],
	\end{align*}
where the first component is the linear second shape derivative
	\begin{align*}
		&\objFun''(\shape)[\Vb,\Wb] \\
		&= \int_{\nlDom}  \grad\data^{\top}\Wb \grad\data^{\top}\Vb - (\weakSol^0 - \data)\left(\Vb^{\top} \hess(\data)\Wb + \grad\data^{\top}\Vb \di \Wb \right) ~d\xb\\
		&+ \int_{\nlDom} -(\weakSol^0 - \data)\grad\data^{\top}\Wb \di \Vb + \frac{1}{2} \left(\weakSol^0 - \data\right)^2 \left(-\trace(D\Vb D\Wb) + \di \Vb \di \Wb\right) ~d\xb \\
		&+\int_{\nlDom} \Vb^{\top}\hess(f_{\shape})\Wb \advar^0 + \grad f_{\shape}^{\top}\Vb\advar^0 \di \Wb~d\xb \\
		&+ \int_{\nlDom} \grad f_{\shape}^{\top} \Wb \advar^0 \di \Vb + f_{\shape} \advar^0 \left(-\trace(D\Vb D\Wb) + \di \Vb \di \Wb \right) ~d\xb \\
		&+\sum_{i=1}^2 \frac{1}{2} \Biggl( \iint_{(\completeDom)^2} (\advar^0(\xb) - \advar^0(\yb))(\weakSol^0(\xb) T_1^i(\xb,\yb) - \weakSol^0(\yb)T_1^i(\yb,\xb)) ~d\yb d\xb \\
		&+ \iint\limits_{(\completeDom)^2} (\advar^0(\xb) - \advar^0(\yb))(\weakSol^0(\xb)\Psi_{\shape,\Vb}^i(\xb,\yb) - \weakSol^0(\yb)\Psi_{\shape,\Vb}^i(\yb,\xb))(\di \Wb(\xb) + \di \Wb(\yb)) ~d\yb d\xb \Biggr)\allowdisplaybreaks\\
		&+ \sum_{i=1}^2 \frac{1}{2} \left(\iint_{(\completeDom)^2} (\psi^0(\xb) - \psi^0(\yb))(\weakSol^0(\xb)\Psi_{\shape,\Wb}^i(\xb,\yb) - \weakSol^0(\yb)\Psi_{\shape,\Wb}^i(\yb,\xb)) ~d\yb d\xb\right) \\
		&+ \sum_{i=1}^2 \frac{1}{2} \left(\iint_{(\completeDom)^2} (\advar^0(\xb) - \advar^0(\yb))(\varphi^0(\xb)\Psi_{\shape,\Wb}^i(\xb,\yb) - \varphi^0(\yb)\Psi_{\shape,\Wb}^i(\yb,\xb)) ~d\yb d\xb \right) \\
		&-\int_{\nlDom} \grad f^{\top}\Wb\psi^0 + f \psi^0 \di \Wb ~d\xb + \int_{\nlDom} (\weakSol^0 - \data)\varphi^0\di \Wb - \grad \data^{\top}\Wb \varphi^0 ~d\xb.
	\end{align*}
and where the second term is the shape derivative at $\shape$ in direction $D\Vb\Wb$, which can be written as
\begin{align*}
D_{\shape} \objFun(\shape)[D\Vb\Wb] = &\int_{\nlDom} - \left(\weakSol^0 - \data\right)\grad \data^{\top}D\Vb\Wb + \frac{1}{2}(\weakSol^0 - \data)^2 \di(D\Vb\Wb) ~d\xb \\
&+ \int_{\nlDom} \grad f_{\shape}^{\top}D\Vb\Wb\advar^0 + f_{\shape}\advar^0 \di(D\Vb\Wb) ~d\xb \\
&+ \sum_{i=1}^2 \frac{1}{2} \iint_{(\completeDom)^2} (\advar^0(\xb) - \advar^0(\yb)) (\weakSol^0(\xb)T_2^i(\xb,\yb) - \weakSol^0(\yb)T_2^i(\yb,\xb)) ~d\yb d\xb.	
\end{align*}
\end{corollary}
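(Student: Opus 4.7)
The plan is to first discharge the hypotheses of the AAM corollary and then reduce everything to a direct computation. Lemmas \ref{lemma:D0_fulfilled} and \ref{lemma:D1_fulfilled} give (D0) and (D1); Assumption (S0) supplies (D2) through the Lax-Milgram argument applied to the state equations \eqref{eq:state_1}-\eqref{eq:state_2} and the averaged adjoint equations \eqref{eq:AAE1_nonlocal}-\eqref{eq:AAE2_nonlocal}; and Lemma \ref{lemma:D3_holds} establishes (D3). The AAM corollary then yields
$$D_{\shape}^2 \redFun(\shape)[\Vb,\Wb] = \partial_t \reallagrangian(0, \weakSol^0, \advar^0, \psi^0, \varphi^0),$$
so it remains to evaluate this partial derivative and reorganise it according to the structural decomposition \eqref{eq:structure_second_shape}.

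For the computation I would differentiate $\reallagrangian$ block by block. The four blocks $\objFunDer$, $\varForceDer$, $\varOpDer$ and $\secondVarForce$ have already been prepared in Lemma \ref{lemma:D1_fulfilled}. The two remaining blocks $\intOptW(\weakSol^0,\psi^0) - \intForcetW(\psi^0)$ and $\intOptW(\varphi^0,\advar^0) - \secondIntForcetW(\weakSol^0,\varphi^0)$ are, after pull-back, of exactly the form differentiated when computing the first shape derivative in \cite{shape_paper}, so their $t$-derivatives at $t=0$ are read off directly from that source, with $\psi^0$ and $\varphi^0$ playing the roles of the adjoint variables. At $t=0$ the simplifications $\Fb_0^{\Wb}=\Id$, $\Vb^0=\Vb$, $\xi^0=1$, $\left.\tfrac{d}{dr}\right|_{r=0}\xi^r = \di \Wb$, $\kernel^0=\kernel_{\shape}$, $f^0=f_{\shape}$ and $\data^0=\data$ collapse all expressions to integrals on $\nlDom$ and $(\completeDom)^2$ in which only $\Vb$, $\Wb$ and their first and second derivatives appear.

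The final step is to split these terms according to \eqref{eq:structure_second_shape}. Using $(\grad \di \Vb)^{\top}\Wb = \di(D\Vb\Wb) - \trace(D\Vb D\Wb)$ in the objective and forcing integrals, and the decomposition $(\grad \Psi_{\shape,\Vb}^i)^{\top}\Wb = T_1^i + T_2^i$ from \eqref{def:grad_decomposition} in the bilinear integrals, every integrand carrying the factor $D\Vb\Wb$ can be isolated. By inspection, the collection of all such pieces equals the first shape derivative formula of \cite{shape_paper} evaluated on the vector field $D\Vb\Wb$, which is precisely the claimed $D_{\shape}\objFun(\shape)[D\Vb\Wb]$ term; everything left over is, by construction, the linear second shape derivative $\objFun''(\shape)[\Vb,\Wb]$, and assembling the surviving pieces in the order objective/forcing/bilinear/first-derivative-of-$\psi^0,\varphi^0$-blocks reproduces the formula stated in the corollary. (Perimeter contributions are suppressed throughout in line with Remark \ref{remark:perimeter_der}.)

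I expect the main obstacle to be the algebraic bookkeeping: one must verify that every piece involving $D\Vb\Wb$ re-assembles exactly into the known first-derivative formula with no crossterm left over and no $T_1^i$-piece mislabelled as a $T_2^i$-piece, and that the $\di \Wb$-factors generated by differentiating $\xi^r$ combine correctly with the $\di \Vb$-factors already present in $\Psi_{\shape,\Vb}^2$. For singular symmetric kernels, the bilinear integrals over $T_1^i$ and $T_2^i$ are a priori only defined as $\lim_{n\to\infty}\iint_{D_n}$; the uniform bounds needed to rewrite these limits as the $\iint_{(\completeDom)^2}$ appearing in the statement are supplied by Assumption (S1), Remark \ref{remark:sing_kernel_bounded_Dn} and the $H^s$-continuity of Lemma \ref{lemma:varOpDer_continuity}, so that step is routine rather than delicate.
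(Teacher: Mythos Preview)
Your proposal is correct and follows essentially the same route as the paper. The paper does not supply a separate proof for this corollary: it is stated as the immediate consequence of combining the AAM corollary (once (D0)--(D3) are in place via Lemmas \ref{lemma:D0_fulfilled}, \ref{lemma:D1_fulfilled}, Assumption (S0), and Lemma \ref{lemma:D3_holds}) with the explicit $\partial_t$-formulas already written out in the proofs of Lemmas \ref{lemma:D1_fulfilled} and \ref{lemma:D3_holds}, evaluated at $t=0$, and then split via the identity $(\grad\di\Vb)^\top\Wb=\di(D\Vb\Wb)-\trace(D\Vb D\Wb)$ and the decomposition \eqref{def:grad_decomposition} recorded just before the corollary---which is precisely the computation you outline. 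One small remark: (D2) is not obtained by a Lax--Milgram argument in the paper but is simply part of Assumption (S0); this does not affect your argument.
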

We now continue by putting the derived formula for $(\redFun)''(\shape)[\Vb,\Wb]$ into numerical practice.
\section{Numerical Experiments}
\label{chap:num_exp}
Here, we present a second order shape optimization algorithm, where we follow a Newton-like approach, that incorporates first and second shape derivatives of the reduced objective functional, which are derived by the averaged adjoint method. Additionally, we describe the implementation and discuss numerical examples.\\~\\
As mentioned before, we follow \cite{linear_view} and optimize on a suitable subset of $\{T|T:\overline{\nlDom} \rightarrow \overline{\nlDom}\}$. Therefore, we omit the possibly nonsymmetric part of the second shape derivative and only utilize $(\objFun^{red})''(\shape)[\Vb,\Wb]$. Here, the objective functional also includes the perimeter regularization $\perimeter$, whose shape derivatives can be found in Remark \ref{remark:perimeter_der}.
Moreover, we apply the finite element method and approximate functions by a linear combination of continuous, piecewise linear basis functions. In particular, in every iteration $k$ we will search for a deformation $\Wb^{k,h}$ in the space of such linear combinations of two dimensional CG ansatz functions, which we denote as $CG^2$.
For a detailed description on how the finite element framework can be implemented in order to solve nonlocal Dirichlet problems we refer to \cite{fe_cookbook}.\\~\\
However, due to the Hadamard structure theorem (see \cite[Chapter 9 Theorem 3.6 and Corollary 1]{shapes_geometries}), the support of every Riesz representation of $D_{\shape} \objFun^{red}(\shape)[\Vb]$ is a subset of $\shape$ and is only dependent on the normal component of a vector field $\Vb$ on $\shape$, if $\shape$ is smooth enough. Therefore, deformations inside of $\nlDom_1$ and $\nlDom_2$ as well as tangential forces do not result in a change of the objective functional value. Then, from Definition \ref{def:second_shape_der} and the structure of the second shape derivative (see \eqref{eq:structure_second_shape}) follows that
the linear second shape derivative $(\objFun^{red})''(\shape)[\Vb,\Wb]$ has a huge null space, is not invertible and thus the corresponding stiffness matrix in the finite element method is not positive definite. As a remedy, we set
\begin{align*}
	\reg(\Vb,\Wb) \defas \int_{\nlDom} \left(\Vb,\Wb\right)_2 + \left(\grad \Vb, \grad \Wb\right)_F ~d\xb
\end{align*}
and apply a Newton-like approach by solving the following problem in every iteration for a given constant $\epsilon > 0$.
\begin{align}
	\begin{split} \label{eq:Newton_like_method}
		&\textit{Find } \Wb_{\epsilon}^k \in \vecfieldsspecific \textit{ subject to } \\
		&(\objFun^{red})''(\shape^k)[\Vb, \Wb^k_{\epsilon}] + \epsilon \reg(\Vb, \Wb_{\epsilon}^k) = - D_{\shape} \objFun^{red}(\shape^k)[\Vb] \textit{ for all } \Vb \in \vecfieldsspecific.
	\end{split}
\end{align}
Then, there exists a unique solution to problem \eqref{eq:Newton_like_method} and the finite element stiffness matrix is positive definite.
As shown in \cite[Theorem 3.5]{linear_view}, if $\epsilon \searrow 0$ the corresponding solutions $\Wb^k_{\epsilon}$ converge to a solution that is derived by solving the unperturbed problem \eqref{eq:Newton_like_method} with $\epsilon = 0$ by a Newton Method based on the Moore-Penrose pseudoinverse.\\
Recall that in order to compute $D_{\shape} \objFun^{red}(\shape^k)[\Vb]$ and $(\objFun^{red})''(\shape^k)[\Vb,\Wb]$, we need to solve \eqref{eq:state_1} and \eqref{eq:state_2}. Additionally, for every $\Vb \in \vecfieldsspecific$ with $\supp(\Vb) \cap \shape^k \neq \emptyset$, we have to compute solutions $\psi(\shape^k,\Vb)$ and $\varphi(\shape^k, \Vb)$ to \eqref{eq:AAE1_nonlocal} and \eqref{eq:AAE2_nonlocal}.
The complete optimization procedure is illustrated in Algorithm \ref{algorithm}.\\
\begin{algorithm}[H]\label{algorithm}
	\While{$k \leq$ \texttt{maxiter} $ \text{ or } \| \Wb^{k,h}_{\epsilon} \| > \text{ tol} $}{
		Interpolate $\data$ onto current finite element mesh $\nlDom^k$. \\
		$\weakSol(\shape^k),\advar(\shape^k) \leftarrow$ Solve state and adjoint equation.\\
		Assemble shape derivative $D_{\shape} \objFun^{red}(\shape^k)[\Vb^h]$, for all $\Vb^h \in CG^2$.\\
		Set $D_{\shape} \objFun^{red}(\shape^k)[\Vb^h] = 0$ if $\supp(\Vb^h)\cap \shape^k=\emptyset$.\\
		$\psi(\shape^k, \Vb^h), \varphi(\shape^k, \Vb^h) \leftarrow$ Solve \eqref{eq:AAE1_nonlocal} and \eqref{eq:AAE2_nonlocal} for all $\Vb^h \in CG^2$.\\
		Assemble linear second shape derivative $(\objFun^{reg})''(\shape^k)[\Vb^h, \Wb^h]$, for all $\Vb^h, \Wb^h \in CG^2$.\\
		$\Wb^{k,h}_{\epsilon} \leftarrow$ Get deformation by solving 
		\begin{align*}
		(\objFun^{red})''(\shape^k)[\Vb^h,\Wb^{k,h}_{\epsilon}] + \epsilon \reg(\Vb^h,\Wb^{k,h}_{\epsilon}) = - D\objFun^{red}(\shape^k)[\Vb^h] \text{ for all } \Vb^h \in CG^2.
		\end{align*}
		Update mesh:\\
		$\nlDom^{k+1}= (\Id + \Wb^{k,h}_{\epsilon})(\nlDom^k)$\\
		$k = k+1$
	} \caption{Second Order Shape Optimization Algorithm}
\end{algorithm}~\\
As mentioned before, we apply the finite element method. For our experiments, the underlying mesh is generated by Gmsh\cite{gmsh} and we assemble the stiffness matrix for the bilinear nonlocal operator $\varOp$ with the Python package nlfem\cite{nlfem}. Moreover, the shape derivatives as well as the linear second derivative of the nonlocal bilinear operator $\varOp_{\shape}$ are built by a customized version of nlfem. The regularization function and the objective functional along with their derivative are constructed by FEniCS\cite{FEniCS1,FEniCS2}. \\
For the first example, we choose an integrable kernel $\kernel_1$ as follows
\begin{align*}
	\kernel_1(\xb,\yb) = \begin{cases}
		0.1c_1\ind_{B_{\delta}(\xb)}(\yb) & \text{if } \xb,\yb \in \nlDom_1, \\
		10c_1\ind_{B_{\delta}(\xb)}(\yb) & \text{if } \xb,\yb \in  \nlDom_2, \\
		c_1\ind_{B_{\delta}(\xb)}(\yb) & \text{else},
	\end{cases}
\end{align*}
where $c_1 \defas \frac{1}{\delta^4}$. Additionally, we set $\delta =0.1$, $\epsilon=0.3$, $\nu=2\cdot10^{-3}$, $g=0$ as boundary data and ${f_{\shape}= 10 \ind_{\nlDom_1} - 10 \ind_{\nlDom_2}}$ in the forcing term.
\begin{figure}[h!] 
	\begin{center}
		\begin{small}
			\begin{tabular}{cccc}
				\includegraphics[width = 0.2\textwidth]{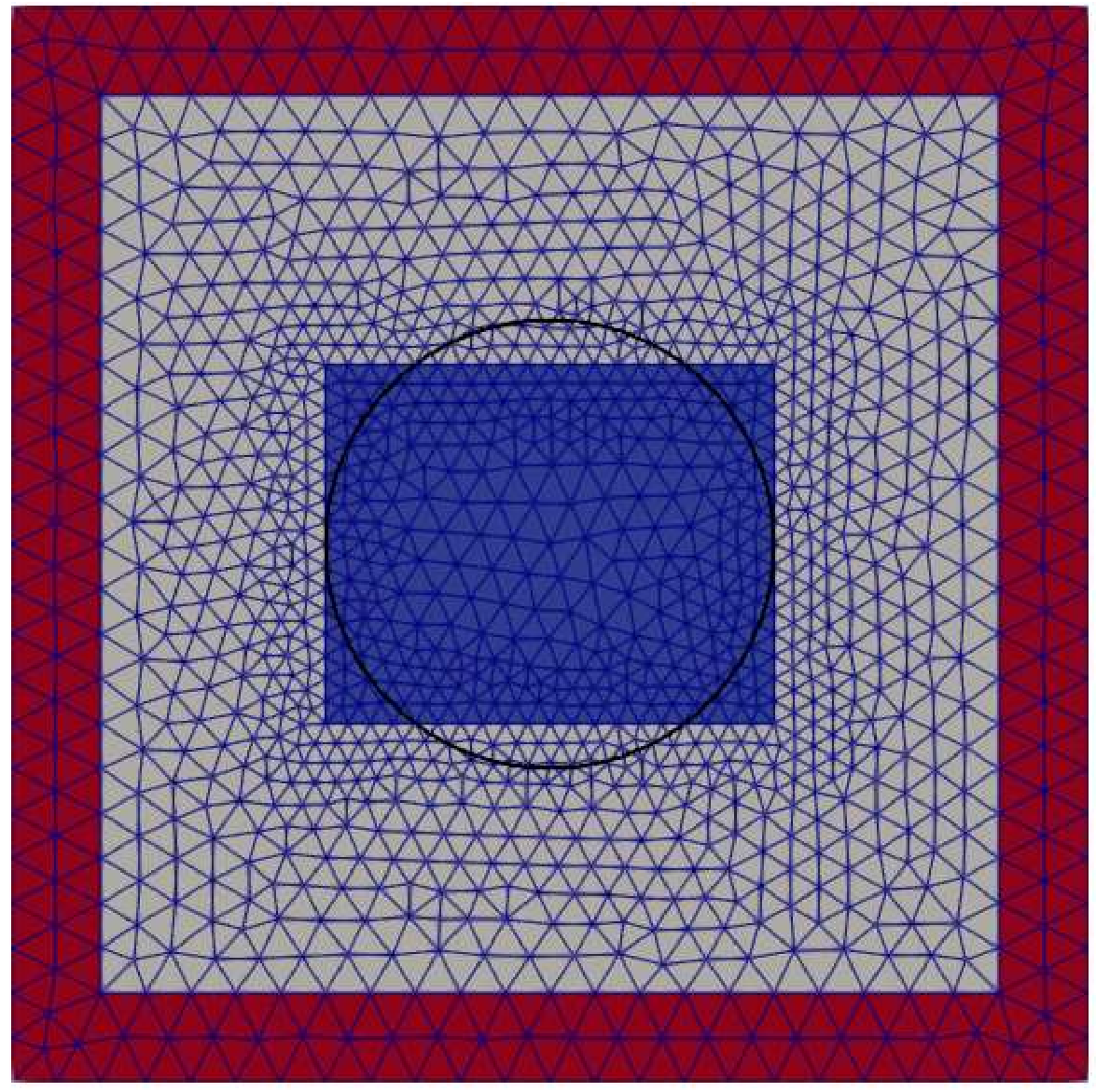}
				&\includegraphics[width = 0.2\textwidth]{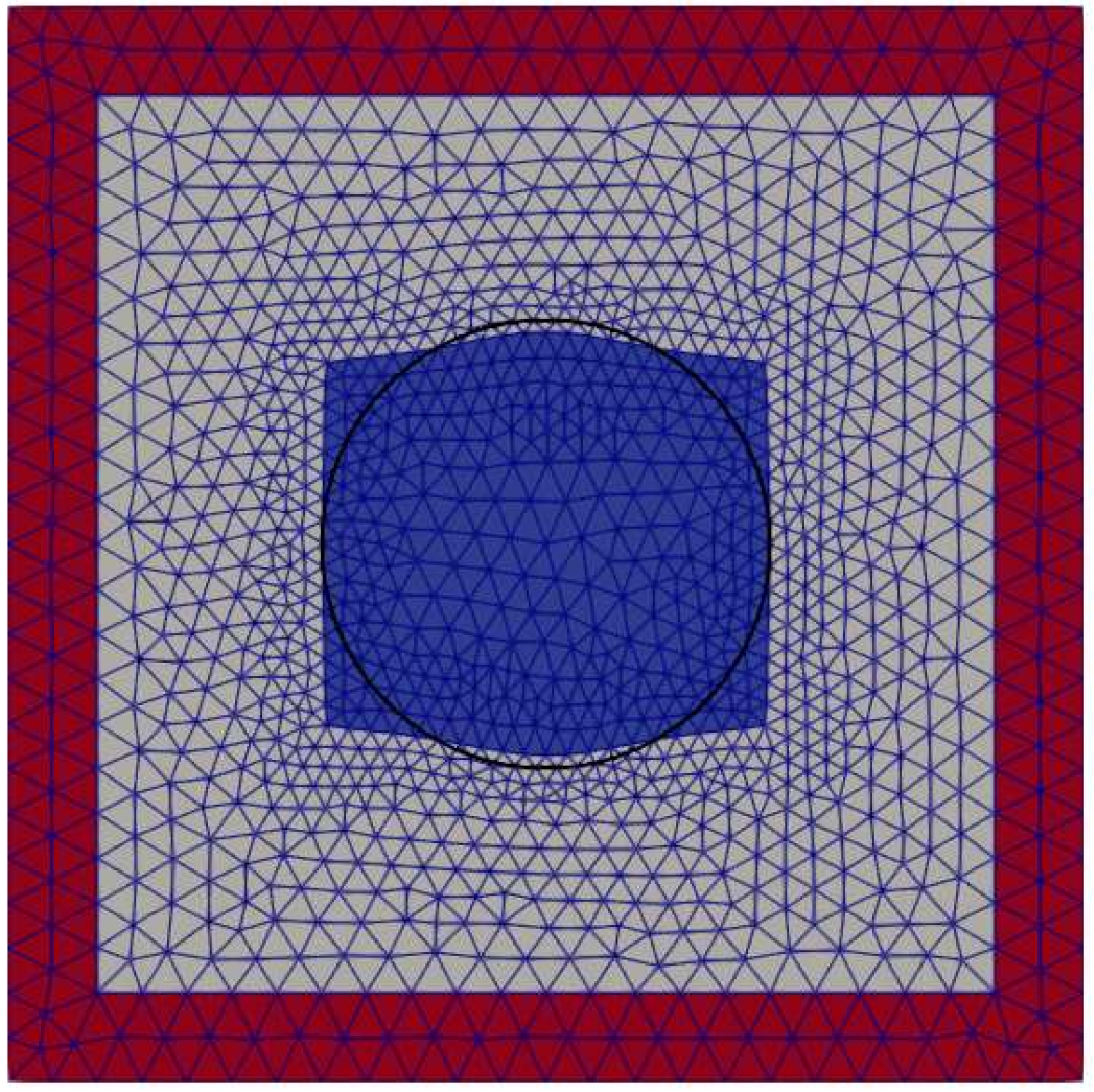}
				&\includegraphics[width = 0.2\textwidth]{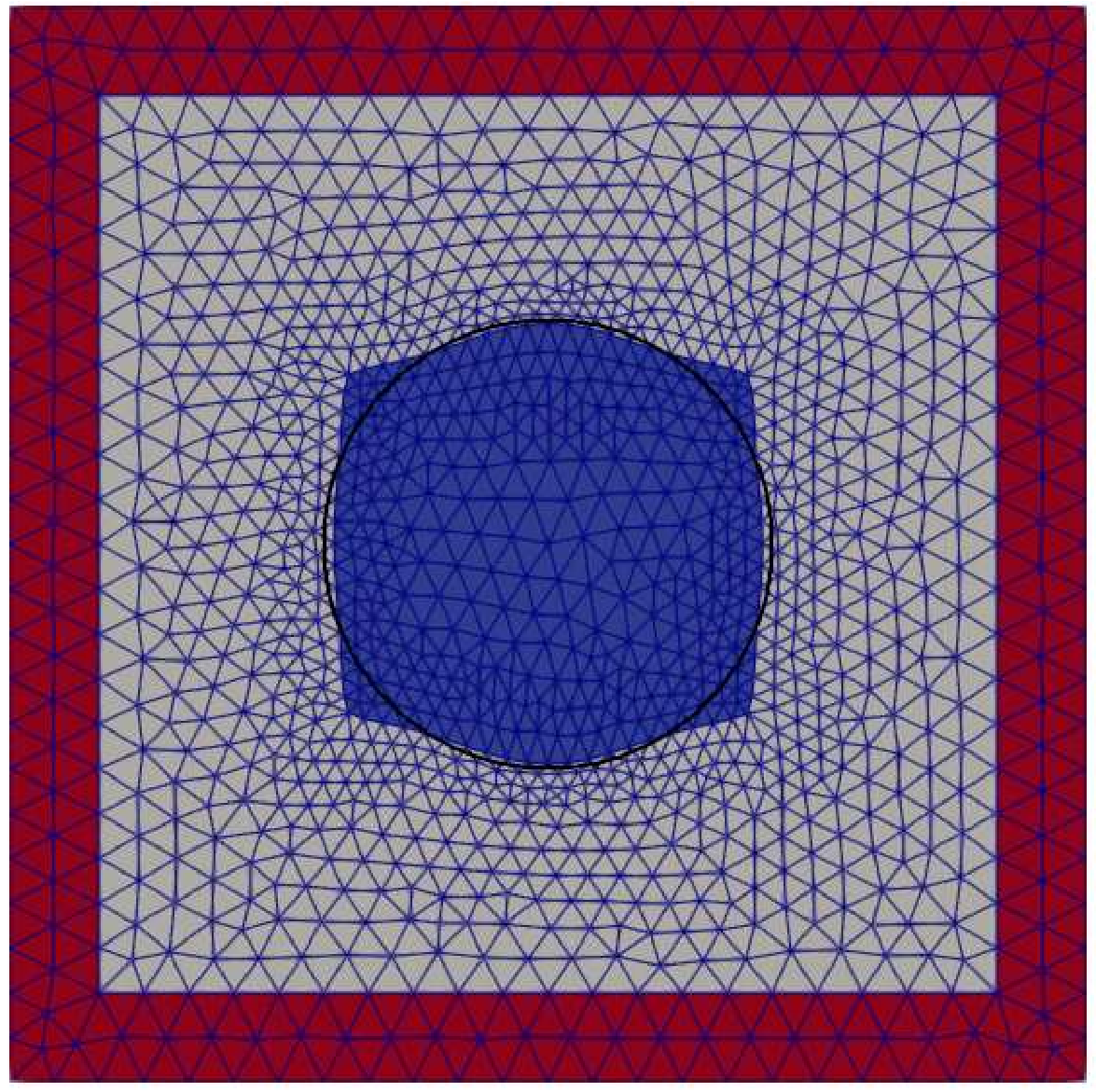}
				&\includegraphics[width = 0.2\textwidth]{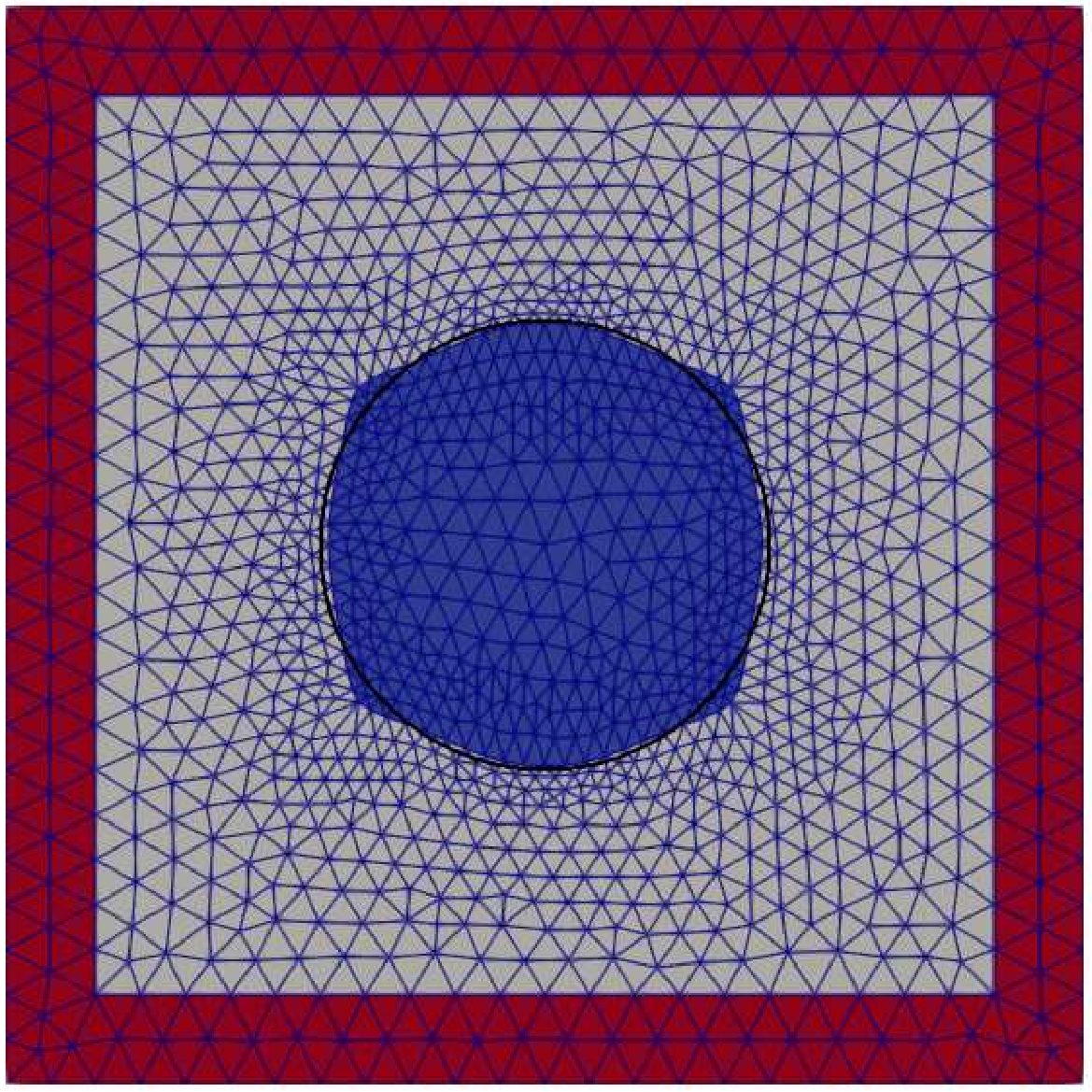}\\
				Start setup & Iteration 1 & Iteration 5 & Iteration 10\\
				&\includegraphics[width = 0.2\textwidth]{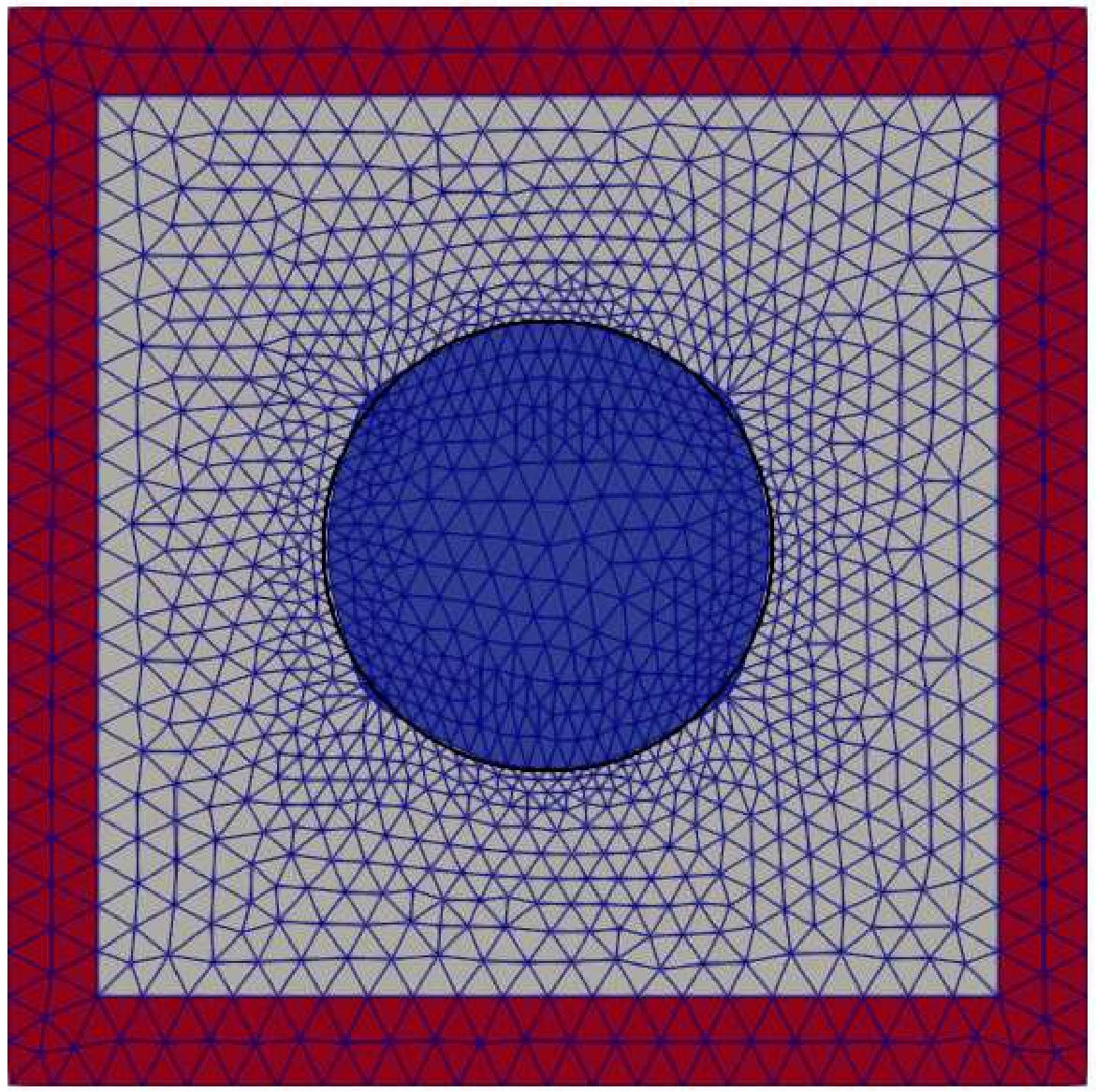}
				&\includegraphics[width = 0.2\textwidth]{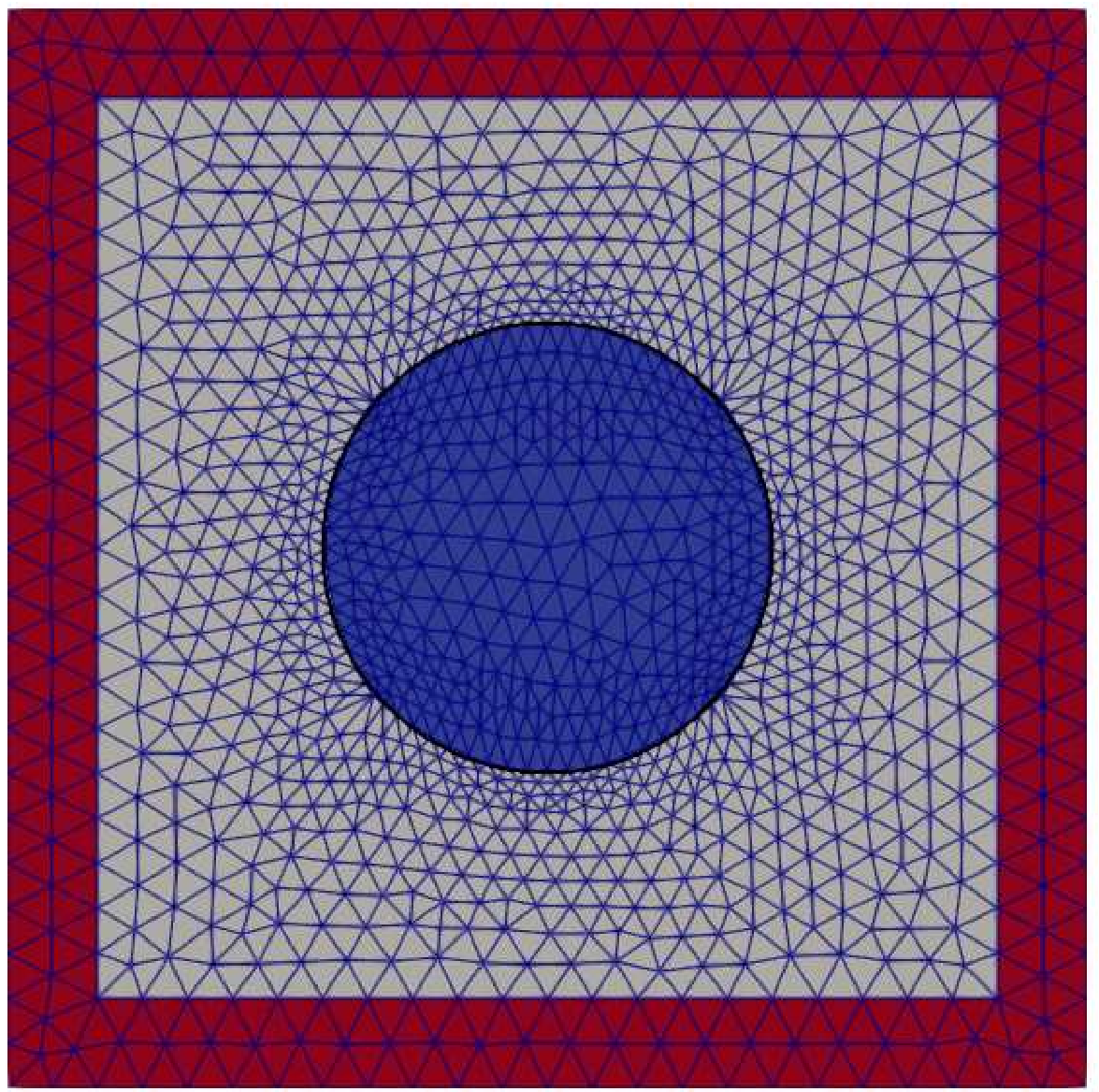}
				&\includegraphics[width = 0.2\textwidth]{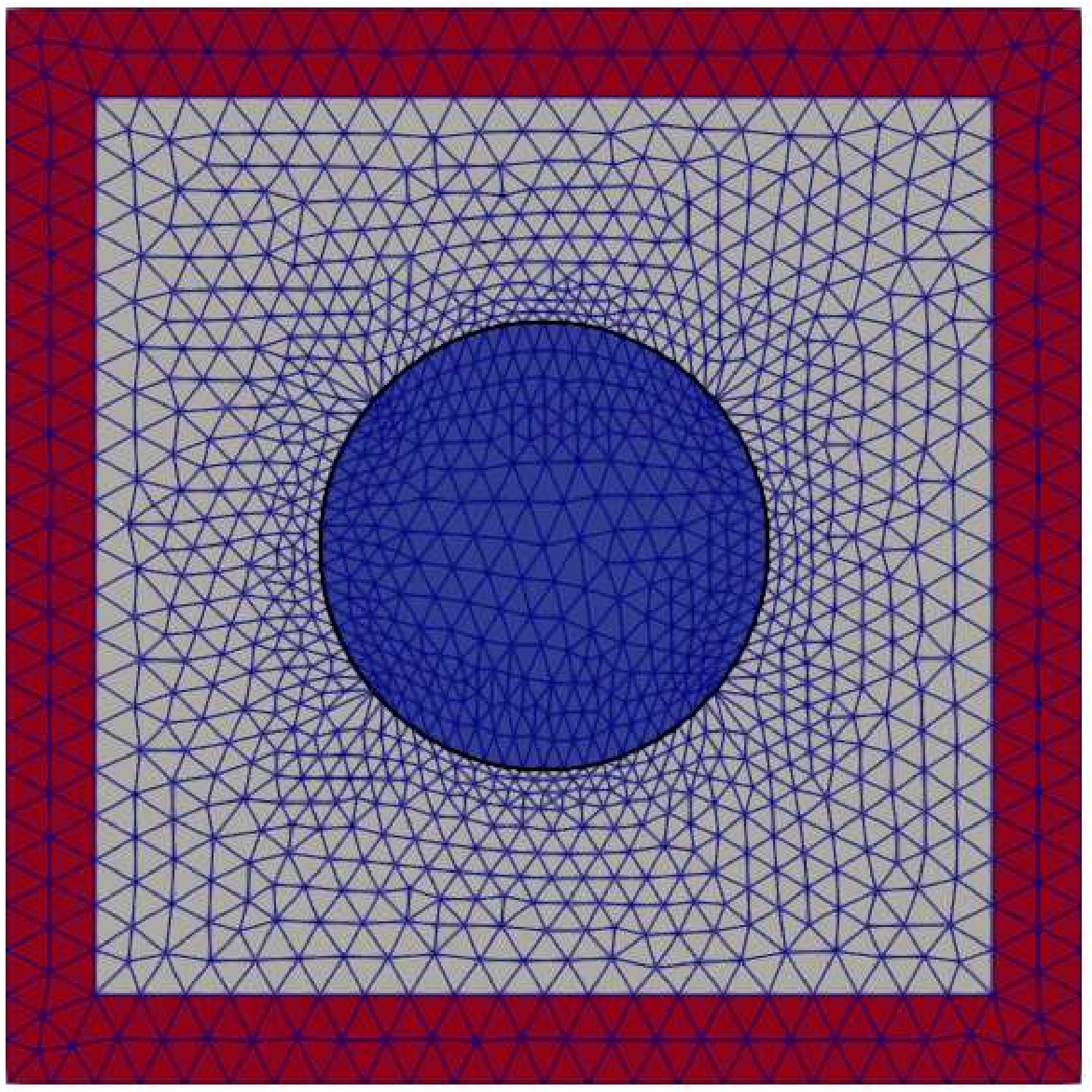}\\
				& Iteration 15 & Iteration 20 & Iteration 29
			\end{tabular}
		\end{small}
	\end{center}
	\caption{Example 1: The domain $\nlDom$ is decomposed in the blue area $\nlDom_1$ and the gray region $\nlDom_2$. The nonlocal boundary is colored in red. Additionally, the black circle can be interpreted as the target shape, i.e., we solve the nonlocal Dirichlet problem given the integrable kernel $\kernel_1$, the parameters chosen as described in the text and the decomposition of $\nlDom$ in $\nlDom_1$ and $\nlDom_2$ according to the black circle. The solution is then used as the data $\data$ in the interface identification problem, where we start with the blue square as the initial shape of $\nlDom_1$.}
	\label{fig:shape_ex_1}
\end{figure}
The set-up and the results are shown in Figure \ref{fig:shape_ex_1}. In our experiments, the data $\data$, which we would like to approximate as good as possible, is computed by solving the nonlocal Dirichlet problem, where $\shape$ is the black circle, that can be seen in all the picture, i.e., $\nlDom_1$ is inside and $\nlDom_2$ outside of $\shape$. Then, we start in a set-up, where $\nlDom_1$ is a square (see Figure \ref{fig:shape_ex_1}) and, ideally, this blue area should be deformed to a blue disk in the course of the shape optimization algorithm. We stop, after 50 iterations or if the $L^2(\nlDom)$-norm of the deformation $\Wb^{k,h}_{\epsilon}$ is smaller than $5\cdot10^{-5}$, which we again compute by using FEniCS. As we can observe in Figure \ref{fig:shape_ex_1}, we derive roughly a circle after 15 iterations and the algorithm terminates after 29 iterations.\\
In the second example, we apply the singular symmetric kernel
\begin{align*}
	\kernel_2(\xb,\yb) = \begin{cases}
		10.0 \frac{c_2}{||\xb-\yb||_2^{2+2s}} \ind_{B_{\delta}(\xb)}(\yb) & \text{if } \xb,\yb \in \nlDom_1 \\
		1.0 \frac{c_2}{||\xb-\yb||_2^{2+2s}} \ind_{B_{\delta}(\xb)}(\yb) & \text{if } \xb,\yb \in \nlDom_2 \\
		5.0 \frac{c_2}{||\xb-\yb||_2^{2+2s}} \ind_{B_{\delta}(\xb)}(\yb) & \text{else},
	\end{cases}
\end{align*}
where $c_2 \defas \frac{2 - 2s}{\pi \delta^{2+2s}}$. Further, we set $\epsilon = 0.02$, $\nu = 2\cdot10^{-4}$ and $\delta=0.1$. Again we have $f_{\shape} = 10\ind_{\nlDom_1} - 10\ind_{\nlDom_2}$ in the forcing term and $g = 0$ on the boundary. The termination criteria as well as the target shape are also the same as in the first experiment. The results are shown in Figure \ref{fig:shape_ex_2}, where we now start from a different shape $\shape$. In this case, $\nlDom_1$ has roughly the shape of the black circle after 5 iterations and the algorithm ends after 22 iterations.
\begin{figure}[h!] 
	\begin{center}
		\begin{small}
			\begin{tabular}{cccc}
				\includegraphics[width = 0.2\textwidth]{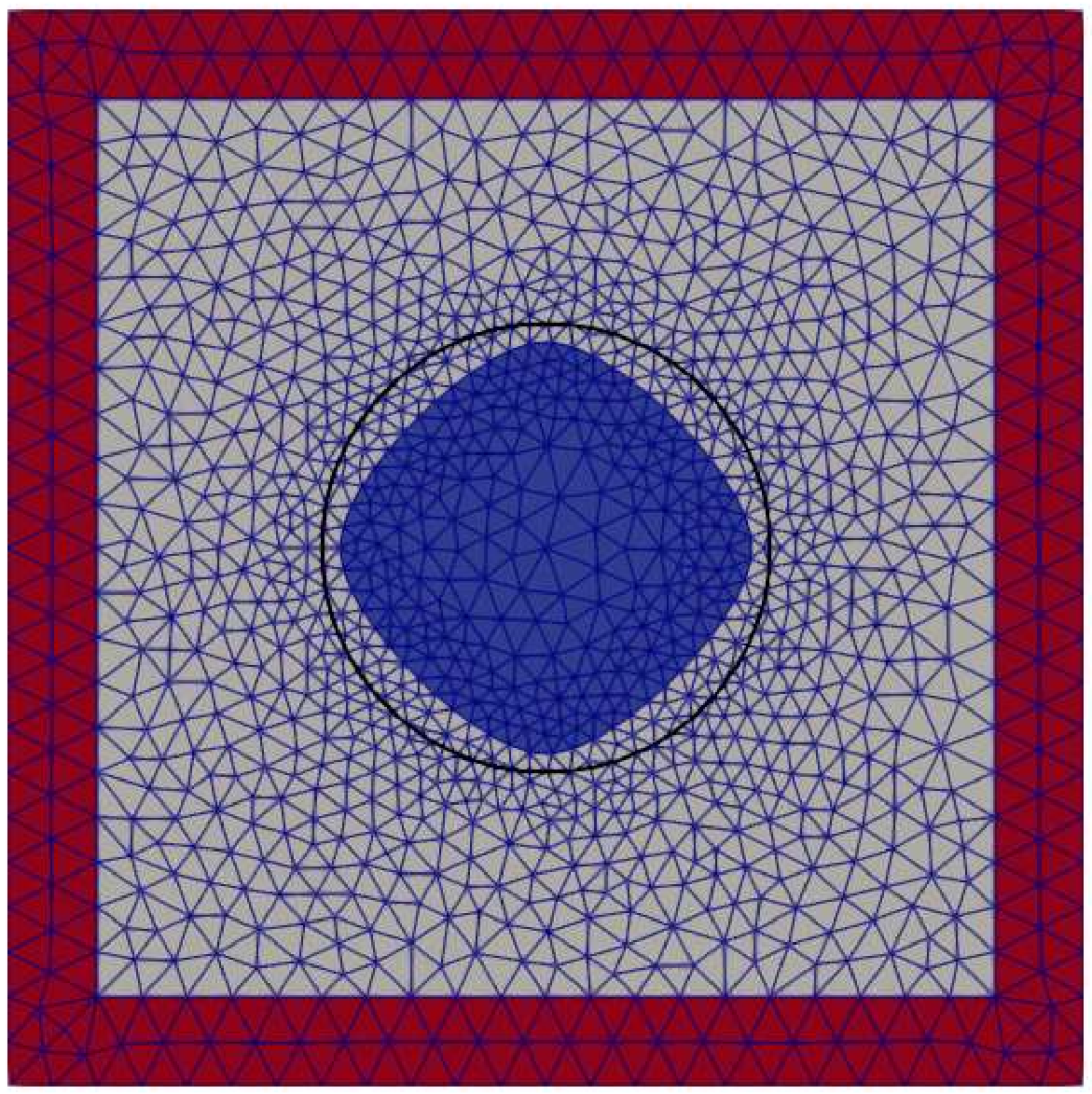}
				&\includegraphics[width = 0.2\textwidth]{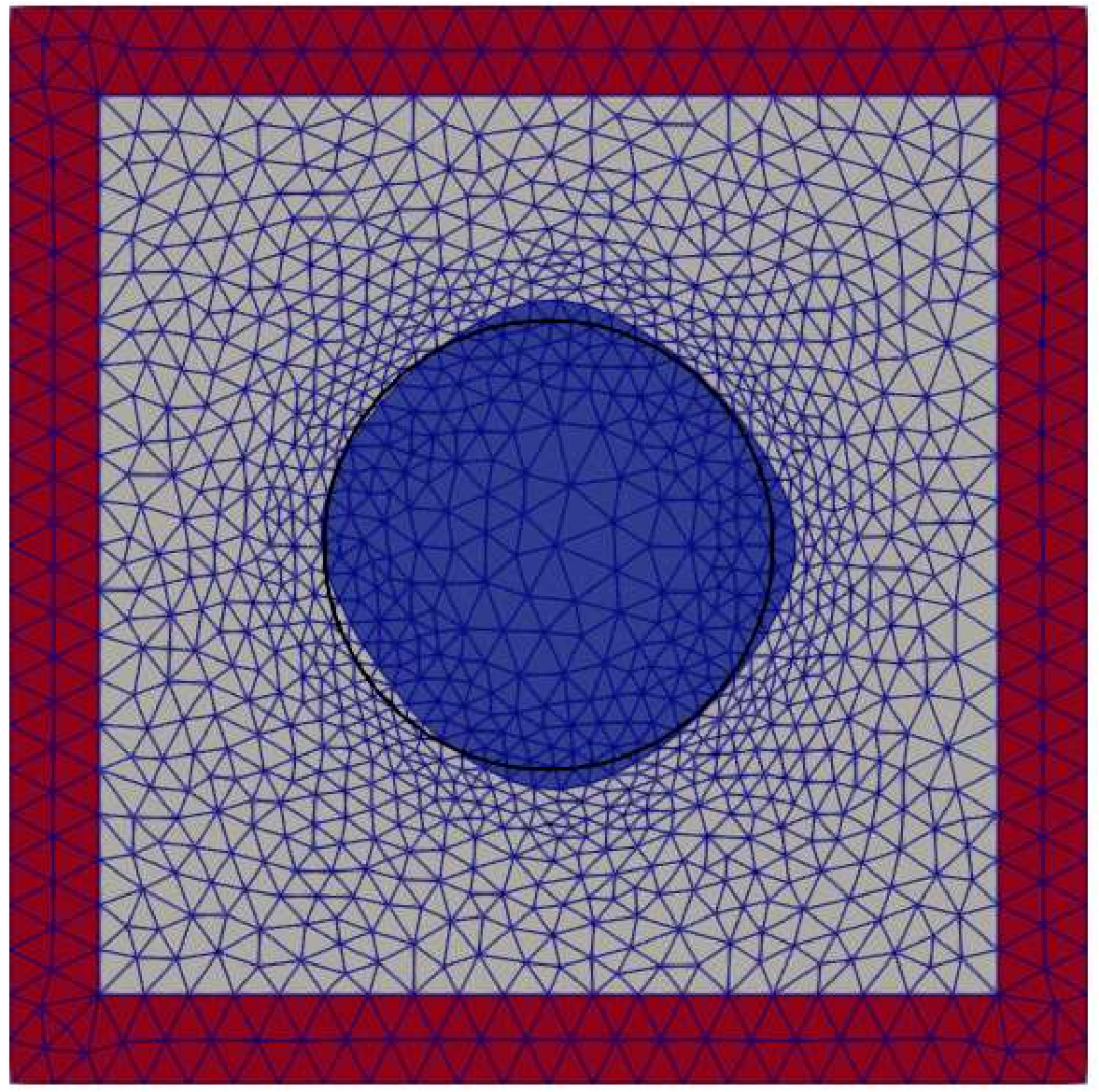}
				&\includegraphics[width = 0.2\textwidth]{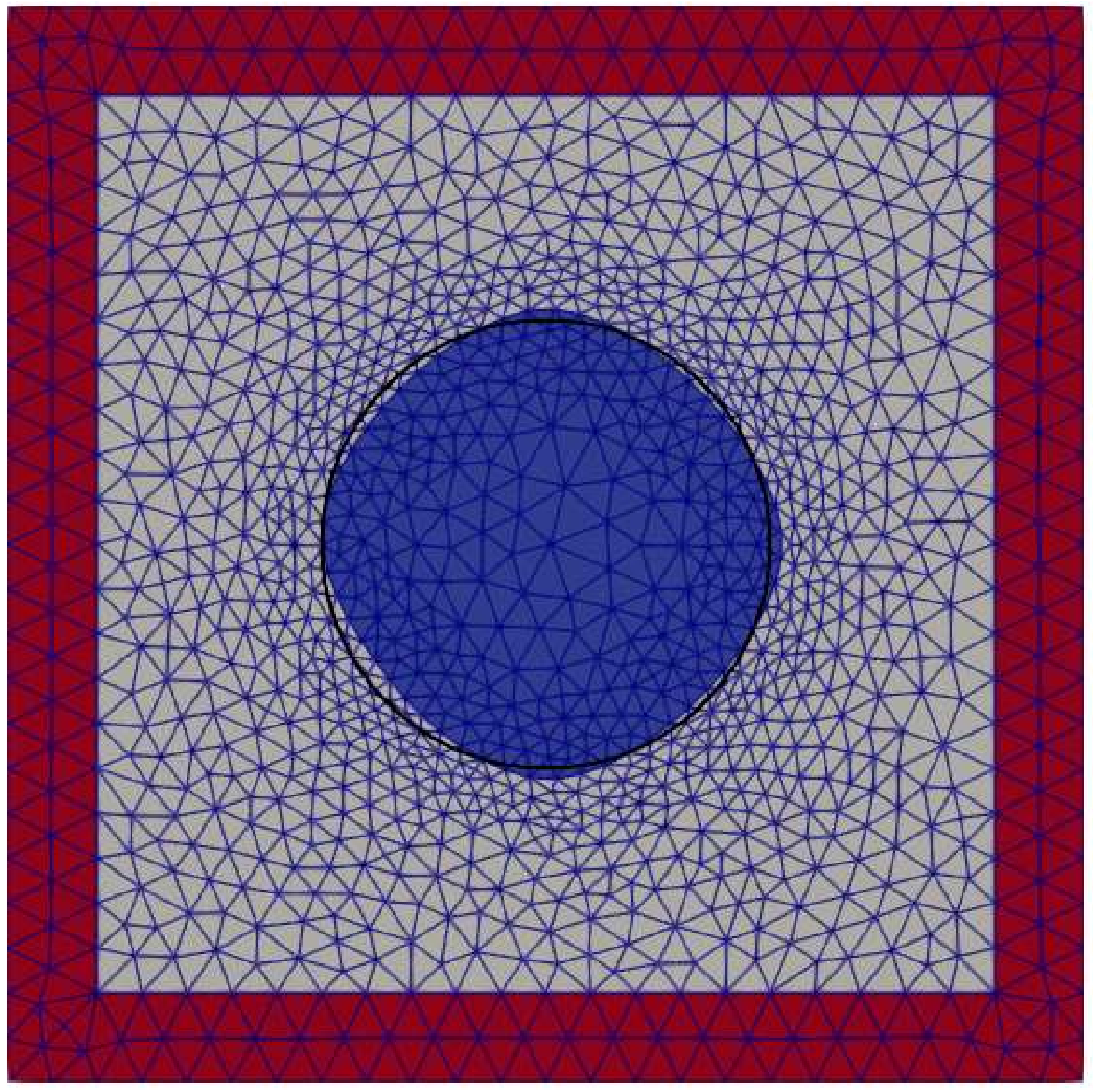}
				&\includegraphics[width = 0.2\textwidth]{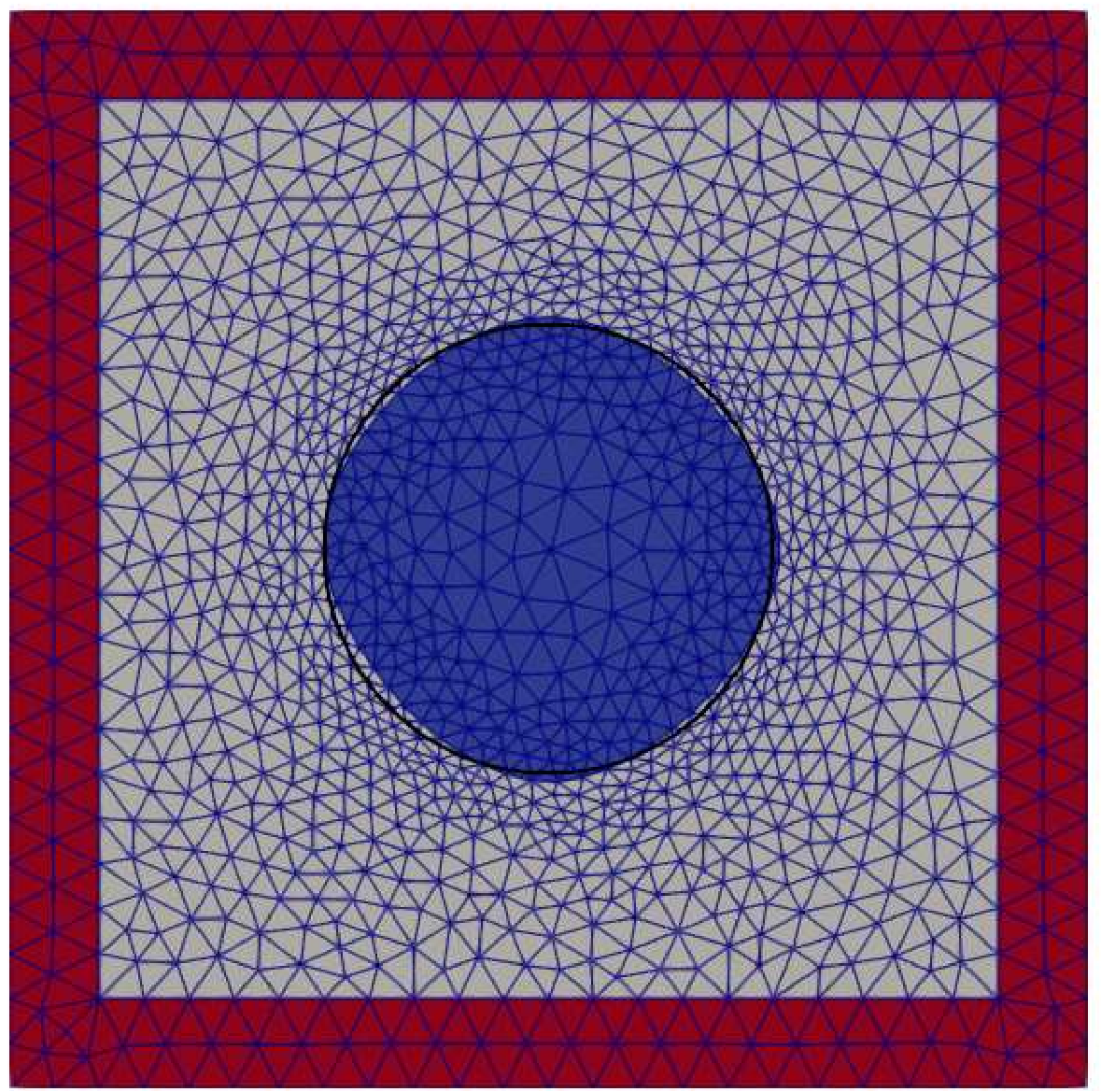}\\
				Start setup & Iteration 1 & Iteration 2 & Iteration 5\\
				&\includegraphics[width = 0.2\textwidth]{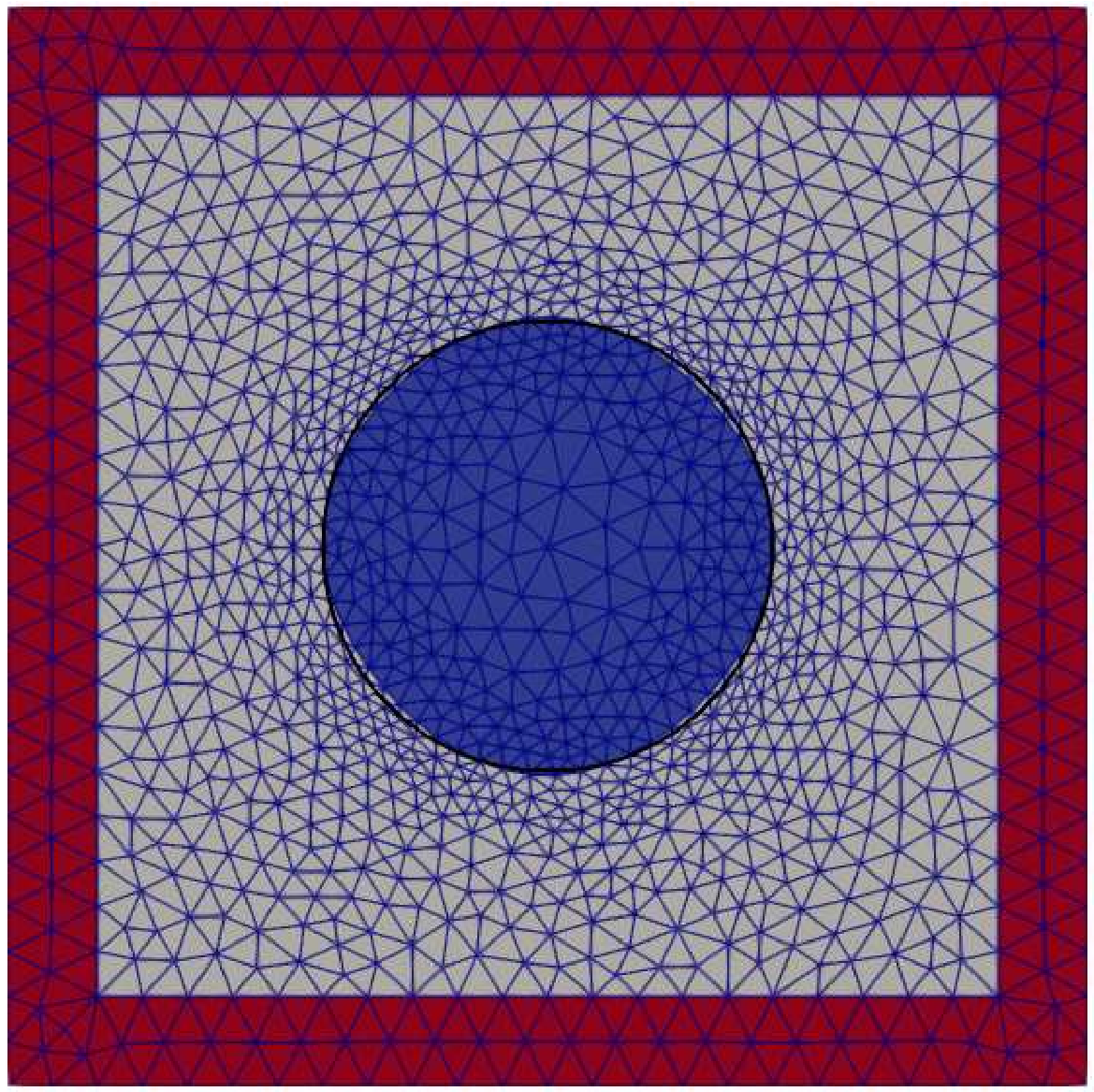}
				&\includegraphics[width = 0.2\textwidth]{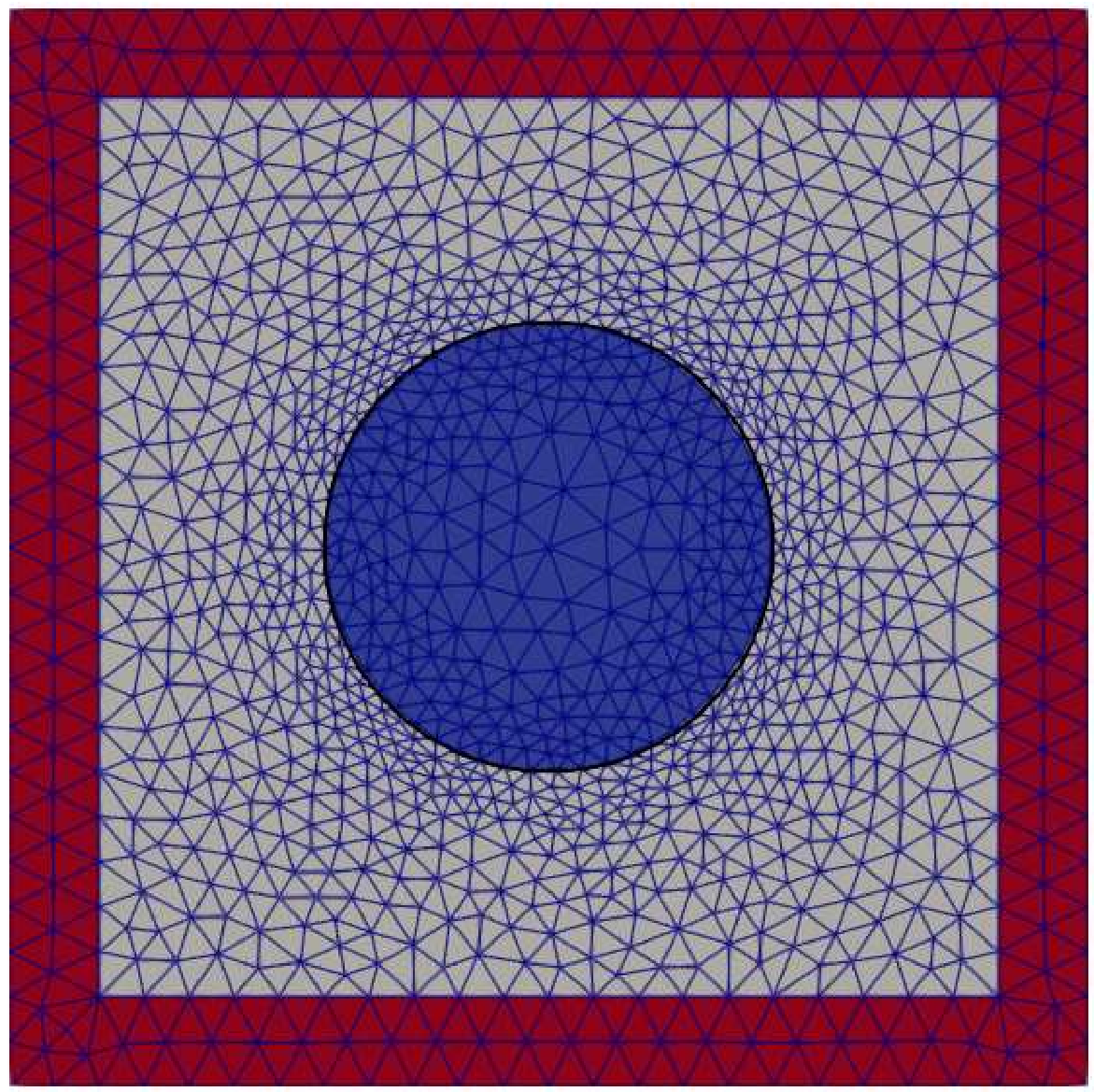}
				&\\
				& Iteration 10 & Iteration 22 & 
			\end{tabular}
		\end{small}
	\end{center}
	\caption{Example 2: Here, $\nlDom_1$ is the blue and $\nlDom_2$ the gray area. The nonlocal boundary $\nlBound$ is depicted in red. The black circle is the shape, that we try to reach. The given data $\data$ is the solution of the nonlocal Dirichlet problem, where the interface $\shape$ is the black circle and where we apply the singular symmetric kernel $\kernel_2$.}
	\label{fig:shape_ex_2}
\end{figure}\\~\\
The development of the objective functional values are illustrated in Figure \ref{fig:obj_fun_values} and the history of the deformation $\Wb^{k,h}_{\epsilon}$ in the $L^2$-norm is depicted in Picture \ref{fig:def_norms}. Here, we also tested the integrable kernel $\kernel_1$ with the same parameters as in the first example on the second set-up. Analogously, we also used the singular kernel $\kernel_2$ with the presented parameters on the starting domain of the first experiment.
\begin{figure}[h!] 
	\begin{center}
		\begin{small}
			\begin{tabular}{cc}
				\includegraphics[width = 0.50\textwidth]{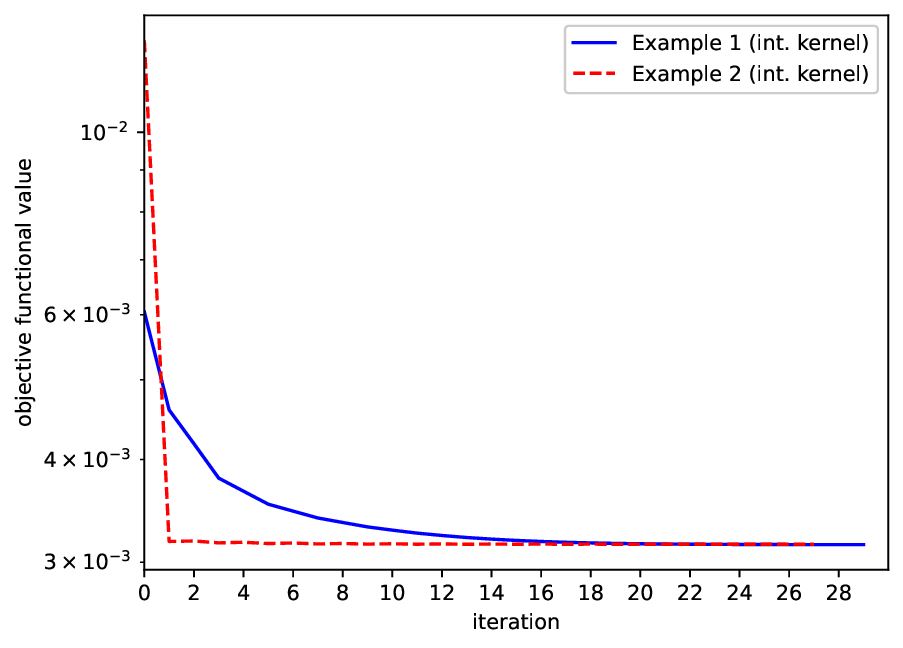}
				&\includegraphics[width = 0.50\textwidth]{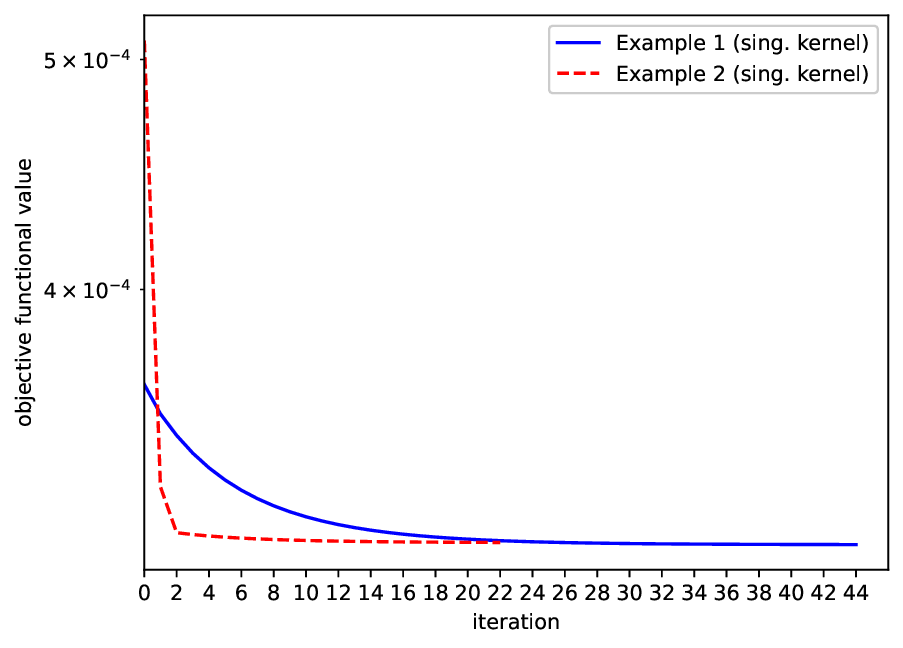}\\
				integrable kernel & singular kernel
			\end{tabular}
		\end{small}
	\end{center}
	\caption{Development of the objective functional values for both kernels starting on both initial shapes.}
	\label{fig:obj_fun_values}
\end{figure}
\begin{figure}[h!]
	\centering
	\def\svgwidth{0.4\textwidth}
	\includegraphics[width=0.6\textwidth]{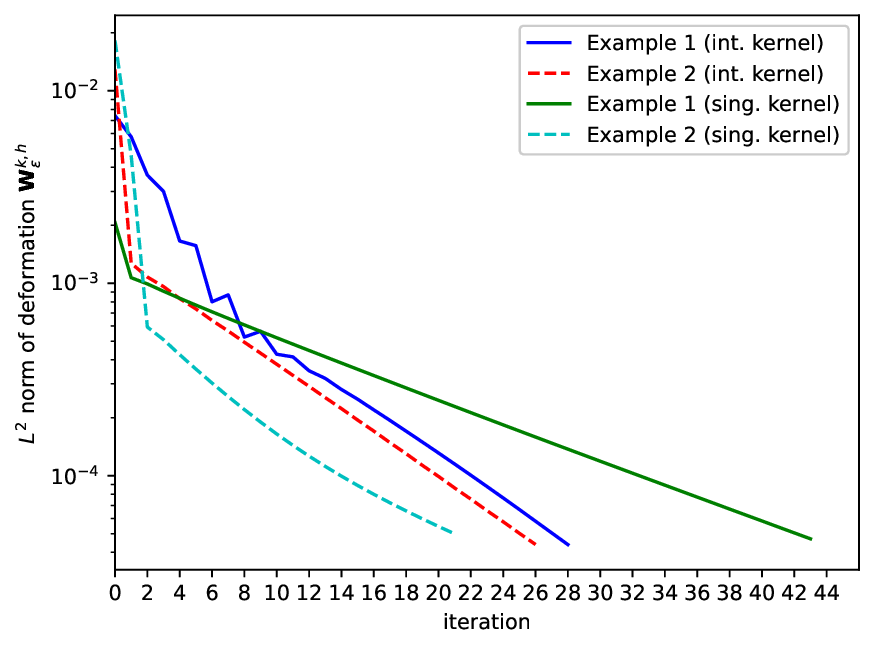}
	\caption{Development of $||\Wb^{k,h}_{\epsilon}||_2$ during the experiments.}
	\label{fig:def_norms}
\end{figure}
\newpage
\section{Conclusion}
 In this work, we investigated interface identification governed by a nonlocal Dirichlet problem. Further, we developed the second shape derivative of the reduced functional, that corresponds to this constrained interface identification problem, by applying a version of the averaged adjoint method. The second shape derivative was then used in a Newton-like second order shape optimization algorithm, which was successfully applied to two numerical examples in the previous chapter. Thus, to the best knowledge of the authors, this is the first study of the numerical usage of second shape derivatives for nonlocal problems.
\bibliographystyle{plain}
\bibliography{literature.bib}
\appendix
\section{Proofs}
Before we prove Lemma \ref{lemma:frechet_diff_bounded_domain}, we need the next statement, where we utilize spaces of $k$-times continuously differentiable vector fields with compact support, which we denote by
\begin{align*}
	{C_c^k(\holdAll,\Rd)\defas \{\Vb \in C(\holdAll,\Rd): \supp (\Vb) \subset \holdAll \text{ compact}\}} \text{ and } C_c^{\infty}(\holdAll,\Rd) \defas \bigcap_{k\in \Nbb_0} C_c^k(\holdAll,\Rd).
\end{align*} 
\begin{lemma} \label{lemma:C_c_dense_C_0}
	Let $k \in \Nbb_0$ and $\holdAll \subset \Rd$ be open and bounded. Then, $C_c^k(\holdAll,\Rd)$ is dense in $C_0^k(\holdAll,\Rd)$ w.r.t. the supremum norm.
\end{lemma}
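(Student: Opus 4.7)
The natural approach is to multiply $\Vb \in C_0^k(\holdAll, \Rd)$ by a smooth cutoff $\eta_r$ that equals $1$ away from the boundary and vanishes in a thin strip of width $O(r)$ near $\partial \holdAll$, obtaining an approximant $\Vb_r := \eta_r \Vb \in C_c^k(\holdAll, \Rd)$ that should converge to $\Vb$ as $r \searrow 0$. A key preliminary observation is that the vanishing condition on $\Vb$ together with all its partial derivatives up to order $k$ allows $\Vb$ to be extended by zero to $\tilde \Vb \in C^k(\Rd, \Rd)$, with all derivatives of $\tilde \Vb$ equal to zero on $\Rd \setminus \overline{\holdAll}$; this fact will later be exploited via a Taylor expansion at a point lying just outside $\overline{\holdAll}$.

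For each $r > 0$, I would choose $\eta_r \in C_c^\infty(\holdAll)$ with $\eta_r \equiv 1$ on $\{\xb : \text{dist}(\xb, \partial \holdAll) \geq 2r\}$, $\eta_r \equiv 0$ on $\{\xb : \text{dist}(\xb, \partial \holdAll) \leq r\}$ and standard bounds $\|\partial^\gamma \eta_r\|_\infty \leq C_\gamma r^{-|\gamma|}$ for $|\gamma| \leq k$. If ``supremum norm'' in the statement is interpreted as the plain $L^\infty$ norm of the function value, the proof is essentially finished at this point: $\|\Vb - \Vb_r\|_\infty \leq \sup\{|\Vb(\xb)| : \text{dist}(\xb, \partial \holdAll) < 2r\}$, which tends to $0$ by the vanishing hypothesis applied with the multi-index $\alpha = 0$.

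For the stronger interpretation as the full $C^k$-norm, I would apply the Leibniz rule to expand
\begin{align*}
\partial^\alpha(\Vb - \Vb_r) = \sum_{\gamma \leq \alpha} \binom{\alpha}{\gamma}\, \partial^\gamma(1 - \eta_r) \cdot \partial^{\alpha - \gamma}\Vb
\end{align*}
for each $|\alpha| \leq k$, noting that each summand is supported in the strip $\{0 < \text{dist}(\xb, \partial \holdAll) < 2r\}$. The main obstacle is controlling the summands with $|\gamma| \geq 1$: the blow-up $r^{-|\gamma|}$ in $\partial^\gamma(1 - \eta_r)$ must be compensated by a matching decay of $\partial^{\alpha - \gamma}\Vb$ on the strip, and the bare vanishing assumption does not obviously supply such a quantitative rate.

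To produce the required decay, for $\xb$ in the strip I would Taylor-expand $\partial^{\alpha - \gamma}\tilde \Vb$ about a point $\xb^* \in \Rd \setminus \overline{\holdAll}$ chosen within distance $O(r)$ of $\xb$; such $\xb^*$ exists because every boundary point of $\holdAll$ is accumulated by exterior points. Since $\tilde \Vb$ vanishes in a neighborhood of $\xb^*$, so do all its derivatives there, so the polynomial part of the expansion is zero and only the Lagrange remainder survives, yielding the pointwise bound
\begin{align*}
|\partial^{\alpha - \gamma}\tilde \Vb(\xb)| \leq C\, r^{k - |\alpha - \gamma|} \max_{|\beta| = k}\; \sup_{\text{dist}(\cdot,\, \partial\holdAll) < 3r} |\partial^\beta \tilde \Vb|.
\end{align*}
Multiplying this by $\|\partial^\gamma \eta_r\|_\infty \leq C r^{-|\gamma|}$ produces a bound of order $r^{k - |\alpha|}\max_{|\beta|=k}\omega_\beta(3r)$, where $\omega_\beta(r)$ is the supremum of $|\partial^\beta \Vb|$ over the $r$-neighborhood of $\partial \holdAll$. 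Since $|\alpha| \leq k$ and $\omega_\beta(3r) \to 0$ by the vanishing of the top-order derivatives, summing the Leibniz expansion over $\gamma$ and then over $|\alpha| \leq k$ gives $\|\Vb - \Vb_r\|_{C^k} \to 0$, which completes the density argument.
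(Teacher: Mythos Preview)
Your proposal is correct for the plain $L^\infty$ norm on function values, which is precisely the interpretation the paper adopts (its proof only establishes $\sup|f_n - f| < 1/n$). The paper's argument is shorter and takes a somewhat different tack: rather than building distance-based cutoffs $\eta_r$, it applies the $C^\infty$ Urysohn lemma directly to the compact level sets $K_n = \{|f| \geq 1/n\}$ to obtain $g_n \in C_c^\infty(\holdAll,[0,1])$ with $g_n \equiv 1$ on $K_n$, so that $|fg_n - f| = |f|\,|1-g_n| < 1/n$ wherever $g_n \neq 1$. This sidesteps the translation from ``vanishing at the boundary'' to ``small on thin boundary strips'' that your argument performs. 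Both constructions are equally elementary; the paper's is more concise, while yours is more geometric and naturally sets up the stronger $C^k$ discussion.

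On that $C^k$ extension, one caution: your Taylor step requires, for each $\xb$ in the boundary strip, an exterior point $\xb^* \in \Rd \setminus \overline{\holdAll}$ within distance $O(r)$ of $\xb$, justified by the claim that ``every boundary point of $\holdAll$ is accumulated by exterior points.'' That claim fails for general bounded open sets (e.g.\ a punctured ball has an isolated boundary point with no nearby exterior points), so the argument as written needs either a mild regularity hypothesis on $\partial\holdAll$ or a reformulation that Taylor-expands at boundary points after carefully verifying that the zero extension is genuinely $C^k$ there. This is, however, beyond what the paper proves or needs.
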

\begin{proof}
	This proof is oriented on the proof of \cite[Proposition 4.35]{folland}.
	Given $f \in C_0^k(\holdAll,\Rd)$. For $n \in \Nbb$ we set $K_n \defas \{\xb \in \holdAll: |f(\xb)| \geq \frac{1}{n} \}$. Then, the sets $K_n$ are compact and by making use of \cite[8.18 The $C^{\infty}$ Urysohn Lemma]{folland}, we get functions $g_n \in C_c^{\infty}(\holdAll,[0.1])$ with $g_n = 1$ on $K_n$. As a next step, we define $f_n:\holdAll\rightarrow\Rd$, $f_n\defas fg_n$. Consequently, we derive $f_n \in C_c^k(\holdAll,\Rd)$ for $n \in \Nbb$ and $\sup_{\xb \in \holdAll} |f_n-f| < \frac{1}{n}$, which proves that $f_n \rightarrow f$ uniformly for $n \rightarrow \infty$.
\end{proof}
\subsection{Proof of Lemma \ref{lemma:frechet_diff_bounded_domain}}
\label{app:proof_frechet_diff}
\begin{proof} 
	After \cite[Proposition 2.32]{sokolowski_Introduction}.
	\begin{sloppypar}
	Since $C^{\infty}(\holdAll)\cap W^{1,1}(\holdAll)$ is dense in $W^{1,1}(\holdAll)$ (see \cite[Chapter 5.3 Theorem 2]{evansPDE}) regarding ${||\cdot||_{W^{1,1}(\holdAll)}}$ and ${C_c^1(\nlDom,\Rd)}$ is dense in $C_0^1(\holdAll,\Rd)$ w.r.t. the supremum norm (see Lemma \ref{lemma:C_c_dense_C_0}), we only need to show the result for ${f \in C^{\infty}(\holdAll)\cap W^{1,1}(\holdAll)}$ and ${\Vb \in C_c^1(\holdAll,\Rd)}$.
	Now, given $f \in C^{\infty}(\holdAll)\cap W^{1,1}(\holdAll)$ and ${\Vb \in C_c^1(\holdAll,\Rd)}$, applying the mean value theorem yields
	\begin{align*}
		f(\FbV_{t+r}(\xb)) - f(\FbV_r(\xb)) = \int_{0}^{1} t\grad f(\FbV_r(\xb) + st\Vb(\xb))^{\top} \Vb(\xb) ~ds
	\end{align*}
\end{sloppypar}
	for $\xb \in \holdAll$. Consequently, we derive
	\begin{align}
		&\int_{\holdAll}\left|\frac{1}{t}\left(f(\FbV_{r+t}(\xb)) - f(\FbV_r(\xb))\right) - \grad f(\FbV_r(\xb))^{\top} \Vb(\xb)\right| ~d\xb \nonumber \\
		&\leq \int_{\holdAll}  \int_{0}^{1} \left| \left( \grad f(\FbV_r(\xb) + st\Vb(\xb)) - \grad f(\FbV_r(\xb))\right)^{\top} \Vb(\xb) \right|~ds ~d\xb \nonumber \\
		&\leq \int_{0}^{1} \int_{\holdAll} \left| \left( \grad f(\FbV_r(\xb) + st\Vb(\xb)) - \grad f(\FbV_r(\xb))\right)^{\top} \Vb(\xb) \right| ~d\xb ~ds, \label{proof_lemma_frechet}
	\end{align}
	where we changed the order of integration in the last step. Now, we just have to show, that the double integral \eqref{proof_lemma_frechet} vanishes, when $t \rightarrow 0$. 
	For that reason we would like to prove, that the inner integral converges to zero. 
	Since $\Vb \in C_c^1(\holdAll,\Rd)$  and $f \in C^{\infty}(\holdAll)$ are continuous and thus bounded on the compact set $\supp (\Vb) \subset \holdAll$, the function $\grad f(\FbtV(\xb))^{\top} \Vb(\xb)$ is bounded for $(t,\xb) \in [0,T] \times \supp (\Vb)$ and thus for $(t,\xb) \in[0,T]\times\holdAll$, respectively.  Further, $\grad f(\FbV_{r+t}(\xb))$ converges pointwise to $\grad f(\FbV_r(\xb))$ as a composition of continuous functions, such that we can conclude by employing the dominated convergence theorem that
	\begin{align*}
		\lim_{t \rightarrow 0} \int_{\holdAll} \left| \left( \grad f(\FbV_r(\xb) + st\Vb(\xb)) - \grad f(\FbV_r(\xb))\right)^{\top} \Vb(\xb) \right| ~d\xb = 0 \quad \text{for all } s \in [0,1].
	\end{align*}
	Finally, utilizing dominated convergence again yields that
	\begin{align*}
		\lim_{t \rightarrow 0} \int_{0}^{1} \int_{\holdAll} \left| \left( \grad f(\FbV_r(\xb) + st\Vb(\xb)) - \grad f(\FbV_r(\xb))\right)^{\top} \Vb(\xb) \right| ~d\xb ~ds = 0.
	\end{align*}
\end{proof}~\\
In the two remaining proofs, we use the following observation:\\
Given a sequence $\{t_k\}_{k \in \Nbb} \in [0,T]^{\Nbb}$ with $\lim_{k \rightarrow \infty} t_k = 0$ and a domain $\widehat{\nlDom} \subset \Rd$, where ${d \in \Nbb}$. Moreover, assume two families of functions $\{g^t\}_{t \in [0,T]}$ and $\{h^{t_k}\}_{k \in \Nbb}$ with $g^t,h^{t_k} \in L^2(\widehat{\nlDom})$. Additionally, we suppose $g^t \rightarrow g^0$ in $L^2(\widehat{\nlDom})$ for $t \searrow 0$ and $h^{t_k} \rightharpoonup h^0$ in $L^2(\widehat{\nlDom})$ for $k \rightarrow \infty$. Then, it is easy to see that
\begin{align}\label{eq:weak_strong_conv}
	\lim_{k \rightarrow \infty, t \searrow 0} \int_{\widehat{\nlDom}} h^{t_k}g^t ~d\xb = \int_{\widehat{\nlDom}} h^0g^0 ~d\xb. 
\end{align}
Of course, if $h^{t_k} \rightarrow h^0$ in $L^2(\widehat{\nlDom})$, then \eqref{eq:weak_strong_conv} also holds. Further, if $h^{t_k} \rightarrow h^0$ in $L^\infty(\widehat{\nlDom})$ for  $h^{t_k} \in \L^{\infty}(\widehat{\nlDom})$, then
\begin{align}\label{eq:strong_linfty_conv}
	||g^th^{t_k}||_{L^2(\widehat{\nlDom})} \rightarrow ||g^0h^{0}||_{L^2(\widehat{\nlDom})} \text{ for } t \searrow 0. 
\end{align}
\begin{sloppypar}
In the following, \eqref{eq:strong_linfty_conv} will mainly be used to show that products involving $\xt \in L^{\infty}(\nlDom)$ still converge in $L^2(\nlDom)$. The $L^{\infty}(\nlDom)$-convergence of $\xt$ to $\xi^0=1$ is a direct consequence of ${\Vb \in \vecfieldsspecific}$ and the Leibniz rule for determinants.
\end{sloppypar}
\subsection{Proof of Lemma \ref{lemma:weak_conv}}
\label{app:weak_conv_proof}
\begin{proof}
	The proof for the first two assertions $\weakSol^t \rightharpoonup \weakSol^0$ and $\advar^t \rightharpoonup \advar^0$, for $t \searrow 0$, can be found in \cite{shape_paper}. We follow the same approach in order to show the statement for $\varphi^t$ and $\psi^t$. Therefore, we first prove the existence of a constant $C>0$ such that $||\varphi^t||_{L^2(\completeDom)}<C$ and $||\psi^t||_{L^2(\completeDom)}<C$ holds. In case of an integrable kernel that additionally fulfills Assumption (S1), we have some $C_1, C_2 > 0$ with
	\begin{align*}
		&||\varphi^t||^2_{L^2(\completeDom)} \leq C_1 |\varOp(t,\varphi^t,\varphi^t)| = C_1 |\varForceDer(t,\varphi^t) - \varOpDer(t,\weakSol^0,\varphi^t)| \\
		&\leq C_2\left(||f^t||_{H^1(\nlDom)} + ||\weakSol^0||_{L^2(\nlDom)}\right)||\varphi^t||_{L^2(\completeDom)},
	\end{align*}
	\begin{sloppypar}
	where we apply in the first step the norm equivalence as described in Remark \ref{remark:norm_equiv} and in the second that $\varphi$ fulfills the averaged adjoint equation \eqref{eq:AAE2_nonlocal}. Additionally, there exists a $\widehat{C}>0$ with $|\varOpDer(t,\weakSol^0,\varphi)|\leq \widehat{C}||\weakSol^0||_{L^2(\completeDom)}||\varphi^t||_{L^2(\completeDom)}$ (see Lemma \ref{lemma:varOpDer_continuity}) and ${|\varForceDer(t,\varphi^t)| \leq \widehat{C} || f^t||_{H^1(\nlDom)}||\varphi^t||_{L^2(\completeDom)}}$ due to the boundedness of $\xt$ and $\Vbt \in \vecfieldsspecific$ and the Cauchy-Schwarz inequality for $L^2$-functions, which we utilized for last inequality. Combined with $f^t \rightarrow f^0$ in $H^1(\nlDom)$ due to Lemma \ref{lemma:l2_convergence}, we get $||f^t||_{H^1(\nlDom)} < \widetilde{C}$ for some constant $\widetilde{C}>0$ and therefore we can conclude that $\{||\varphi^t||_{L^2(\completeDom)}\}_{t \in [0,T]}$ is bounded.\\
\end{sloppypar}
	If $\kernel_{\shape}$ is a singular symmetric kernel, we similarly derive for some $C,C_1,C_2,C_3 > 0$ that
	\begin{align*}
		|\varphi^t|_{H^s(\completeDom)}^2 \leq C_1|\varOp(t,\varphi^t,\varphi)| \leq C_2 (||f^t||_{H^1(\nlDom)} + |\weakSol^0|_{H^s(\completeDom)})|\varphi^t|_{H^s(\completeDom)} \leq C_3 |\varphi^t|_{H^s(\completeDom)} 
	\end{align*}
	where we used \cite[Lemma 4.3]{DuAnalysis}, i.e., there exists a constant $\widetilde{C}>0$ with $||\advar||_{L^2(\completeDom)} \leq \widetilde{C}|\advar|_{H^s(\completeDom)}$ for all $\advar \in \testSpace(\completeDom)$. As a result, we also draw the conclusion $||\varphi^t||_{L^2(\completeDom)} \leq \widetilde{C}|\varphi^t|_{H^s(\completeDom)} \leq C$ for an appropriate constant $C > 0$.\\
	The boundedness of $\{||\psi^t||_{L^2(\completeDom)}\}_{t\in [0,T]}$ can be proven analogously for both cases.\\
	Consequently, for every sequence $(t_n)_{n \in \Nbb} \in [0,T]^{\Nbb}$ with $\lim_{n \rightarrow \infty} t_n = 0$, there exists a subsequence $\{t_{n_k}\}_{k \in \Nbb}$ and functions $q_1,q_2 \in L^2(\completeDom)$ with $\varphi^{t_{n_k}} \rightharpoonup q_1$ and $\psi^{t_{n_k}} \rightharpoonup q_2$ for $k \rightarrow \infty$.\\
	For the proof of $\varOp(t,\varphi^{t_{n_k}},\widetilde{\advar}) \rightarrow \varOp(0,q_1,\widetilde{\advar})$ and $\varOp(t,\widetilde{\weakSol},\psi^{t_{n_k}}) \rightarrow \varOp(0,\widetilde{\weakSol}, q_2)$ for $k \rightarrow \infty$ we again refer to \cite{shape_paper}. Here, we show the remaining convergences by making use of similar arguments.\\
	Since $((\grad f^t)^{\top}\Vbt + \di \Vbt f^t)\xt \rightarrow \grad f_{\shape}^{\top}\Vb + \di \Vb f_{\shape}$ in $L^2(\nlDom)$ due to Lemma \ref{lemma:l2_convergence} and \eqref{eq:strong_linfty_conv}, we derive
	\begin{align*}
		\varForceDer(t_{n_k},\widetilde{\advar})=&\int_{\nlDom} \left((\grad f^{t_{n_k}})^{\top} \Vb^{t_{n_k}} \xi^{t_{n_k}} + \di \Vb^{t_{n_k}} f^{t_{n_k}} \xi^{t_{n_k}} \right) \widetilde{\advar} ~d\xb \\
		&\rightarrow \int_{\nlDom} \left(\grad f_{\shape}^{\top} \Vb + \di \Vb f_{\shape} \right) \widetilde{\advar} ~d\xb = \varForceDer(0,\widetilde{\advar}) \text{ for } k \rightarrow \infty.
	\end{align*}
	We now prove $\lim_{k \rightarrow \infty}\varOpDer(t_{n_k},\weakSol^0,\widetilde{\advar})=\varOpDer(0,\weakSol^0,\widetilde{\advar})$ separately for each class of kernels. For ease of presentation, we denote every subsequence of a sequence $(t_n)_{n \in \Nbb}$ by $(t_{n_k})_{k \in \Nbb}$ in the remaining part of this proof. However, technically they could be different subsequences or a subsequence of a subsequence.\\
	\textbf{Integrable kernels:}\\
	First, we recall that $\kernelt$ and $\grad \kernelt$ are essentially bounded and from Lemma \ref{lemma:l2_convergence} and observation \eqref{eq:strong_linfty_conv} follows
	\begin{align*}
		\Psi_1^{t}(\xb,\yb)\xt(\xb)\xt(\yb) = \grad \kernelt(\xb,\yb)^{\top}\Vbt(\xb,\yb) \xt(\xb)\xt(\yb) \rightarrow \grad \kernel^0(\xb,\yb)^{\top}\Vb^0(\xb,\yb)= \Psi_1^0(\xb,\yb)
	\end{align*}
	in ${L^2((\completeDom)^2)}$ for $t \searrow 0$. Then, for every sequence $(t_n)_{n\in\Nbb} [0,T]^{\Nbb}$ with $\lim_{n \rightarrow \infty}t_n = 0$ there exists a subsequence $(t_{n_k})_{k \in \Nbb}$ such that $\Psi_1^{t_{n_k}}\rightarrow \Psi_1^0$ almost everywhere on $(\completeDom)^2$. As a consequence of the dominated convergence theorem, we get
	\begin{align*}
		\frac{1}{2}\iint_{(\completeDom)^2} (\widetilde{\advar}(\xb) - \widetilde{\advar}(\yb))(&\weakSol^{0}(\xb)\Psi_1^{t_{n_k}}(\xb,\yb)  - \weakSol^{0}(\yb)\Psi_1^{t_{n_k}}(\yb,\xb))\xi^{t_{n_k}}(\xb)\xi^{t_{n_k}}(\yb) ~d\yb d\xb \\
		\rightarrow \frac{1}{2}\iint_{(\completeDom)^2} (\widetilde{\advar}(\xb) - &\widetilde{\advar}(\yb))(\weakSol^0(\xb) \Psi_1^0(\xb,\yb) - \weakSol^0(\yb) \Psi_1^0(\yb,\xb)) ~d\yb d\xb.
	\end{align*}
	Moreover, Lemma \ref{lemma:l2_convergence} and \eqref{eq:strong_linfty_conv} yield 
	\begin{align*}
		&\Psi_2^t(\xb,\yb)\xt(\xb) \xt(\yb)=\kernelt(\xb,\yb)(\di \Vbt(\xb) + \di \Vbt(\yb)) \xt(\xb) \xt(\yb) \\
		&\rightarrow \kernel^0(\xb,\yb)(\di \Vb^0(\xb) + \di \Vb^0(\yb))=\Psi_2^0(\xb,\yb) \text{ in } L^2((\completeDom)^2).
	\end{align*}
	By making use of the same argumentation as before, we conclude
	\begin{align*}
		\frac{1}{2}\iint_{(\completeDom)^2} (\widetilde{\advar}(\xb) - \widetilde{\advar}(\yb))&(\weakSol^0(\xb)\Psi_2^{t_{n_k}}(\xb,\yb) - \weakSol^0(\yb)\Psi_2^{t_{n_k}}(\yb,\xb)) \xi^{t_{n_k}}(\xb) \xi^{t_{n_k}}(\yb) ~d\yb d\xb\\
		\rightarrow \frac{1}{2}\iint_{(\completeDom)^2} (\widetilde{\advar}(\xb) - &\widetilde{\advar}(\yb))(\weakSol^0(\xb)\Psi_2^0(\xb,\yb) - \weakSol^0(\yb)\Psi_2^0(\yb,\xb)) ~d\yb d\xb.
	\end{align*}
	Consequently, we get $q_1 = \varphi^0$ because due to
	\begin{align}\label{eq:proof_q_1_varphi_0}
		\varOp(0,q_1, \widetilde{\advar}) = \lim_{k \rightarrow \infty} \varOp(t_{n_k}, \varphi^{t_{n_k}},\widetilde{\advar}) = \lim_{k \rightarrow \infty} \varForceDer(t_{n_k}, \widetilde{\advar}) - \varOpDer(t_{n_k},\weakSol^{t_{n_k}},\widetilde{\advar}) = \varForceDer(0, \widetilde{\advar}) - \varOpDer(0,\weakSol^0,\widetilde{\advar}),
	\end{align}
	the function $q_1$ fulfills the averaged adjoint equation \eqref{eq:AAE2} for $t=0$, where the solution is unique.
	Thus, we conclude $\varphi^t \rightharpoonup \varphi^0$ for $t \searrow 0$.\\
	\textbf{Singular symmetric kernels:}\\
	On every set $D_n$ the kernel $\kernel^t$ as well as the corresponding gradient $\grad \kernel^t$ are essentially bounded due to Assumption (S1) as explained in Remark \ref{remark:sing_kernel_bounded_Dn}. Consequently, it holds that
	\begin{align*}
		&\Psi_1^t(\xb,\yb)\xt(\xb)\xt(\yb)=(\grad \kernelt(\xb,\yb))^{\top}\Vbt(\xb,\yb) \xt(\xb)\xt(\yb) \in L^{\infty}(D_n) \text{ and}\\
		&\Psi_2^t(\xb,\yb)\xt(\xb) \xt(\yb)=\kernelt(\xb,\yb)(\di \Vbt(\xb) + \di \Vbt(\yb)) \xt(\xb) \xt(\yb) \in L^{\infty}(D_n).	
	\end{align*}
	With the same argumentation as in the case of integrable kernels, we derive for every sequence $(t_n)_{n \in \Nbb}$ the existence of a subsequence $(t_{n_k})_{k \in \Nbb}$ with 
	\begin{align*}
		\Psi_i^{t_{n_k}}(\xb,\yb) \xi^{t_{n_k}}(\xb) \xi^{t_{n_k}}(\yb)
		\rightarrow \Psi_i^0(\xb,\yb) \text{ in } L^2(D_n) \text{ for } i=1,2. 
	\end{align*}
	Then, by utilizing the dominated convergence theorem in the second step of the following calculation, we get
	\begin{align*}
		&\lim_{k\rightarrow \infty} \frac{1}{2}\iint_{(\completeDom)^2} (\weakSol^0(\xb) - \weakSol^0(\yb)) (\widetilde{\advar}(\xb) - \widetilde{\advar}(\yb))(\Psi_1^{t_{n_k}}(\xb,\yb) + \Psi_2^{t_{n_k}}(\xb,\yb)) \xi^{t_{n_k}}(\xb)\xi^{t_{n_k}}(\yb) ~d\yb d\xb\\
		&= \lim_{k,n \rightarrow \infty} \frac{1}{2}\iint_{D_n} (\weakSol^0(\xb) - \weakSol^0(\yb)) (\widetilde{\advar}(\xb) - \widetilde{\advar}(\yb))(\Psi_1^{t_{n_k}}(\xb,\yb) + \Psi_2^{t_{n_k}}(\xb,\yb)) \xi^{t_{n_k}}(\xb)\xi^{t_{n_k}}(\yb) ~d\yb d\xb \\
		&= \lim_{n \rightarrow \infty} \frac{1}{2}\iint_{D_n} (\weakSol^0(\xb) - \weakSol^0(\yb)) (\widetilde{\advar}(\xb) - \widetilde{\advar}(\yb))(\Psi_1^0(\xb,\yb) + \Psi_2^0(\xb,\yb)) ~d\yb d\xb \\
		&= \frac{1}{2}\iint_{(\completeDom)^2} (\weakSol^0(\xb) - \weakSol^0(\yb)) (\widetilde{\advar}(\xb) - \widetilde{\advar}(\yb))(\Psi_1^0(\xb,\yb) + \Psi_2^0(\xb,\yb)) ~d\yb d\xb.
	\end{align*}
	Thus, equation \eqref{eq:proof_q_1_varphi_0} also holds for singular symmetric kernels, which results in $q_1=\varphi^0$.\\~\\
	For both kernel cases, it can be shown quite analogously, that $q_2 = \psi^0$ and therefore $\psi^t \rightharpoonup \psi^0$ for $t \searrow 0$. However, for the convergence $\lim_{t \searrow 0}\varOpDer(t,\widetilde{\weakSol}, \advar^t) = \varOpDer(0, \widetilde{\weakSol}, \advar^0)$ we additionally have to make use of $\advar^t \rightharpoonup \advar^0$ and observation \eqref{eq:weak_strong_conv}.
\end{proof}
\subsection{Proof of Lemma \ref{lemma:D3_holds}}
\label{app:D3_proof}
\begin{proof}
	We show the statement by proving the convergence of each term
	\begin{align*}
		\lim_{k \rightarrow \infty,
			t \searrow 0} \partial_t &\reallagrangian(t, \weakSol^0, \advar^0, \psi^{s_{n_k}}, \varphi^{s_{n_k}}) \\
		=& \lim_{k \rightarrow \infty,
			t \searrow 0} \partial_t \objFunDer(t, \weakSol^0) - \partial_t \varForceDer(t, \advar^0) + \partial_t \varOpDer(t, \weakSol^0, \advar^0) \\
		&+ \partial_t \varOp(t, \weakSol^0, \psi^{s_{n_k}}) - \partial_t \varForce(t,\psi^{s_{n_k}}) + \partial_t\varOp(t, \varphi^{s_{n_k}}, \advar^0) - \partial_t\secondVarForce(t,\weakSol^0,\varphi^{s_{n_k}}).
	\end{align*}
	\begin{sloppypar}
	For the convergences 
	\begin{align*}
		\lim_{k \rightarrow \infty,
			t \searrow 0} \partial_t \varOp(t, \weakSol^0, \psi^{s_{n_k}}) = \partial_t \varOp(0, \weakSol^0, \psi^{0})\text{ and } \lim_{k \rightarrow \infty,
			t \searrow 0} \partial_t \varForce(t,\psi^{s_{n_k}}) = \partial_t \varForce(0,\psi^{0})
	\end{align*}
	we again refer to \cite{shape_paper}. Then, $\lim_{k \rightarrow \infty,
		t \searrow 0} \partial_t \varOp(t, \varphi^{s_{n_k}}, \advar^0) = \partial_t\varOp(0, \varphi^{0}, \advar^0)$ can be shown in a similar way. Moreover, Lemma \ref{lemma:l2_convergence} and \eqref{eq:strong_linfty_conv} yields $\xt \grad \data^t \rightarrow \grad \data$ in $L^2(\nlDom, \Rd)$ and ${\data^{t}\left. \frac{d}{dr} \right|_{r=t} \xi^r \rightarrow \data \di \Wb}$ in $L^2(\nlDom)$ for $t \searrow 0$. Furthermore, due to the boundedness of $\Wb$, we have $\xt(\grad \data^t)^{\top}\Wb \rightarrow \grad \data^{\top}\Wb$ in $L^2(\nlDom)$. Thus, by \eqref{eq:weak_strong_conv} we conclude
	\begin{align*}
		\lim_{k \rightarrow \infty,
			t \searrow 0} \partial_t\secondVarForce(t,\weakSol^0,\varphi^{s_{n_k}}) &= \lim_{k \rightarrow \infty,
			t \searrow 0}\int_{\nlDom} (\grad \data^t)^{\top} \Wb \varphi^{s_{n_k}} \xt - (\weakSol^0 - \data^t)\varphi^{s_{n_k}} \left.\frac{d}{dr} \right|_{r=t} \xi^r ~d\xb \\
		&= \int_{\nlDom} \grad \data^{\top} \Wb \varphi^{0} - (\weakSol^0 - \data)\varphi^{0} \di \Wb ~d\xb = \partial_t \secondVarForce(t,\weakSol^0,\varphi^0). 
	\end{align*}
\end{sloppypar}
	Additionally, since $\Vb \in \vecfieldsspecific$ and $\data \in H^2(\nlDom)$, we have $(\grad \data^t)^{\top}\Vbt \rightarrow (\grad \data)^{\top}\Vb$ in $L^2(\nlDom)$ and $\hess(\data^t)\Vbt \rightarrow \hess(\data)\Vb$ in $L^2(\nlDom,\R^2)$ as a result of Lemma \ref{lemma:l2_convergence}. Combined with the  boundedness of $\Wb \in \vecfieldsspecific$ we get $(\Vbt)^{\top} \hess(\data^t)\Wb \rightarrow \Vb^{\top} \hess(\data)\Wb$ in $L^2(\nlDom)$. Consequently, by applying \eqref{eq:weak_strong_conv} we have
	\begin{align*}
		&\lim_{k \rightarrow \infty,
			t \searrow 0} \partial_t \objFunDer(t, \weakSol^0) = \lim_{k \rightarrow \infty,
			t \searrow 0} \int_{\nlDom}  (\grad\data^t)^{\top}\Wb (\grad\data^t)^{\top}\Vbt \xt ~d\xb \\
		&- \int_{\nlDom} (\weakSol^0 - \data^t)\left((\Vbt)^{\top} \hess(\data)^t\Wb \xt + (\grad \data^t)^{\top}D\Vb^t\Wb\xt + (\grad\data^t)^{\top}\Vbt \left. \frac{d}{dr} \right|_{r=t}(\xi^r) \right) ~d\xb \\
		&+ \int_{\nlDom}-(\weakSol^0 - \data^t)(\grad\data^t)^{\top}\Wb \di \Vb^t \xt + \frac{1}{2} \left(\weakSol^0 - \data^t\right)^2  \left( (\grad \di \Vbt)^{\top}\Wb\xt + \di \Vbt \left. \frac{d}{dr} \right|_{r=t} \xi^r \right) ~d\xb\\
		&=\int_{\nlDom} \grad\data^{\top}\Wb \grad\data^{\top}\Vb ~d\xb \\
		&- \int_{\nlDom} (\weakSol^0 - \data)\left(\Vb^{\top} \hess(\data)\Wb + \grad \data^{\top}D\Vb\Wb + \grad\data^{\top}\Vb \di \Wb \right) ~d\xb \\
		&+ \int_{\nlDom}-(\weakSol^0 - \data)\grad\data^{\top}\Wb \di \Vb + \frac{1}{2} \left(\weakSol^0 - \data\right)^2 \left( (\grad \di \Vb)^{\top}\Wb + \di \Vb \di \Wb \right) ~d\xb
	\end{align*}
	Further, $f_{\shape} \in H^2(\nlDom)$ and with the same arguments as before, we derive
	\begin{align*}
		\lim_{k \rightarrow \infty,
			t \searrow 0}&\partial_t \varForceDer(t,\advar^0) \\
		=& \lim_{k \rightarrow \infty,
			t \searrow 0} \int_{\nlDom} (\Vbt)^{\top}\hess(f^t)\Wb \advar^0 \xt + (\grad f^t)^{\top} D\Vb^t\Wb\advar^0\xt + (\grad f^t)^{\top}\Vb^t\advar^0 \left. \frac{d}{dr} \right|_{r=t} (\xi^r) ~d\xb\\
		&+\int_{\nlDom} (\grad f^t)^{\top} \Wb \advar^0 \di \Vb^t \xt + f^t \advar^0 \left( \grad(\di \Vbt)^{\top}\Wb \xt + \di \Vbt \left. \frac{d}{dr} \right|_{r=t} \left(\xi^r \right) \right) ~d\xb\\
		=&\int_{\nlDom} \Vb^{\top}\hess(f_{\shape})\Wb \advar^0 + \grad f_{\shape}^{\top} D\Vb\Wb\advar^0 + \grad f_{\shape}^{\top}\Vb\advar^0 \di \Wb + \grad f_{\shape}^{\top} \Wb \advar^0 \di \Vb ~d\xb\\
		&+\int_{\nlDom} f_{\shape} \advar^0 \left((\grad \di \Vb)^{\top}\Wb + \di \Vb \di \Wb \right) ~d\xb
	\end{align*}
	\textbf{Integrable kernels:}\\
	First, we denote that the term
	\begin{align*}
		\grad \Psi_1^t(\xb,\yb)^{\top}\Wb(\xb,\yb)\xt(\xb)\xt(\yb) = &\Vbt(\xb,\yb)^{\top}\hess(\kernelt)(\xb,\yb)\Wb(\xb,\yb)\xt(\xb)\xt(\yb) \\ &+\grad\kernelt(\xb,\yb)^{\top}D\Vbt(\xb,\yb)\Wb(\xb,\yb)\xt(\xb)\xt(\yb) 
	\end{align*}
	is essentially bounded on $(\completeDom)^2$. Moreover, observation \eqref{eq:strong_linfty_conv} and Lemma \ref{lemma:l2_convergence} yields
	\begin{align*}
		&\grad \Psi_1^t(\xb,\yb)\xt(\xb)\xt(\yb) \rightarrow \grad \Psi_1^0(\xb,\yb) \text{ in } L^2((\completeDom)^2,\Rd) \text{ and }\\ &\Psi_1^t(\xb,\yb)\xt(\xb)\xt(\yb)=\grad\kernelt(\xb,\yb)^{\top}\Vbt(\xb,\yb)\xt(\xb)\xt(\yb) \rightarrow \grad\kernel^0(\xb,\yb)^{\top}\Vb^0(\xb,\yb) =\Psi_1^0(\xb,\yb)
	\end{align*}
	in $L^2((\completeDom)^2,\R)$. Thus, for every sequence $\{t_n\}_{n \in \Nbb}$ there exists a subsequence $\{t_{n_k}\}_{k \in \Nbb}$ such that $\Psi_1^{t_{n_k}}(\xb,\yb)\xi^{t_{n_k}}(\xb)\xi^{t_{n_k}}(\yb) \rightarrow \Psi_1^0(\xb,\yb)$ and $\grad \Psi_1^{t_{n_k}}(\xb,\yb)\xi^{t_{n_k}}(\xb)\xi^{t_{n_k}}(\yb) \rightarrow \grad \Psi_1^0(\xb,\yb)$ pointwise almost everywhere. By making use of the dominated convergence theorem we derive
	\begin{align*}
		\iint_{(\completeDom)^2} (\advar^0(\xb) - \advar^0(\yb))(&\weakSol^0(\xb)\grad \Psi_1^{t_{n_k}}(\xb,\yb)^{\top}\Wb(\xb,\yb) \\
		&- \weakSol^0(\yb)\grad \Psi_1^{t_{n_k}}(\yb,\xb)^{\top}\Wb(\yb,\xb))\xi^{t_{n_k}}(\xb)\xi^{t_{n_k}}(\yb) ~d\yb d\xb \\
		\rightarrow \iint_{(\completeDom)^2} (\advar^0(\xb) - \advar^0(\yb))(&\weakSol^0(\xb)\grad \Psi_1^0(\xb,\yb)^{\top}\Wb(\xb,\yb) \\
		&- \weakSol^0(\yb)\grad \Psi_1^0(\yb,\xb)\Wb(\yb,\xb)) ~d\yb d\xb \text{ for } k \rightarrow \infty.
	\end{align*}
	Moreover, since $\xt$ is continuously differentiable, the derivative $\left. \frac{d}{dr} \right|_{r=t} \xi^r$ is also essentially bounded and $\di \Vbt \rightarrow \di \Vb$ pointwise as a composition of continuous functions. Combined with arguments from above the following integrands are essentially bounded by a constant independent of $t \in [0,T]$ and again for every sequence $\{t_n\}_{n \in \Nbb}$ there exists a subsequence $\{t_{n_k}\}_{k \in \Nbb}$ such that the integrands converge pointwise almost everywhere. Thus, dominated convergence yields 
	\begin{align*}
		\iint\limits_{(\completeDom)^2} (\advar^0(\xb) - \advar^0(\yb))(&\weakSol^0(\xb) \Psi_1^{t_{n_k}}(\xb,\yb)- \weakSol^0(\yb)\Psi_1^{t_{n_k}}(\yb,\xb))\left. \frac{d}{dr} \right|_{r=t_{n_k}} (\xi^r(\xb) \xi^r(\yb)) ~d\yb d\xb\\
		\rightarrow \iint\limits_{(\completeDom)^2} (\advar^0(\xb) - \advar^0(\yb))(&\weakSol^0(\xb) \Psi_1^0(\xb,\yb) -\weakSol^0(\yb)\Psi_1^0(\yb,\xb))(\di \Wb(\xb) + \di \Wb(\yb)) ~d\yb d\xb
	\end{align*}
	and we also conclude
	\begin{align*}
		\iint\limits_{(\completeDom)^2} (\advar^0(\xb) - \advar^0(\yb))(&\weakSol^0(\xb)\grad \Psi_2^{t_{n_k}}(\xb,\yb)^{\top}\Wb(\xb,\yb) \\
		&- \weakSol^0(\yb) \grad \Psi_2^{t_{n_k}}(\yb,\xb)^{\top}\Wb(\yb,\xb))
		\xi^{t_{n_k}}(\xb)\xi^{t_{n_k}}(\yb) ~d\yb d\xb\\
		\rightarrow \iint\limits_{(\completeDom)^2} (\advar^0(\xb) - \advar^0(\yb))(&\weakSol^0(\xb)\grad \Psi_2^0(\xb,\yb)^{\top}\Wb(\xb,\yb) - \weakSol^0(\yb) \grad \Psi_2^0(\yb,\xb)^{\top}\Wb(\yb,\xb)) ~d\yb d\xb.
	\end{align*}
	For the last term, we derive by applying the dominated convergence theorem
	\begin{align*}
		&\iint\limits_{(\completeDom)^2} (\advar^0(\xb) - \advar^0(\yb))(\weakSol^0(\xb)\Psi_2^{t_{n_k}}(\xb,\yb) - \weakSol^0(\yb)\Psi_2^{t_{n_k}}(\yb,\xb))\left. \frac{d}{dr} \right|_{r=t_{n_k}}(\xi^r(\xb)\xi^r(\yb)) ~d\yb d\xb\\
		&\rightarrow\iint\limits_{(\completeDom)^2} (\advar^0(\xb) - \advar^0(\yb))(\weakSol^0(\xb)\Psi_2^0(\xb,\yb) - \weakSol^0(\yb)\Psi_2^0(\yb,\xb))(\di \Wb(\xb) + \di \Wb(\yb)) ~d\yb d\xb
	\end{align*}
	As a result, we derive $\partial_t \varOpDer(t_{n_k},\weakSol^0,\advar^0) \rightarrow \partial_t \varOpDer(0,\weakSol^0,\advar^0)$ for $k \rightarrow \infty$ and 
	\begin{align*}
	\partial_t \varOpDer(t,\weakSol^0,\advar^0) \rightarrow \partial_t \varOpDer(0,\weakSol^0,\advar^0) \text{ for } t \searrow 0.
	\end{align*}
\newpage
	\textbf{Singular symmetric kernels:}\\
	On the sets $D_n$ the functions 
	\begin{align*}
		a_1(t,\xb,\yb) &\defas \grad \Psi_1^t(\xb,\yb)^{\top}\Wb(\xb,\yb)\xt(\xb)\xt(\yb) \\
		&= \left(\Vbt(\xb,\yb)^{\top}\hess(\kernelt)(\xb,\yb) + \grad \kernelt(\xb,\yb)^{\top}D\Vbt(\xb,\yb)\right)\Wb(\xb,\yb)\xt(\xb)\xt(\yb),\\ 
		a_2(t,\xb,\yb) &\defas \Psi_1^t(\xb,\yb)\left. \frac{d}{dr} \right|_{r=t} \left( \xt(\xb) \xt(\yb)\right)  = \grad \kernelt(\xb,\yb)^{\top}\Vbt(\xb,\yb) \left. \frac{d}{dr} \right|_{r=t} \left( \xt(\xb) \xt(\yb)\right) \\
		a_3(t,\xb,\yb) &\defas \grad \Psi_2^t(\xb,\yb)^{\top}\Wb(\xb,\yb)\xt(\xb)\xt(\yb) \\
		&= (\di \Vbt(\xb) + \di \Vbt(\yb))\grad \kernelt(\xb,\yb)^{\top} \Wb(\xb,\yb)\xt(\xb)\xt(\yb) \\
		&~~~+ \kernelt(\xb,\yb) (\grad \di \Vbt(\xb) + \grad \di \Vbt(\yb))^{\top} \Wb(\xb,\yb)\xt(\xb)\xt(\yb) \text{ and}\\
		a_4(t,\xb,\yb) &\defas \Psi_2^t(\xb,\yb)\left. \frac{d}{dr} \right|_{r=t} \left( \xi^r(\xb) \xi^r(\yb) \right)  = \kernelt(\xb,\yb)(\di \Vbt(\xb) + \di \Vbt(\yb)) \left. \frac{d}{dr} \right|_{r=t} \left( \xi^r(\xb) \xi^r(\yb) \right)
	\end{align*}
	are essentially bounded (see Remark \ref{remark:sing_kernel_bounded_Dn} and Assumption (S1)). With the same argumentation as in the case of integrable kernels, for every sequence $(t_n)_{n \in \Nbb}$ there exists a subsequence $(t_{n_k})_{k \in \Nbb}$ such that $a_i(t_{n_k},\xb,\yb) \rightarrow a_i(0,\xb,\yb)$ pointwise almost everywhere on $D_n$ for $i=1,...,4$. Thus, we derive
	\begin{align*}
		&\sum_{i=1}^4 \lim_{k \rightarrow \infty} \iint_{(\completeDom)^2} (\advar^0(\xb) - \advar^0(\yb))(\weakSol^0(\xb) - \weakSol^0(\yb)) a_i(t_{n_k},\xb,\yb) ~d\yb d\xb \\
		= &\sum_{i=1}^4 \lim_{k,n \rightarrow \infty} \iint_{D_n} (\advar^0(\xb) - \advar^0(\yb))(\weakSol^0(\xb) - \weakSol^0(\yb)) a_i(t_{n_k},\xb,\yb) ~d\yb d\xb \\
		= &\sum_{i=1}^4 \lim_{n \rightarrow \infty} \iint_{D_n} (\advar^0(\xb) - \advar^0(\yb))(\weakSol^0(\xb) - \weakSol^0(\yb)) a_i(0,\xb,\yb) ~d\yb d\xb \\
		= &\sum_{i=1}^4 \iint_{(\completeDom)^2} (\advar^0(\xb) - \advar^0(\yb))(\weakSol^0(\xb) - \weakSol^0(\yb)) a_i(0,\xb,\yb) ~d\yb d\xb.
	\end{align*}
	Consequently, we get $\partial_t \varOpDer(t_{n_k},\weakSol^0,\advar^0) \rightarrow \partial_t \varOpDer(0,\weakSol^0,\advar^0)$ for $k \rightarrow \infty$ and thus
	 \begin{align*}
	 	\partial_t \varOpDer(t,\weakSol^0,\advar^0) \rightarrow \partial_t \varOpDer(0,\weakSol^0,\advar^0) \text{ for } t \searrow 0.
	 \end{align*}
\end{proof}
\end{document}